\pgfplotsset{compat=1.17}
\newtheorem{theorem}{Theorem}[section]
\newtheorem{definition}[theorem]{Definition}
\newtheorem{corollary}[theorem]{Corollary}
\newtheorem{lemma}[theorem]{Lemma}
\newtheorem{remark}[theorem]{Remark}
\newtheorem{example}[theorem]{Example}
\newcommand{\rank}{\mbox{\rm rank}\, }
\newcommand{\diag}{\mbox{\rm diag}\, }
\newenvironment{breakablealgorithm}
{% \begin{breakablealgorithm}
		\begin{center}
			\refstepcounter{algorithm}% New algorithm
			\hrule height.8pt depth0pt \kern2pt% \@fs@pre for \@fs@ruled
			\renewcommand{\caption}[2][\relax]{% Make a new \caption
				{\raggedright\textbf{\textbf{ Algorithm}~\thealgorithm} ##2\par}%
				\ifx\relax##1\relax % #1 is \relax
				\addcontentsline{loa}{algorithm}{\protect\numberline{\thealgorithm}##2}%
				\else % #1 is not \relax
				\addcontentsline{loa}{algorithm}{\protect\numberline{\thealgorithm}##1}%
				\fi
				\kern2pt\hrule\kern2pt
			}
		}{% \end{breakablealgorithm}
		\kern2pt\hrule\relax %\@fs@post% for \@fs@ruled
	\end{center}
}
\newenvironment{breakablealgorithm1}
{% \begin{breakablealgorithm1}
		\begin{center}
			\refstepcounter{algorithm}% New algorithm
			\hrule height.8pt depth0pt \kern2pt% \@fs@pre for \@fs@ruled
			\renewcommand{\caption}[2][\relax]{% Make a new \caption
				{\raggedright\textbf{\textbf{ Subalgorithm}~\thealgorithm} ##2\par}%
				\ifx\relax##1\relax % #1 is \relax
				\addcontentsline{loa}{algorithm}{\protect\numberline{\thealgorithm}##2}%
				\else % #1 is not \relax
				\addcontentsline{loa}{algorithm}{\protect\numberline{\thealgorithm}##1}%
				\fi
				\kern2pt\hrule\kern2pt
			}
		}{% \end{breakablealgorithm1}
		\kern2pt\hrule\relax %\@fs@post% for \@fs@ruled
	\end{center}
}
\newlength{\bibitemsep}\setlength{\bibitemsep}{.2\baselineskip plus .05\baselineskip minus .05\baselineskip}
\newlength{\bibparskip}\setlength{\bibparskip}{0pt}
\let\oldthebibliography\thebibliography
\renewcommand\thebibliography[1]{%
	\oldthebibliography{#1}%
	\setlength{\parskip}{\bibitemsep}%
	\setlength{\itemsep}{\bibparskip}%
}
\title{New results on the local-nonglobal minimizers of the generalized trust-region subproblem \thanks{This work was supported by the National Natural Science Foundation of China (Grants Nos. 12171052, 12171051). This work was also supported by Beijing Natural Science Foundation (Grants No. Z220004).}}
\author{Wenbao Ai\footnotemark[2] , Mengxiao Zhang\footnotemark[2] , Jianhua Yuan\thanks{with School of Science, Beijing University of Posts and Telecommunications, and also with Key Laboratory of Mathematics and Information Networks, Ministry of Education, Beijing 100876, China 
({aiwb@bupt.edu.cn, zhangmengxiao@bupt.edu.cn, jianhuayuan@bupt.edu.cn}). Corresponding author's Email: aiwb@bupt.edu.cn}}
\date{}
\begin{document}
\maketitle

\begin{abstract}
	In this paper, we study the local-nonglobal minimizers of the Generalized Trust-Region subproblem $(GTR)$ and its Equality-constrained version $(GTRE)$. Firstly, the equivalence is established between the local-nonglobal minimizers of both $(GTR)$ and $(GTRE)$ under assumption of the joint definiteness. By the way, a counterexample is presented to disprove a conjecture of Song-Wang-Liu-Xia [SIAM J. Optim., 33(2023), pp.267-293]. Secondly, if the Hessian matrix pair is jointly positive definite, it is proved that each of $(GTR)$ and $(GTRE)$ has at most two local-nonglobal minimizers. This result first confirms the correctness of another conjecture of Song-Wang-Liu-Xia [SIAM J. Optim., 33(2023), pp.267-293]. Thirdly, if the Hessian matrix pair is jointly negative definite, it is verified that each of $(GTR)$ and $(GTRE)$ has at most one local-nonglobal minimizer. In special, if the constraint is reduced to be  a ball or a sphere, the above result is just the classical Mart\'{i}nez's. Finally, an algorithm is proposed to find all the local-nonglobal minimizers of $(GTR)$ and $(GTRE)$ or confirm their nonexistence in a tolerance. Preliminary numerical results demonstrate the effectiveness of the algorithm.
\end{abstract}

{\bf Keywords} 
Quadratically constrained quadratic programming, Generalized trust-region subproblem,  Local-nonglobal minimizers, Joint definiteness of two matrices

{\bf Mathematics Subject Classiﬁcation }
90C20, 90C26, 90C32, 90C46

\section{Introduction}
Let us consider the following Generalized Trust-Region (GTR) subproblem
\begin{equation} \label{eq:inequalityproblem}
	\begin{array}{lcl}
		(GTR)&\underset{x\in\mathcal{R}^n}{\min} &f_0(x)=x^TA_0x+2b_0^Tx\\
		&\mbox{s.t.} &f_1(x)=x^TA_1x+2b_1^Tx+c_1\leq0,
	\end{array}
\end{equation}
and its Equality-constrained (GTRE) version:
\begin{equation} \label{eq:equalityproblem}
	\begin{array}{lcl}
		(GTRE)&\underset{x\in\mathcal{R}^n}{\min}   &f_0(x)=x^TA_0x+2b_0^Tx\\
		&\mbox{s.t.} &f_1(x)=x^TA_1x+2b_1^Tx+c_1=0,
	\end{array}
\end{equation}
where $A_0$, $A_1 \in \mathcal{S}^{n}$, $b_{0}$, $b_{1}\in\mathcal{R}^n$ and $c_{1} \in \mathcal{R}$. According to S-lemmas about inequality \cite{polik2007, yakubovich1971} and equality \cite{xia2016}, the problems $(GTR)$ and $(GTRE)$ have strong duality under mild conditions. So one can easily find their global minimizers by solving their Lagrangian dual problems.  In this paper we shall deeply study their local-nonglobal minimizers, because they play an important role in globally solving  nonconvex quadratically multi-constrained quadratic programming as follows:
\begin{equation}\label{eq:GTRadd1}
	\underset{x\in\mathcal{R}^n}{\min}\{f_0(x)\,\,|\,\,f_1(x)\leq0,\,f_2(x)\leq0,\cdots,f_m(x)\leq0\},
\end{equation}
where $m\geq2$ and  each $f_i$ is a quadratic (or linear) function of $x$ for $i=2,3,\cdots,m$. Specifically, all the local-nonglobal minimizers of \eqref{eq:inequalityproblem} that satisfy the constraints $f_i(x)\leq0$ for all $i\in\{2,3,\cdots,m\}$ are also local minimizers of \eqref{eq:GTRadd1}, and one of them may possibly be a global minimizer of \eqref{eq:GTRadd1}  when the hard case occurs that the Hessian of the Lagrangian of \eqref{eq:GTRadd1} has at least one negative eigenvalue at all Karush–Kuhn–Tucker points.
It should be indicated that,  when $m=2$ and $f_2(x)$ is a strictly convex quadratic function, the model \eqref{eq:GTRadd1} is just the famous generalized Celis-Dennis-Tapia subproblem (some newest advances involving the CDT subproblem may be found in \cite{bienstock2016, consolini2023, yuan-ai2017, yuan2015}).  
And when each $f_i$ is a linear function of $x$ ($i=2,3,\cdots,m$), the model \eqref{eq:GTRadd1} is referred to as generalization of the extended trust-region subproblem \cite{jiang2022}, which has applications in financial engineering \cite{luo2024}.
	
%which was extensively analyzed by researchers \cite{ai2009, beck2006, bienstock2016, bomze2015, burer2013, chen1999, chen2009,  grigoriev2005, jeyakumar2009, jeyakumar2014, peng1997, yang2016, ye2003, yuan1990, ai2017}.

%%%%In comparison with the global minimizers of %$(GTR)$ and $(GTRE)$, research on their local-nonglobal minimizers seems to encounter added difficulties. And
	
If $A_1\succ0$, $(GTR)$ and $(GTRE)$ are substantially the trust-region subproblem and its equality-constrained version, respectively. Characteristics and computational methods of their local-nonglobal minimizers have been deeply studied \cite{fortin2005, lucidi1998, martinez1994, xia2022, xia2020}. The first significant result was contributed by Mart\'{i}nez \cite{martinez1994} in 1994, in which it was proved that the trust-region subproblem and its equality-constrained version have at most one local-nonglobal minimizer. In 1998,  Lucidi, Palagi and Roma \cite{lucidi1998} indicated further that, for the trust-region subproblem, the corresponding multiplier to the local-nonglobal minimizer must be positive. In 2005, two new algorithms computing the local-nonglobal minimizer of the trust-region subproblem were proposed by Fortin \cite{fortin2005}. In 2020, a necessary and sufficient optimality condition was presented by Wang and Xia \cite{xia2020}, which eliminates the gap between the necessary and sufficient conditions for the local-nonglobal minimizer of the trust-region subproblem. Recently (in 2022), Wang, Song and Xia \cite{xia2022} showed that the local-nonglobal minimizer of the trust-region subproblem has the second smallest objective function value among all KKT points. 

The results on the local-nonglobal minimizer of the trust-region subproblem and its equality-constrained version have generated wide applications in theoretical analysis and algorithm design for some interesting special cases of \eqref{eq:GTRadd1} (see \cite{beck2017, bienstock2014, bomze2015, deng2020, xia2013, rontsis2022, salahi2017}). For example, Beck and Bienstock et al. \cite{beck2017, bienstock2014} considered the following problem
\begin{equation} \label{eq:s1-robust-model}
\underset{x\in\mathcal{R}^n}{\min}\{f_0(x)\,\,|\,\,\|x-a_i\|^2\leq d_i^2, i\in{I_1};\, 
\|x-a_i\|^2\geq d_i^2, i\in{I_2};\,
a_i^Tx\leq d_i, i\in{I_3}\},
\end{equation}
where $a_i\in\mathcal{R}^n$ and $d_i\in\mathcal{R}$ for each index $i$. Based on the result of Mart\'{i}nez \cite{martinez1994}, Bienstock and Michlka \cite{bienstock2014} proved that,  for each fixed pair $|I_1|$ and $|I_2|$, \eqref{eq:s1-robust-model} can be solvable in polynomial time by enumerating all the candidates for its global minimizers under certain mild conditions. Moreover, Beck and Pan \cite{beck2017} proposed a branch and bound algorithm for \eqref{eq:s1-robust-model}, in which one needs to find all the global and nonglobal minimizers of $f_0(x)$ on a sphere or a low-dimensional sphere. Recently, Rontsis et al. \cite{rontsis2022} presented an active-set algorithm for a special case of \eqref{eq:s1-robust-model}, which needs also to compute all the global and nonglobal minimizers of $f_0(x)$ on a sphere or a low-dimensional sphere.

%The problems $(GTR)$ and $(GTRE)$ have wide applications. When $A_{1}$ is positive definite, $(GTR)$ is essentially a trust-region subproblem, which plays a crucial role in the trust-region method and so has been widely researched. Some recent results about globally solving the trust-region subproblem may be found in \cite{adachi2017, beck2018, hazan2016, yuan2015}. When $A_{1}$ is a general symmetric matrix, back in 1971 the strong duality of $(GTR)$ was actually confirmed by the well-known S-lemma of Yakubovich \cite{polik2007, yakubovich1971} under the Slater's condition. Moreover, in 1993 Mor\'{e} \cite{more1993} proved that $(GTRE)$ also has the strong duality under two assumptions that $f_1$ is indefinite (i.e. $\inf\{f_1(x)\,|\,x\in\mathcal{R}^n\}<0<\sup\{f_1(x)\,|\,x\in\mathcal{R}^n\}$) and $A_1\ne0$. In 2016, the second assumption ``$A_1\ne0$" was improved by Xia, Wang and Sheu \cite{xia2016} into an exact condition as follows: the two statements that ``$A_1=0$" and ``$A_0$ has exactly one negative eigenvalue" do not hold simultaneously. Some approaches about solving globally $(GTR)$ and $(GTRE)$ were developed under the assumption that $A_{0}$ and $A_{1}$ can be simultaneously diagonalizable  or even have a joint positive definiteness \cite{adachi2019, ben-tal2014, jiang2019, jiang2020, jiang2018, wang2022, wang2023}.
%%\begin{redtext}

If $A_1\not\succ0$,  not much progress has only been made recently in the local-nonglobal minimizers of $(GTR)$ and  $(GTRE)$.  In 2020, Taati and Salahi \cite{taati2020} tried to characterize the local-nonglobal minimizers of $(GTR)$ under assumption of the joint positive definiteness, and proved that the number of the local-nonglobal minimizers of $(GTR)$ is no more than $2n+1$. In 2021 and 2023, Xia's team \cite{xia2023, xia2021} further proved that, if $(GTR)$ (or $(GTRE)$) satisfies three assumptions that ``$A_1\ne0$", ``the Slater's condition (or the two-side Slater's condition) holds" and ``the joint definiteness holds", then the local-nonglobal minimizers must also satisfy the sufficient optimality conditions and their number is no more than $\min\{\rank(A_1)+1, n\}$.  Particularly, the homogeneous $(GTR)$ and $(GTRE)$ have no local-nonglobal minimizers if only ``$c_1\ne0$".  Moreover, Xia's team conjectured in \cite[Conjecture 5.6]{xia2023} that $(GTR)$ has at most two local-nonglobal minimizers under two assumptions that ``the Slater's condition holds" and `` $A_0+\mu_1 A_1\succ0$ for some $\mu_1\geq0$". 

In this paper we focus on the local-nonglobal minimizers of $(GTR)$ and $(GTRE)$, and try to address some challenging issues, such as their number and how to efficiently compute them. The main contributions of this paper can be summarized as follows:
\begin{itemize}
	\item   The equivalence  is established between the local-nonglobal minimizers of $(GTR)$ and $(GTRE)$ if only $n\geq2$ and $\mu_0A_0+\mu_1 A_1\succ0$ for some $\mu_0,\mu_1\in\mathcal{R}$ (which we call the joint definiteness). 
	
	\item  It is proved that, if $A_0+\mu_1 A_1\succ0$ for some $\mu_1\in\mathcal{R}$ (which we call the joint positive definiteness), $(GTRE)$ (or $(GTR)$) has at most two local-nonglobal minimizers. So Conjecture 5.6 of \cite{xia2023} is confirmed to be true. 
	
	\item It is proved that, if $A_0+\mu_1 A_1\prec0$ for some $\mu_1\in\mathcal{R}$ (which we call the joint negative definiteness),  $(GTRE)$ (or $(GTR)$) has at most one local-nonglobal minimizer. In special, if $A_1$ is an identity matrix, the above result is just the classical Mart\'{i}nez's \cite{martinez1994}.
	
	\item An efficient algorithm is proposed to find all local-nonglobal minimizers of $(GTRE)$ (or $(GTR)$) or confirm their nonexistence in a tolerance.
	
	\item It is demonstrated by a counterexample that Conjecture 5.10 of \cite{xia2023} is not true (see Remark \ref{rmk:s2-cexmpl-2}).
\end{itemize}
We believe that our new results will help researchers to deal with the hard case of the model \eqref{eq:s1-robust-model}.

This paper is organized as follows. In Section \ref{sec-properties}, we study basic properties about the local-nonglobal minimizers of $(GTR)$ and $(GTRE)$.  In special, a counterexample is presented to show that Conjecture 5.10 of \cite{xia2023} is not true. In Section \ref{sec-number}, the theory involving the number of local-nonglobal minimizers is perfectly established. In Section \ref{sec-computation}, a bisection-based algorithm is proposed to find out all local-nonglobal minimizers, and preliminary numerical results are showed. 

\textbf{Notation:} Let $S^{n}$ denote the set of all the $n\times n$ real symmetric matrices. For any matrix $A\in\mathcal{S}^n$,  $A\succeq 0$ (or $\succ 0$) denotes that $A$ is positive semidefinite (or positive definite), and $\mbox{det}(A)$ denotes the determinant of $A$. For any matrix $B\in\mathcal{R}^{m\times n}$, $\rank(B)$, $\mbox{Null}(B)$ and  $\mbox{Range}(B)$ denote the rank, the null subspace and the range subspace of $B$, respectively. Moreover, $I$ denotes  the identity matrix of order $n$. Finally, for a smooth function $f:\mathcal{R}\rightarrow\mathcal{R}$, $f'$ denotes the first derivatives of $f$. 

\section{Basic properties of the local-nonglobal minimizers}
\label{sec-properties}

In this section, we shall study several basic properties that involve the local-nonglobal minimizers of the problems $(GTRE)$ and $(GTR)$. Some of them are well known but improved by us in their assumptions or proofs. And some of them are first presented by us.  These properties lay a strong foundation for our further discussions in the next section.

Firstly we state the classical optimality conditions for local minimizers of $(GTRE)$ and $(GTR)$, respectively.

\begin{lemma}[{\cite[Chapter 4]{bazaraa2006}}] \label{th:OptCond}
	{\bf (1) (The necessary optimality conditions for $(GTRE)$) } If $x^{*}$ is a local minimizer of $(GTRE)$ and $\nabla f_{1}(x^{*})\ne0$, then $f_{1}(x^{*})=0$ and there exists a Lagrangian multiplier $\mu^{*} \in \mathcal{R}$ such that
	\begin{subequations} 
		\begin{align}
			\label{eq:1NecCond}
			&\nabla f_{0}(x^{*})+\mu^{*}\nabla f_{1}(x^{*})=0,\\
			\label{eq:2NecCond}
			&v^{T}(A_{0}+\mu^{*}A_{1})v \geq 0,\ \forall\ v\in\mathcal{R}^{n}\,\mbox{ satisfying }\,\nabla f_{1}(x^{*})^{T}v=0.
		\end{align}
	\end{subequations}
	
	\noindent{\bf (2) (The sufficient optimality conditions for $(GTRE)$) } If $f_{1}(x^{*})$ $=0$ and there exists a Lagrangian multiplier $\mu^{*}\in\mathcal{R}$ satisfying \eqref{eq:1NecCond} and
	\begin{equation}\label{eq:2SufCond}
		v^{T}(A_{0}+\mu^{*}A_{1})v>0,\ \forall\ 0\neq v\in\mathcal{R}^{n} \mbox{ satisfying } \nabla f_{1}(x^{*})^{T}v=0,
	\end{equation}
	then $x^{*}$ is a strict local minimizer of $(GTRE)$.
	
	\noindent{\bf (3) (The necessary  optimality conditions for $(GTR)$) } If $x^{*}$ is a local minimizer of $(GTR)$ and $\nabla f_{1}(x^{*})\ne0$, then $f_{1}(x^{*})\leq0$ and there exists a Lagrangian multiplier $\mu^{*}\geq0$ such that \eqref{eq:1NecCond} and the complementary condition
	\begin{equation}\label{eq:2Complemen}
		\mu^{*}f_1(x^*)=0
	\end{equation}
	hold. Moreover,  if  $\mu^{*}=0$, then $x^{*}$ is a global minimizer of $(GTR)$; and if $\mu^{*}>0$ then \eqref{eq:2NecCond} holds. 
	
	\noindent{\bf (4) (The sufficient optimality conditions for $(GTR)$) }  If $f_{1}(x^{*})=0$ and there exists a Lagrangian multiplier $\mu^{*}>0$ satisfying \eqref{eq:1NecCond} and \eqref{eq:2SufCond}, then $x^{*}$ is a strict local minimizer of $(GTR)$. 
\end{lemma}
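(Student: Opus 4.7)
The plan is to derive all four parts uniformly from the Lagrangian $L(x,\mu) = f_0(x) + \mu f_1(x)$, exploiting the hypothesis $\nabla f_1(x^*) \ne 0$ (which supplies LICQ at $x^*$) and the fact that $f_0$ and $f_1$ are exactly quadratic, so Taylor expansions around $x^*$ terminate at second order.

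For part (1), LICQ immediately yields the first-order necessary condition and the existence of a unique multiplier $\mu^*$ satisfying \eqref{eq:1NecCond}. For the second-order condition, I would take any tangent direction $v$ with $\nabla f_1(x^*)^T v = 0$ and apply the implicit function theorem at the regular point $x^*$ to build a smooth feasible curve $x(t)$ with $x(0) = x^*$, $\dot{x}(0) = v$, and $f_1(x(t)) \equiv 0$ for small $|t|$. Local optimality of $x^*$ forces $\left.\tfrac{d^2}{dt^2} f_0(x(t))\right|_{t=0} \ge 0$; because $\nabla L(x^*,\mu^*) = 0$, the contribution of $\ddot{x}(0)$ drops out and what remains is exactly $v^T(A_0 + \mu^* A_1)v \ge 0$.

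For part (2), I would argue by contradiction: if $x^*$ were not a strict local minimizer, there would exist feasible points $x_k = x^* + t_k v_k$ with $t_k \downarrow 0$, $\|v_k\| = 1$, and $f_0(x_k) \le f_0(x^*)$. Passing to a subsequence with $v_k \to v$, $\|v\| = 1$, the exact quadratic expansion of $f_1$ together with $f_1(x_k) = 0$ forces $\nabla f_1(x^*)^T v = 0$. Because $f_0$ and $f_1$ are exactly quadratic and $\nabla L(x^*,\mu^*) = 0$ by \eqref{eq:1NecCond}, the identity
\[
L(x_k,\mu^*) - L(x^*,\mu^*) = \tfrac{t_k^{\,2}}{2}\, v_k^T(A_0 + \mu^* A_1)\, v_k
\]
is exact; combined with $L(x_k,\mu^*) = f_0(x_k) \le f_0(x^*) = L(x^*,\mu^*)$, this contradicts \eqref{eq:2SufCond} for large $k$.

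Parts (3) and (4) follow by adapting these arguments. In part (3), LICQ and the KKT theorem yield $\mu^* \ge 0$, stationarity \eqref{eq:1NecCond}, and complementarity \eqref{eq:2Complemen}. If $\mu^* > 0$, then $f_1(x^*) = 0$ and \eqref{eq:2NecCond} follows from part (1); if $\mu^* = 0$, then $\nabla f_0(x^*) = 0$, and using feasible half-space directions together with their negatives (both allowed by the inequality constraint) one deduces $v^T A_0 v \ge 0$ for every $v \in \mathcal{R}^n$, so $f_0$ is convex and $x^*$ is a global minimizer on $\mathcal{R}^n$, hence on the feasible set of $(GTR)$. Part (4) mirrors part (2): when $\mu^* > 0$ and $f_1(x^*) = 0$, any inequality-feasible $x$ near $x^*$ satisfies $L(x,\mu^*) \le f_0(x)$ with equality at $x^*$, reducing strict local minimality of $f_0$ to strict quadratic minimality of $L(\cdot,\mu^*)$ on the tangent hyperplane, which is supplied by \eqref{eq:2SufCond}. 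The step most prone to technical slippage is the feasible-curve construction in part (1), where one must verify that $\nabla f_1(x^*) \ne 0$ does suffice to invoke the implicit function theorem and that the $\ddot{x}(0)$-term is indeed annihilated by $\nabla L(x^*,\mu^*) = 0$; everything else reduces to routine calculus on quadratic forms.
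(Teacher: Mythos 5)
The paper never proves Lemma \ref{th:OptCond}: it is quoted directly from \cite[Chapter 4]{bazaraa2006}, so your proposal is being measured against the standard textbook argument, which parts (1)--(3) of your sketch reproduce correctly. The feasible-curve construction via the implicit function theorem for \eqref{eq:2NecCond} is sound (with $\nabla f_1(x^*)\ne0$ the level set $\{f_1=0\}$ is a $C^\infty$ manifold near $x^*$ and every tangent vector is realized by a curve, and the $\ddot{x}(0)$ term is indeed killed by stationarity of the Lagrangian); the limiting-direction argument in part (2) is the standard one (only note that with the paper's normalization $f_i(x)=x^TA_ix+2b_i^Tx+c_i$ the exact identity reads $L(x_k,\mu^*)-L(x^*,\mu^*)=t_k^2\,v_k^T(A_0+\mu^*A_1)v_k$, without the factor $\tfrac12$ --- immaterial since only the sign is used); and in part (3) the case $\mu^*>0$ correctly reduces to part (1) via restriction to the boundary plus uniqueness of the multiplier, while the case $\mu^*=0$ correctly yields $A_0\succeq0$ because $v^TA_0v\ge0$ on the closed half-space $\{\nabla f_1(x^*)^Tv\le0\}$ and the quadratic form is even in $v$.

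The genuine gap is in part (4). The reduction you describe --- $L(x,\mu^*)\le f_0(x)$ for feasible $x$ together with positivity of $A_0+\mu^*A_1$ on the tangent hyperplane --- does not imply strict local minimality, because for feasible $x$ near $x^*$ the direction $x-x^*$ need not be tangent, and $A_0+\mu^*A_1$ may be negative on non-tangent feasible directions. Indeed, every ingredient you invoke already holds with $\mu^*\ge0$, whereas the conclusion fails for $\mu^*=0$: take $f_0(x)=x_2^2-x_1^2$, $f_1(x)=2x_1$, $x^*=0$, $\mu^*=0$; then $f_1(x^*)=0$, \eqref{eq:1NecCond} and \eqref{eq:2SufCond} hold, $L(\cdot,\mu^*)\le f_0$ on the feasible set and $L(\cdot,\mu^*)$ is strictly minimal on the tangent hyperplane $\{x_1=0\}$, yet $0$ is not a strict (or even nonstrict) local minimizer along $x_1<0$, $x_2=0$. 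So $\mu^*>0$ must enter through the first-order term, which your reduction discards. The repair is the same sequence argument as in part (2) with one extra step: for feasible $x_k=x^*+t_kv_k$ with $f_0(x_k)\le f_0(x^*)$ and $v_k\to v$, feasibility only gives $\nabla f_1(x^*)^Tv\le0$; the strict case $\nabla f_1(x^*)^Tv<0$ is excluded because $0\ge f_0(x_k)-f_0(x^*)=-\mu^*t_k\nabla f_1(x^*)^Tv_k+t_k^2v_k^TA_0v_k$ divided by $t_k$ yields, in the limit, $-\mu^*\nabla f_1(x^*)^Tv\le0$ and hence $\nabla f_1(x^*)^Tv\ge0$ since $\mu^*>0$. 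Only then is $v$ tangent, and the exact Lagrangian identity plus \eqref{eq:2SufCond} delivers the contradiction. With that step inserted, your part (4) is complete.
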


The following two lemmas were essentially contributed by Mor\'{e} \cite{more1993}, which tells us how to test the global minimizers of $(GTRE)$ and $(GTR)$ under some assumptions. 

\begin{lemma}[{\cite[Theorem 3.2]{more1993}}] \label{th:More1993-3.2}
	Suppose that $A_{1}\neq0$ and that $(GTRE)$ satisfies the two-side Slater's condition:
	\begin{equation}\label{eq:twosideslater}
		\exists\tilde{x}, \bar{x}\in\mathcal{R}^n \mbox{ such that } f_{1}(\tilde{x})<0<f_{1}(\bar{x}).
	\end{equation}
	A vector $x^{*}$ is a global minimizer of $(GTRE)$ if and only if $f_{1}(x^{*})=0$ and there exists a Lagrangian multiplier $\mu^{*}\in\mathcal{R}$ satisfying \eqref{eq:1NecCond} and $A_{0}+\mu^{*}A_{1}\succeq0.$
\end{lemma}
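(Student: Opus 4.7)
The plan is to prove the two directions separately, with the easier direction being the sufficiency of the stated KKT-like conditions, and the harder direction relying on strong duality via the equality-version S-lemma.

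\textbf{Sufficiency (``$\Leftarrow$'').} Assume $f_1(x^*)=0$ and some $\mu^*\in\mathcal{R}$ satisfies $\nabla f_0(x^*)+\mu^*\nabla f_1(x^*)=0$ together with $A_0+\mu^*A_1\succeq 0$. Pick any feasible $x$, i.e., $f_1(x)=0$. The standard completing-the-square identity for quadratic functions gives
\begin{equation*}
	f_0(x)-f_0(x^*)+\mu^*\bigl[f_1(x)-f_1(x^*)\bigr] = (x-x^*)^T(A_0+\mu^*A_1)(x-x^*) + \bigl[\nabla f_0(x^*)+\mu^*\nabla f_1(x^*)\bigr]^T(x-x^*).
\end{equation*}
Using $f_1(x)=f_1(x^*)=0$ on the left side and the stationarity condition together with the positive semidefiniteness on the right, one concludes $f_0(x)\geq f_0(x^*)$, hence $x^*$ is a global minimizer. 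Notice this direction uses neither $A_1\neq 0$ nor the two-side Slater's condition.

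\textbf{Necessity (``$\Rightarrow$'').} Assume $x^*$ is a global minimizer of $(GTRE)$ with optimal value $v^*:=f_0(x^*)$. The key tool is the equality-version S-lemma of \cite{xia2016}, which under the assumptions $A_1\neq 0$ and the two-side Slater's condition \eqref{eq:twosideslater} guarantees strong duality: there exists $\mu^*\in\mathcal{R}$ such that
\begin{equation*}
	\inf_{x\in\mathcal{R}^n}\bigl[f_0(x)+\mu^* f_1(x)\bigr]=v^*.
\end{equation*}
Since the infimum of the quadratic $x^T(A_0+\mu^*A_1)x+2(b_0+\mu^*b_1)^Tx+\mu^*c_1$ is finite, standard convex-quadratic theory forces $A_0+\mu^*A_1\succeq 0$ together with the consistency $b_0+\mu^*b_1\in\range(A_0+\mu^*A_1)$. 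Evaluating the Lagrangian at $x^*$ and using $f_1(x^*)=0$ (which follows because any unconstrained global minimum of $f_0+\mu^*f_1$ is attained at $x^*$, and the gradient of the Lagrangian must vanish there) yield the stationarity identity $\nabla f_0(x^*)+\mu^*\nabla f_1(x^*)=0$. This gives both required conditions.

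\textbf{Main obstacle.} The only nontrivial step is establishing the existence of the dual multiplier $\mu^*$ with the finite-minimum Lagrangian property, i.e., the S-lemma for equality. The two-side Slater's condition rules out the degenerate situation where the quadratic form on the feasible set is indefinite but does not meet both sides of the constraint, and the hypothesis $A_1\neq 0$ prevents the constraint from collapsing to a linear one where the equality S-lemma fails. Since this S-lemma is invoked directly from \cite{xia2016}, the proof reduces to citing that result and checking that its hypotheses are exactly those imposed here. The verification that $x^*$ lies in the argmin of the optimal Lagrangian (and hence inherits its stationarity) is routine once strong duality is in place.
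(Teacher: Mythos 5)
Your proof is correct, but note that the paper itself does not prove this lemma at all: it is imported verbatim as Theorem 3.2 of \cite{more1993}, so there is no internal proof to match. Your sufficiency argument is the standard Lagrangian completing-the-square identity and is right as written (and your observation that it needs neither $A_{1}\neq0$ nor \eqref{eq:twosideslater} is accurate). Your necessity argument takes a more modern route than Mor\'{e}'s original 1993 treatment: you reduce it to the S-lemma with equality of \cite{xia2016}, whose hypotheses (nonlinear constraint, i.e. $A_{1}\neq0$, and a sign-changing $f_{1}$, i.e. \eqref{eq:twosideslater}) are exactly the hypotheses of the lemma, so the reduction is legitimate; applying it to $f_{0}(x)-f_{0}(x^{*})\geq0$ on $\{f_{1}=0\}$ gives $\mu^{*}$ with $f_{0}+\mu^{*}f_{1}\geq f_{0}(x^{*})$ everywhere, and since $f_{1}(x^{*})=0$ makes $x^{*}$ an unconstrained minimizer of the Lagrangian, both \eqref{eq:1NecCond} and $A_{0}+\mu^{*}A_{1}\succeq0$ follow. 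This is consistent with the paper's own remark in the introduction that strong duality of $(GTRE)$ rests on the equality S-lemma, and it buys a short self-contained derivation at the price of invoking a result published much later than \cite{more1993}. One small wording slip: you present $f_{1}(x^{*})=0$ as something that ``follows'' from the argmin property of the Lagrangian, whereas it is simply feasibility of $x^{*}$; what follows from $x^{*}$ attaining the unconstrained minimum of $f_{0}+\mu^{*}f_{1}$ is the stationarity condition \eqref{eq:1NecCond}. This is a presentational issue, not a gap.
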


\begin{lemma}[{\cite[Theorem 1]{adachi2019}}] \label{th:More1993-3.4}
	Suppose that $(GTR)$ satisfies the Slater's condition:
	\begin{equation} \label{eq:salter-cons}
		\exists\tilde{x}\in\mathcal{R}^n \mbox{ such that } f_{1}(\tilde{x})<0.
	\end{equation}
	A vector $x^{*}$ is a global minimizer of $(GTR)$ if and only if $f_{1}(x^{*})\leq0$ and there exists a Lagrangian multiplier $\mu^{*}\geq0$ satisfying \eqref{eq:1NecCond} and 
	\begin{equation*}
		\mu^{*} f_{1}(x^{*})=0\,\, \mbox{ and } \,\, A_{0}+\mu^{*}A_{1}\succeq0.
	\end{equation*}
\end{lemma}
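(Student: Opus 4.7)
Since the lemma is a biconditional KKT-type characterization, my plan is to treat the two directions separately: sufficiency follows from a short convexity argument based on the semidefiniteness of the Lagrangian Hessian, while necessity rests on strong Lagrangian duality for $(GTR)$ under the Slater condition, i.e., on the (inhomogeneous) S-lemma cited in the introduction.

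For the \emph{sufficiency} direction, I would assume that $\mu^{*}\geq 0$ exists with $A_{0}+\mu^{*}A_{1}\succeq 0$, the stationarity \eqref{eq:1NecCond}, and $\mu^{*}f_{1}(x^{*})=0$. Introduce the Lagrangian $L(x):=f_{0}(x)+\mu^{*}f_{1}(x)$; its Hessian $2(A_{0}+\mu^{*}A_{1})$ is positive semidefinite, so $L$ is convex on $\mathcal{R}^{n}$, and \eqref{eq:1NecCond} is exactly $\nabla L(x^{*})=0$. Hence $x^{*}$ is an unconstrained global minimizer of $L$. For any feasible $y$, complementarity gives $L(x^{*})=f_{0}(x^{*})$ and
\[
f_{0}(y)+\mu^{*}f_{1}(y)=L(y)\geq L(x^{*})=f_{0}(x^{*}),
\]
so $f_{0}(y)\geq f_{0}(x^{*})-\mu^{*}f_{1}(y)\geq f_{0}(x^{*})$, where the last inequality uses $\mu^{*}\geq 0$ and $f_{1}(y)\leq 0$. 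This yields the global optimality of $x^{*}$.

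For the \emph{necessity} direction I would invoke strong Lagrangian duality for $(GTR)$, which under the Slater condition \eqref{eq:salter-cons} is precisely the S-lemma of Yakubovich (see \cite{yakubovich1971, polik2007}). The dual problem
\[
\max_{\mu\geq 0}\; g(\mu),\qquad g(\mu):=\inf_{x\in\mathcal{R}^{n}}\bigl\{x^{T}(A_{0}+\mu A_{1})x+2(b_{0}+\mu b_{1})^{T}x+\mu c_{1}\bigr\},
\]
has no duality gap and is attained at some $\mu^{*}\geq 0$; finiteness of $g(\mu^{*})$ forces both $A_{0}+\mu^{*}A_{1}\succeq 0$ and $b_{0}+\mu^{*}b_{1}\in\range(A_{0}+\mu^{*}A_{1})$. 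Combining $g(\mu^{*})=f_{0}(x^{*})$ with feasibility of $x^{*}$ yields the sandwich
\[
f_{0}(x^{*})=g(\mu^{*})\leq L(x^{*},\mu^{*})=f_{0}(x^{*})+\mu^{*}f_{1}(x^{*})\leq f_{0}(x^{*}),
\]
forcing equality throughout: the first equality makes $x^{*}$ a minimizer of the convex $L(\cdot,\mu^{*})$, whence \eqref{eq:1NecCond}, while the second equality delivers $\mu^{*}f_{1}(x^{*})=0$.

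The \emph{main obstacle} is the strong-duality step, since for nonconvex $(GTR)$ the absence of a duality gap is a nontrivial fact specific to quadratic functions and relies crucially on the Slater assumption; the rest is essentially bookkeeping. In the final write-up I would either import \cite[Theorem 1]{adachi2019} wholesale as the paper does, or recover the necessity from the S-lemma along the lines sketched above, whichever keeps the exposition most self-contained.
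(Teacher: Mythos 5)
The paper itself gives no proof of this lemma: it is imported verbatim by citation (\cite[Theorem 1]{adachi2019}, essentially Mor\'{e}'s Theorem 3.4), so there is no in-paper argument to compare against. Your sketch is a correct and standard derivation of that cited result. The sufficiency half is airtight and needs neither Slater nor any deep tool: convexity of $L(\cdot,\mu^{*})$ from $A_{0}+\mu^{*}A_{1}\succeq0$, stationarity, complementarity, and the sign conditions give global optimality exactly as you write. For necessity, the only point that deserves care is the one you flag: generic Lagrangian duality does not by itself yield a zero gap, nor dual attainment, for a nonconvex QCQP, so the clause ``is attained at some $\mu^{*}\geq0$'' must itself come from the S-lemma. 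Applied to the valid implication $f_{1}(x)\leq0\Rightarrow f_{0}(x)-f_{0}(x^{*})\geq0$ under \eqref{eq:salter-cons}, the S-lemma produces $\mu^{*}\geq0$ with $f_{0}(x)+\mu^{*}f_{1}(x)\geq f_{0}(x^{*})$ for all $x$, which simultaneously gives $g(\mu^{*})=f_{0}(x^{*})$ (so attainment is a conclusion, not an assumption), $A_{0}+\mu^{*}A_{1}\succeq0$ from boundedness below, and then your sandwich argument delivers \eqref{eq:1NecCond} and $\mu^{*}f_{1}(x^{*})=0$. Since you explicitly propose to route the attainment step through the S-lemma (or to import the cited theorem wholesale, as the paper does), the proposal stands as a complete and correct alternative to the paper's citation-only treatment.
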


The following lemma tells us that any local-nonglobal minimizer of $(GTRE)$ or $(GTR)$ is a regular point. Its $(GTR)$ version was first given by Taati and Salahi \cite[Lemma 2.5]{taati2020}, and its $(GTRE)$ version was conditionally presented by Wang, Song and Xia \cite[Theorem 2.3]{xia2021}.  Here we present a combined proof for both $(GTR)$ and $(GTRE)$  more brief than ones in \cite{taati2020, xia2021}, and show that those preconditions on $(GTRE)$ given in \cite{xia2021} can be got rid off.

\begin{lemma}[{\cite[Lemma 2.5]{taati2020}; \cite[Theorem 2.3]{xia2021}}] \label{th:Nonglobal-LICQ}
	If $x^{*}$ is a local-nonglobal minimizer of $(GTRE)$ or $(GTR)$, then $f_{1}(x^{*})=0$ and $\nabla f_{1}(x^{*})\ne0$
	(i.e. $x^*$ is a regular point).
\end{lemma}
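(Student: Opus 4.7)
My plan is to split the proof into two independent claims: (i) $f_1(x^*)=0$, and (ii) $\nabla f_1(x^*)\ne 0$. Both will be handled by contradiction, and in both steps the key leverage is that once the constraint degenerates at $x^*$, the feasible set inherits enough homogeneity to force $x^*$ to be a global minimizer.

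For (i): in the equality case $(GTRE)$, the identity $f_1(x^*)=0$ is automatic from feasibility. For the inequality case $(GTR)$, I would suppose $f_1(x^*)<0$, so the constraint is inactive in a neighborhood of $x^*$ and $x^*$ is an unconstrained local minimizer of the quadratic $f_0$. Standard radial-variation arguments along $y=tv$ (letting $t\to 0^{\pm}$) then force $A_0x^*+b_0=0$ and $A_0\succeq 0$, making $f_0$ convex with a critical point at $x^*$. Hence $x^*$ minimizes $f_0$ over all of $\mathcal{R}^n$ and, a fortiori, over the feasible set of $(GTR)$, contradicting the local-nonglobal hypothesis.

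For (ii), which carries the real content: assume $\nabla f_1(x^*)=0$, i.e.\ $A_1x^*+b_1=0$. Combined with $f_1(x^*)=0$ from part (i), the Taylor expansion of $f_1$ at $x^*$ is exact and yields $f_1(x^*+y)=y^T A_1 y$ for every $y\in\mathcal{R}^n$. Consequently, the feasible set equals $x^*+K$, where $K=\{y:y^T A_1 y\le 0\}$ for $(GTR)$ and $K=\{y:y^T A_1 y=0\}$ for $(GTRE)$. In either case $K=-K$ and $tK\subseteq K$ for every $t\in\mathcal{R}$; this scaling invariance is the structural fact I will use to globalise the local minimality.

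Concretely, write $g=A_0x^*+b_0$ and pick $\epsilon>0$ with $f_0(x^*+y)\ge f_0(x^*)$ for every $y\in K$ with $\|y\|<\epsilon$. For an arbitrary $y\in K$, the whole line $\{ty:t\in\mathcal{R}\}$ lies in $K$, so for $|t|$ small enough the expansion
\[
2t\, g^T y \;+\; t^2\, y^T A_0 y \;\ge\; 0
\]
holds; applying it with both signs of $t$ and letting $t\to 0$ forces $g^T y=0$, and then dividing by $t^2>0$ forces $y^T A_0 y\ge 0$. Both are statements about $y$ alone and hence hold for every $y\in K$ without any smallness restriction. Finally, for an arbitrary feasible $x=x^*+y$ we obtain
\[
f_0(x)-f_0(x^*) \;=\; 2g^T y + y^T A_0 y \;=\; y^T A_0 y \;\ge\; 0,
\]
so $x^*$ is actually a global minimizer, contradicting the hypothesis. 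The main obstacle I anticipate is simply keeping $(GTR)$ and $(GTRE)$ in lock-step: the symmetry $K=-K$ (which persists in both cases) is exactly what permits a single unified argument and, I suspect, also why the extra hypotheses imposed on $(GTRE)$ in \cite{xia2021} are no longer needed. The degenerate sub-case $A_1=0$ requires no separate treatment, since it simply makes $K=\mathcal{R}^n$ and the argument collapses into the unconstrained one from part (i).
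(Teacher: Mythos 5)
Your proposal is correct and uses essentially the same mechanism as the paper: once $f_1(x^*)=0$ and $\nabla f_1(x^*)=0$, the feasible set is invariant under scaling of displacements from $x^*$, so expanding $f_0$ along the line $x^*+t y$ and using local minimality forces $\nabla f_0(x^*)^T y=0$ and $y^TA_0y\ge 0$, whence $f_0$ cannot decrease at any feasible point. The only cosmetic differences are that you run the argument over all feasible displacements (concluding $x^*$ is global) while the paper applies it just to a single witness $\hat{x}$ with $f_0(\hat{x})<f_0(x^*)$, and that you spell out the case $f_1(x^*)<0$ for $(GTR)$, which the paper dismisses as apparent.
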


\begin{proof}
	It is apparent that $f_{1}(x^{*})=0$. As $x^{*}$ is a local-nonglobal minimizer, there is a feasible solution $\hat{x}\neq x^{*}$ of $(GTRE)$ (or $(GTR)$) such that
	\begin{equation}\label{eq:f0hatxsmaller}
		f_{0}(\hat{x})-f_{0}(x^*)<0.
	\end{equation} 
	Put $\hat{d}=\hat{x}-x^{*}$. Suppose by contradiction that $\nabla f_{1}(x^{*})=0$. Then one has
	\begin{equation*}%\label{eq:f1hatxTaylor}
		\hat{d}^{T}A_{1}\hat{d}=
		f_{1}(x^{*})+\nabla f_{1}(x^{*})^{T}\hat{d}+\hat{d}^{T}A_{1} \hat{d} =
		f_{1}(\hat{x})=0\,(\mbox{or}\leq0),
	\end{equation*} 
	which implies that
	\begin{equation*} %\label{eq:Sec2x*+tdfeasible}
		f_{1}(x^{*}+t \hat{d})=f_{1}(x^{*})+t \nabla f_{1}(x^{*})^{T}\hat{d}+t^{2}\hat{d}^{T}A_{1}\hat{d}=t^{2}\hat{d}^{T}A_{1}\hat{d}=0\,(\mbox{or}\leq0),\,\forall t\in\mathcal{R},
	\end{equation*}
	that is the straight line $\{x^*+t \hat{d}\,|\,t\in\mathcal{R}\}$ is feasible to  $(GTRE)$ (or $(GTR)$). So, due to $x^{*}$ is a local minimizer, there is a positive number $\delta>0$ such that 
	\begin{equation*}
		0\leq f_{0}(x^{*}+t \hat{d})-f_{0}(x^{*})=t \nabla f_{0}(x^{*})^{T}\hat{d}+t^2\hat{d}^{T}A_{0}\hat{d} ,\quad\forall t\in (-\delta,\delta),
	\end{equation*}
	which implies that $\nabla f_{0}(x^{*})^{T}\hat{d}=0$ and $\hat{d}^{T}A_{0}\hat{d}\geq0.$
	Taking $t=1$, one obtains that 
	\begin{equation}\label{eq:f0hatxgreater}
		f_{0}(\hat{x})-f_{0}(x^{*})=f_{0}(x^*+ \hat{d})-f_{0}(x^{*})=\hat{d}^{T}A_{0}\hat{d}\geq0,
	\end{equation}
	which contradicts with \eqref{eq:f0hatxsmaller}. Thus $\nabla f_{1}(x^{*})\ne0$ and the proof is completed.
\end{proof}

The following lemma shows that, if $(GTRE)$ has a local-nonglobal minimizer, then $f_1(x)$ is not a linear function.

\begin{lemma} \label{th:GTRE-A1ne0}
	If $x^{*}$ is a local-nonglobal minimizer of $(GTRE)$,  then $A_{1}\ne0$.
\end{lemma}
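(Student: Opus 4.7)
The plan is to argue by contradiction: assume $A_1=0$ and use the necessary optimality conditions to show that $x^*$ would then be a \emph{global} minimizer of $(GTRE)$, contradicting the assumption that it is local-nonglobal.

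First I would invoke Lemma \ref{th:Nonglobal-LICQ} to conclude $\nabla f_1(x^*)\ne 0$. Since $A_1=0$ gives $\nabla f_1(x^*)=2b_1$, this forces $b_1\ne 0$. Hence the feasible set of $(GTRE)$ is the hyperplane $H=\{x\in\mathcal{R}^n\,|\,2b_1^Tx+c_1=0\}$, and its tangent space at $x^*$ is $T=\{v\in\mathcal{R}^n\,|\,b_1^Tv=0\}$.

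Next I would apply the necessary optimality condition Lemma \ref{th:OptCond}(1) to obtain a multiplier $\mu^*\in\mathcal{R}$ with $\nabla f_0(x^*)+\mu^*\nabla f_1(x^*)=0$ and $v^T(A_0+\mu^*A_1)v\geq 0$ for all $v\in T$. Since $A_1=0$, the second inequality reduces to $v^TA_0v\geq 0$ for every $v\in T$.

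For any feasible $x$ of $(GTRE)$, write $x=x^*+v$ with $v\in T$. A direct expansion yields
\begin{equation*}
f_0(x)-f_0(x^*)=\nabla f_0(x^*)^T v+v^TA_0v=-\mu^*\,\nabla f_1(x^*)^T v+v^TA_0v=-2\mu^*b_1^Tv+v^TA_0v=v^TA_0v\geq 0,
\end{equation*}
where the last step uses $b_1^Tv=0$. Thus $f_0(x)\geq f_0(x^*)$ for every feasible $x$, which means $x^*$ is a global minimizer of $(GTRE)$, contradicting the local-nonglobal hypothesis. There is no real obstacle here; the only subtlety is recognizing that in the affine (equality) setting the necessary condition already gives semidefiniteness on all of $T$, so the quadratic perturbation along the whole hyperplane is nonnegative, hence $x^*$ cannot be merely local.
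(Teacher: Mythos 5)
Your proof is correct, but it follows a different route from the paper's. You go through the KKT machinery: Lemma \ref{th:Nonglobal-LICQ} gives $\nabla f_1(x^*)=2b_1\ne0$, so the feasible set is a hyperplane, and then the first- and second-order necessary conditions of Lemma \ref{th:OptCond}(1) give $\nabla f_0(x^*)^Tv=0$ and $v^TA_0v\ge0$ on the tangent space $T$; since with an affine constraint every feasible point is $x^*+v$ with $v\in T$, the exact quadratic expansion shows $x^*$ is globally optimal, a contradiction. The paper instead avoids the multiplier and the regularity lemma altogether: assuming $A_1=0$, it takes a feasible $\hat{x}$ with $f_0(\hat{x})<f_0(x^*)$, notes that the whole line $\{x^*+t\hat{d}\}$ through $x^*$ and $\hat{x}$ is feasible (because $f_1$ is affine and vanishes at both endpoints), and reuses the one-dimensional argument from the proof of Lemma \ref{th:Nonglobal-LICQ}: local minimality along this line forces $\nabla f_0(x^*)^T\hat{d}=0$ and $\hat{d}^TA_0\hat{d}\ge0$, so at $t=1$ one gets $f_0(\hat{x})\ge f_0(x^*)$, a contradiction. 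Your version buys a cleaner structural statement (for an affine constraint, the second-order necessary conditions at a regular point already certify global optimality over the hyperplane), at the price of invoking the second-order KKT conditions and Lemma \ref{th:Nonglobal-LICQ}; the paper's version is more elementary, needing only the restriction of $f_0$ to a single feasible line and no multiplier or constraint-qualification argument. Both are logically sound and non-circular, since Lemma \ref{th:Nonglobal-LICQ} and Lemma \ref{th:OptCond} precede this lemma and do not rely on it.
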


\begin{proof}
	As $x^{*}$ is  nonglobal, there exists another vector $\hat{x}\in\mathcal{R}^{n}$ satisfying $f_{1}(\hat{x})=0$ and $\hat{x}\ne x^*$ such that \eqref{eq:f0hatxsmaller} holds. Put $\hat{d}=\hat{x}-x^{*}$. Suppose by contradiction that $A_{1}=0$.  
	Then the straight line  $\{x^*+t \hat{d}\,|\,t\in\mathcal{R}\}$ connecting $x^{*}$ and $\hat{x}$ is feasible to  $(GTRE)$. Exactly following the proof of Lemma \ref{th:Nonglobal-LICQ}, one obtains the relation \eqref{eq:f0hatxgreater} also, which contradicts with \eqref{eq:f0hatxsmaller}. The proof is completed.
\end{proof}

\begin{remark}\label{rmk:s2-cexmpl-linear-GTR}
	The conclusion in Lemma \ref{th:GTRE-A1ne0} is not true for $(GTR)$, that is, $(GTR)$ may have a local-nonglobal minimizer even if $A_{1}=0$. For example, the problem 
	\begin{equation*} %\label{eq:s2conterexample-2}
		\min\{x_1^2-x_2^2\,\,\,|\,-x_1+x_2\leq0\}
	\end{equation*} 
	has a local-nonglobal minimizer $[1,1]^T$.
\end{remark}

The following lemma was essentially contributed by Mor\'{e} \cite{more1993}.

\begin{lemma}[{\cite[Lemma 3.1]{more1993}}] \label{th:More1993-3.1}
	If there is a vector $x^*\in\mathcal{R}^n$ such that $f_{1}(x^{*})=0$ and $\nabla f_{1}(x^{*})\neq0$, then the function $f_{1}$ is indefinite., that is, it satisfies the two-side Slater's condition \eqref{eq:twosideslater}.
\end{lemma}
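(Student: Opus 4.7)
The plan is to exploit the direction $d := \nabla f_1(x^*)$ as a test direction along which $f_1$ changes sign. Since $f_1$ is a quadratic polynomial, restricting it to the line $\{x^*+td\mid t\in\mathcal{R}\}$ produces a one-variable quadratic $g(t):=f_1(x^*+td)$ whose behaviour near $t=0$ is governed by its linear term.

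First I would compute $g(t)$ explicitly. Using $f_1(x)=x^TA_1x+2b_1^Tx+c_1$ and $\nabla f_1(x^*)=2A_1x^*+2b_1=d\ne 0$, one obtains
\begin{equation*}
g(t)=f_1(x^*)+t\,\nabla f_1(x^*)^T d+t^2\,d^T A_1 d=t\,\|d\|^2+t^2\,d^T A_1 d,
\end{equation*}
where I have used the hypothesis $f_1(x^*)=0$ and $\nabla f_1(x^*)^T d=\|d\|^2$. Since $d\ne 0$, the linear coefficient $\|d\|^2$ is strictly positive.

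Next I would conclude by a sign analysis at $t=0$. Because $g$ is differentiable with $g(0)=0$ and $g'(0)=\|d\|^2>0$, the function $g$ is strictly increasing at the origin, so there exists $\delta>0$ such that $g(t)<0$ for all $t\in(-\delta,0)$ and $g(t)>0$ for all $t\in(0,\delta)$. Choosing any $t_-\in(-\delta,0)$ and $t_+\in(0,\delta)$ and setting $\tilde{x}:=x^*+t_- d$ and $\bar{x}:=x^*+t_+ d$ yields $f_1(\tilde{x})<0<f_1(\bar{x})$, which is exactly the two-side Slater condition \eqref{eq:twosideslater}.

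There is no real obstacle here: the argument is a one-line Taylor expansion along the gradient direction, and the only subtlety is to note that the quadratic term $t^2 d^T A_1 d$ does not affect the sign of $g$ for $|t|$ sufficiently small. The lemma therefore follows without any additional assumption on $A_1$ or $b_1$ beyond what is already imposed through $\nabla f_1(x^*)\ne 0$.
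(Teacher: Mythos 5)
Your proof is correct and follows essentially the same route as the paper: both restrict $f_1$ to the line through $x^*$ in the gradient direction and observe that the linear term $t\,\Vert\nabla f_1(x^*)\Vert^2$ dominates the quadratic term for small $|t|$, yielding points with $f_1<0$ and $f_1>0$ (the paper just packages this as the factorization $f_1(x^*+t\nabla f_1(x^*))=t\,h(t)$ with $h(t)>0$ near $0$, rather than via the sign of $g'(0)$).
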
 

\begin{proof}
	As $\nabla f_{1}(x^{*})\neq0$, there is a positive number $\delta>0$ such that 
	\begin{equation*} %\label{eq:sec2deltaf1d+tda1d>0}
		h(t):=\Vert\nabla f_{1}(x^{*})\Vert^2+t\nabla f_{1}(x^{*})^TA_1\nabla f_{1}(x^{*})>0, \forall t\in[-\delta,\delta].
	\end{equation*}
	Take $\tilde{x}:=x^{*}-\delta\nabla f_{1}(x^{*})$ and $\bar{x}:=x^{*}+\delta\nabla f_{1}(x^{*})$. Note that $f_{1}(x^{*})=0$. Thus one obtains that
	\begin{equation*} 
		\begin{array}{ll}
			f_{1}(\tilde{x})=f_{1}(x^{*})-\delta\Vert\nabla f_{1}(x^{*})\Vert^2+\delta^2\nabla f_{1}(x^{*})^TA_1\nabla f_{1}(x^{*})=
			-\delta h(-\delta)<0 \mbox{ and} \vspace{1mm}\\
			f_{1}(\bar{x})=f_{1}(x^{*})+\delta\Vert\nabla f_{1}(x^{*})\Vert^2+\delta^2\nabla f_{1}(x^{*})^TA_1\nabla f_{1}(x^{*})=
			\delta h(\delta)>0.
		\end{array}
	\end{equation*}
	So $f_{1}(x)$ satisfies the two-side Slater's condition \eqref{eq:twosideslater}. 
\end{proof}

\begin{lemma} \label{th:multiplier-uniqueness}
	Suppose that the functions $f_0$, $f_1$ and a multiplier $\mu^{*}$ satisfy the first-order condition \eqref{eq:1NecCond} at $x^{*}$.  If $\nabla f_{1}(x^{*})\ne0$, then such the multiplier values $\mu^*$ that satisfy \eqref{eq:1NecCond} at $x^{*}$ are  unique. 
\end{lemma}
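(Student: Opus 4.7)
The statement is essentially immediate from the first-order condition \eqref{eq:1NecCond}. My plan is the standard subtract-two-copies trick.

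Suppose, for the sake of contradiction or just for a direct argument, that two multipliers $\mu_1^{*}$ and $\mu_2^{*}$ both satisfy the stationarity equation at $x^{*}$; that is,
\begin{equation*}
\nabla f_0(x^{*}) + \mu_1^{*}\nabla f_1(x^{*}) = 0 \quad \text{and} \quad \nabla f_0(x^{*}) + \mu_2^{*}\nabla f_1(x^{*}) = 0.
\end{equation*}
Subtracting the two identities eliminates $\nabla f_0(x^{*})$ and leaves $(\mu_1^{*} - \mu_2^{*})\nabla f_1(x^{*}) = 0$. Since $\nabla f_1(x^{*}) \neq 0$ by hypothesis, the scalar factor must vanish, i.e.\ $\mu_1^{*} = \mu_2^{*}$, which gives uniqueness.

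There is no real obstacle here; the lemma is a one-line linear-algebra observation that is recorded separately only because it will be invoked repeatedly in the sequel (in particular whenever a local-nonglobal minimizer is identified, Lemma \ref{th:Nonglobal-LICQ} guarantees $\nabla f_1(x^{*}) \neq 0$ and hence the Lagrange multiplier attached to it is well-defined). I would present the proof in two or three lines, exactly as above, without invoking any of the preceding lemmas.
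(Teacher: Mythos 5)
Your proposal is correct and coincides with the paper's own argument: both subtract two copies of the stationarity equation \eqref{eq:1NecCond} to obtain $(\mu_1^{*}-\mu_2^{*})\nabla f_{1}(x^{*})=0$ and conclude from $\nabla f_{1}(x^{*})\ne0$ that the multipliers agree. Nothing is missing.
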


\begin{proof}
	Assume that there is another multiplier $\mu^{*}_1$  satisfying \eqref{eq:1NecCond}, that is,\\ $\nabla f_{0}(x^{*}) +\mu^*_1\nabla f_{1}(x^{*})=0$. Then one obtains that
	$$
	\left(\mu^{*}_1-\mu^{*}\right)\nabla f_{1}(x^{*})=0\,\,\Longrightarrow\,\,\mu^{*}_1=\mu^{*}, \mbox{ due to }\nabla f_{1}(x^{*})\ne0,
	$$
	which means that the corresponding Lagrangian multiplier values $\mu^*$ are unique.
\end{proof}

The following result was conditionally presented by Wang, Song and Xia \cite[Theorem 3.1]{xia2021}.
\begin{lemma}[{\cite[Theorem 3.1]{xia2021}}] \label{th:GTR-nonglobal-lambda>0}
	If $x^{*}$ is a local-nonglobal minimizer of  $(GTR)$ and $\mu^*\geq0$ is the Lagrangian multiplier  satisfying \eqref{eq:1NecCond} at $x^{*}$, then $\mu^*>0$.
\end{lemma}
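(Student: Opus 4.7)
I would argue by contradiction: suppose $\mu^{*}=0$ and derive that $x^{*}$ must then be a global minimizer of $(GTR)$, contradicting the local-nonglobal hypothesis. The first move is to invoke Lemma~\ref{th:Nonglobal-LICQ} to obtain $f_{1}(x^{*})=0$ and $\nabla f_{1}(x^{*})\neq 0$; this is the regularity required by the necessary optimality conditions in Lemma~\ref{th:OptCond}(3). The concluding sentence of that lemma is precisely what I need: whenever a local minimizer of $(GTR)$ comes with a vanishing multiplier, it is already a global minimizer of $(GTR)$. Citing this clause closes the contradiction, and together with the hypothesis $\mu^{*}\ge 0$ yields $\mu^{*}>0$.

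If one prefers a self-contained argument that avoids invoking the ``moreover'' clause of Lemma~\ref{th:OptCond}(3), the same conclusion can be reached by a short tangent-direction analysis. Under $\mu^{*}=0$ the first-order condition \eqref{eq:1NecCond} gives $\nabla f_{0}(x^{*})=0$, so the Taylor expansion of the quadratic $f_{0}$ collapses to $f_{0}(x^{*}+t d)=f_{0}(x^{*})+t^{2}d^{T}A_{0}d$. Since $\nabla f_{1}(x^{*})\neq 0$, the open half-space $\{d:\nabla f_{1}(x^{*})^{T}d<0\}$ is nonempty; for every such $d$ the half-ray $x^{*}+td$ is strictly feasible for small $t>0$, and local optimality of $x^{*}$ then forces $d^{T}A_{0}d\ge 0$. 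By continuity this inequality extends to all $d\in\mathcal{R}^{n}$, so $A_{0}\succeq 0$. Combined with $\nabla f_{0}(x^{*})=0$, this promotes $x^{*}$ to an unconstrained global minimizer of the convex quadratic $f_{0}$, hence a global minimizer of $(GTR)$, again contradicting the nonglobal hypothesis.

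\textbf{Where the work is.} There is no deep obstacle once the regularity statement $\nabla f_{1}(x^{*})\neq 0$ is in hand: the whole improvement over the conditional version in \cite{xia2021} is inherited from the strengthened, assumption-free form of Lemma~\ref{th:Nonglobal-LICQ} established earlier in this section. The only substantive observation is that a vanishing multiplier at a regular local minimizer of the (possibly nonconvex) quadratic problem $(GTR)$ already forces enough semidefiniteness of $A_{0}$—via feasible perturbations into the half-space $\{\nabla f_{1}(x^{*})^{T}d<0\}$—to compel global optimality.
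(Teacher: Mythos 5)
Your first argument is exactly the paper's proof: Lemma~\ref{th:Nonglobal-LICQ} supplies $f_1(x^*)=0$ and $\nabla f_1(x^*)\ne0$, and the ``moreover'' clause of Lemma~\ref{th:OptCond}(3) then rules out $\mu^*=0$ for a local-nonglobal minimizer. Your self-contained variant is also essentially sound (just note that extending $d^{T}A_{0}d\ge 0$ from the open half-space to all of $\mathcal{R}^{n}$ uses the evenness of the quadratic form together with continuity, not continuity alone), but it is not needed.
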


\begin{proof}
	As $x^{*}$ is a local-nonglobal minimizer of  $(GTR)$, one has $f_{1}(x^{*})=0$ and $\nabla f_{1}(x^{*})\ne0$ from Lemma \ref{th:Nonglobal-LICQ}. Then the conclusion ``$\mu^*>0$" follows directly from Lemma \ref{th:OptCond}{(3)}. 
\end{proof}

\begin{remark} \label{eq:s2conterexample-1}
	Note that, for $(GTRE)$, the optimal Lagrangian multiplier of a local-nonglobal minimizer may be equal to zero. For example, the problem
	\begin{equation*} 
		\min\{x_1^2-2x_{1}-x_2^2\,\,\,|\,x_{1}x_{2}=0\}
	\end{equation*}
	has a unique local-nonglobal minimizer $x^{*}=[1,0]^{T}$ with the corresponding optimal Lagrangian multiplier $\mu^{*}=0$.
\end{remark}

The following lemma was first conditionally given by Taati and Salahi \cite[Lemma 3.1]{taati2020}.   As its proof can directly follow the proof of \cite[Lemma 3.1]{taati2020}, we omit the proof.

\begin{lemma}[{\cite[Lemma 3.1]{taati2020}}] \label{th:exac-one-negative-eigenvalue}
	If $x^{*}$ is a local-nonglobal minimizer of $(GTRE)$ or $(GTR)$ and  $\mu^{*}$ is the  Lagrangian multiplier  satisfying \eqref{eq:1NecCond} at $x^{*}$,  then the matrix $A_{0}+\mu^{*}A_{1}$ has exactly one negative eigenvalue. 
\end{lemma}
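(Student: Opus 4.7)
Write $M := A_0 + \mu^* A_1$. The plan is to prove the two one-sided bounds separately: (i) $M$ has at most one negative eigenvalue, and (ii) $M$ has at least one negative eigenvalue.

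For step (i), I will use the second-order necessary condition \eqref{eq:2NecCond}, which holds at $x^*$: for $(GTRE)$ it is part of the necessary conditions, and for $(GTR)$ Lemma \ref{th:GTR-nonglobal-lambda>0} yields $\mu^*>0$, so the corresponding second-order condition is in force. By Lemma \ref{th:Nonglobal-LICQ}, $\nabla f_1(x^*)\ne0$, so the tangent subspace $T := \{v\in\mathcal{R}^n : \nabla f_1(x^*)^T v = 0\}$ has dimension $n-1$, and $v^T M v \ge 0$ for every $v\in T$. If $M$ had two or more negative eigenvalues, there would exist a $2$-dimensional subspace $W$ on which $M$ is negative definite. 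Since $\dim W + \dim T = 2+(n-1) > n$, we would obtain $0\ne v\in W\cap T$ with $v^T M v < 0$ and $v^T M v \ge 0$ simultaneously, a contradiction. Hence $M$ has at most one negative eigenvalue.

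For step (ii), I argue by contradiction: suppose $M \succeq 0$. I then intend to show that $x^*$ satisfies the sufficient conditions to be a \emph{global} minimizer, contradicting the hypothesis that it is nonglobal. The prerequisites are easy to collect from the earlier lemmas: Lemma \ref{th:Nonglobal-LICQ} gives $f_1(x^*)=0$ and $\nabla f_1(x^*)\ne0$; Lemma \ref{th:More1993-3.1} then produces the two-side Slater condition \eqref{eq:twosideslater}, which in particular implies the one-side Slater condition \eqref{eq:salter-cons}. In the $(GTRE)$ case, Lemma \ref{th:GTRE-A1ne0} gives $A_1\ne0$, so Lemma \ref{th:More1993-3.2} applies and asserts that a feasible point with $M\succeq 0$ and \eqref{eq:1NecCond} is a global minimizer of $(GTRE)$. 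In the $(GTR)$ case, Lemma \ref{th:GTR-nonglobal-lambda>0} yields $\mu^* > 0$, and since $f_1(x^*)=0$ the complementarity $\mu^* f_1(x^*)=0$ is automatic; together with $M\succeq 0$ and \eqref{eq:1NecCond}, Lemma \ref{th:More1993-3.4} likewise declares $x^*$ a global minimizer of $(GTR)$. Either way we contradict the nonglobality of $x^*$, so $M$ must have at least one negative eigenvalue.

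Combining (i) and (ii), $M = A_0 + \mu^* A_1$ has exactly one negative eigenvalue. The main obstacle here is really only bookkeeping: one must be careful that the precise hypotheses of the Mor\'{e}-type global optimality tests (Lemmas \ref{th:More1993-3.2}, \ref{th:More1993-3.4})—namely $A_1\ne 0$, Slater or two-side Slater, nonnegativity of the multiplier, and complementarity—are all delivered by the preceding lemmas at a local-nonglobal minimizer; once that is verified, the positive-semidefiniteness assumption collapses the distinction between local and global, giving the required contradiction. The dimension-counting half (i) is standard and unobstructed.
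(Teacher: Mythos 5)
Your argument is correct: part (i) is the standard dimension-count against the second-order necessary condition on the $(n-1)$-dimensional tangent space (with $\mu^*>0$ from Lemma \ref{th:GTR-nonglobal-lambda>0} in the $(GTR)$ case), and part (ii) correctly assembles the hypotheses of Lemmas \ref{th:More1993-3.2} and \ref{th:More1993-3.4} via Lemmas \ref{th:Nonglobal-LICQ}, \ref{th:GTRE-A1ne0} and \ref{th:More1993-3.1} to show $A_0+\mu^*A_1\succeq0$ would force global optimality. This is essentially the same route as the proof the paper defers to (Taati--Salahi's Lemma 3.1), so no further comparison is needed.
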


The following result plays an important role in this section. It shows that, for a local-nonglobal minimizer $x^*$ of $(GTRE)$ or $(GTR)$ and its associated Lagrangian multiplier $\mu^{*}$,  any vector $\bar{v}$ in the tangent space $\{\bar{v}\in\mathcal{R}^n\,|\,\nabla f_{1}(x^{*})^{T}\bar{v}=0\}$ satisfies either the sufficient condition \eqref{eq:2SufCond} or the following relation \eqref{eq:barvA1barv=barvA0barv=0}.

\begin{lemma} \label{th:barvTA1barv=barvTA0barv=0}
	If $x^{*}$ is a local-nonglobal minimizer of $(GTRE)$ or $(GTR)$,   then there exists a Lagrangian multiplier $\mu^{*}$ such that  \eqref{eq:1NecCond} and \eqref{eq:2NecCond} hold, and for any nonzero vector $\bar{v}\in\mathcal{R}^n$ that satisfies 
	\begin{equation}\label{eq:hatvTHhatv=0}
		\nabla f_{1}(x^{*})^{T}\bar{v}=0\mbox{ and } \bar{v}^{T}(A_{0}+\mu^{*}A_{1})\bar{v}=0,
	\end{equation}	
	there must be
	\begin{equation} \label{eq:barvA1barv=barvA0barv=0}
		\bar{v}^{T}A_{0}\bar{v}=\bar{v}^{T}A_{1}\bar{v}=0.
	\end{equation}
\end{lemma}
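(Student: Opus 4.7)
The plan is to construct a feasible smooth curve on the boundary $\{f_1=0\}$ tangent to $\bar{v}$ at $x^*$, then inspect the Taylor expansion of $f_0$ along it. First I would invoke Lemma~\ref{th:Nonglobal-LICQ} to get $\nabla f_1(x^*)\ne 0$ and Lemma~\ref{th:OptCond} to produce a multiplier $\mu^*$ satisfying \eqref{eq:1NecCond}--\eqref{eq:2NecCond}; set $Q:=A_0+\mu^*A_1$ and $g:=\nabla f_1(x^*)$. Because $g\ne 0$, the implicit function theorem shows that $\{f_1=0\}$ is a smooth hypersurface near $x^*$, and for any $h_2\in\mathcal{R}^n$ satisfying $g^T h_2=-2\bar{v}^T A_1\bar{v}$ (the constraint forced by differentiating $f_1(x(t))\equiv 0$ twice) there is a smooth curve $x(t)$ on this hypersurface with $x(0)=x^*$, $x'(0)=\bar{v}$, $x''(0)=h_2$. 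Such a curve is feasible for both $(GTRE)$ and $(GTR)$.

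Along any such curve, using \eqref{eq:1NecCond} and $f_1(x(t))\equiv 0$, a direct computation gives
\begin{equation*}
f_0(x(t))-f_0(x^*)=h(t)^T Q\,h(t),\qquad h(t):=x(t)-x^*.
\end{equation*}
With $h(t)=t\bar{v}+\tfrac{t^2}{2}h_2+O(t^3)$ and the hypothesis $\bar{v}^T Q\bar{v}=0$, this expands as $t^3\bar{v}^T Q h_2+\tfrac{t^4}{4}h_2^T Q h_2+O(t^5)$. Local minimality forces the cubic coefficient to vanish for every admissible $h_2$. A linear functional vanishing on the affine hyperplane $\{w:g^T w=-2\bar{v}^T A_1\bar{v}\}$ leaves only two possibilities: (A) $\bar{v}^T A_1\bar{v}=0$, in which case the hypothesis immediately yields $\bar{v}^T A_0\bar{v}=0$; or (B) $Q\bar{v}=0$.

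The main obstacle is Case~(B), where the cubic term vanishes identically and more work is needed. To rule out $\bar{v}^T A_1\bar{v}\ne 0$ here, I would pick a negative eigenvector $v_-$ of $Q$, whose existence is guaranteed by Lemma~\ref{th:exac-one-negative-eigenvalue}; condition \eqref{eq:2NecCond} forbids $v_-$ from lying in the tangent space $\{v:g^T v=0\}$, so $g^T v_-\ne 0$. Choosing the admissible second-order datum $h_2:=c\,v_-$ with $c:=-2\bar{v}^T A_1\bar{v}/(g^T v_-)\ne 0$ yields $h_2^T Q h_2=c^2\,v_-^T Q v_-<0$. Combined with $\bar{v}^T Q=0$, the expansion along the corresponding curve reduces to $f_0(x(t))-f_0(x^*)=\tfrac{t^4}{4}h_2^T Q h_2+O(t^5)$, which is strictly negative for small $t\ne 0$, contradicting local minimality. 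Hence $\bar{v}^T A_1\bar{v}=0$ and $\bar{v}^T A_0\bar{v}=0$ follow.
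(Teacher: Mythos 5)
Your argument is correct, but it is organized quite differently from the paper's proof. The paper argues by contradiction assuming $\bar{v}^{T}A_{1}\bar{v}\neq0$ and builds a completely explicit feasible arc inside the two-dimensional affine plane $x^{*}+\mathrm{span}\{v_{1},\bar{v}\}$ (with $v_{1}$ the negative eigenvector from Lemma \ref{th:exac-one-negative-eigenvalue}), solving the constraint $f_{1}=0$ as a quadratic in $\beta$ to get $\beta(\alpha)$ and then reading off the strict decrease $f_{0}-f_{0}(x^{*})=\sigma_{1}\bigl(\alpha^{2}+2\alpha\beta(\alpha)v_{1}^{T}\bar{v}\bigr)<0$ directly from the eigenvalue $\sigma_{1}<0$; no implicit function theorem, no case split, and no third/fourth-order expansion is needed. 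You instead derive higher-order necessary conditions along feasible curves with prescribed $2$-jet: the third-order condition $\bar{v}^{T}Q h_{2}=0$ for all admissible $h_{2}$ forces either $\bar{v}^{T}A_{1}\bar{v}=0$ (done) or $Q\bar{v}=0$, and in the latter case you kill the remaining possibility with a fourth-order term built from the negative eigendirection. Both proofs ultimately rest on the same ingredients — the exact quadratic identity $f_{0}(x)-f_{0}(x^{*})=(x-x^{*})^{T}Q(x-x^{*})$ on the feasible set via \eqref{eq:1NecCond}, the single negative eigenvalue, and the fact that \eqref{eq:2NecCond} forces the negative eigenvector out of the tangent hyperplane — so your route buys conceptual clarity (it explains \eqref{eq:barvA1barv=barvA0barv=0} as a third-order optimality condition) at the cost of extra machinery, while the paper's buys elementarity and self-containedness. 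Two small points you should tighten if you write this up: justify the existence of a feasible curve with prescribed $x'(0)=\bar{v}$ and $x''(0)=h_{2}$ (standard via graph coordinates from the implicit function theorem, using that the compatibility condition $\nabla f_{1}(x^{*})^{T}h_{2}=-2\bar{v}^{T}A_{1}\bar{v}$ is exactly the constraint obtained by differentiating $f_{1}(x(t))\equiv0$ twice), and note that before you know $Q\bar{v}=0$ the remainder in your expansion is only $O(t^{4})$ rather than $\tfrac{t^{4}}{4}h_{2}^{T}Qh_{2}+O(t^{5})$ — harmless, since the sign argument only uses the leading $t^{3}$ coefficient there.
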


\begin{proof}  As $x^{*}$ is a local-nonglobal minimizer of $(GTRE)$ or $(GTR)$,  Lemmas \ref{th:Nonglobal-LICQ}, and \ref{th:OptCond}  guarantee that, there exists a Lagrangian multiplier $\mu^{*}$ such that $f_1(x^{*})=0$, $\nabla f_1(x^{*})\ne0$, \eqref{eq:1NecCond} and \eqref{eq:2NecCond} hold.  And from Lemma \ref{th:exac-one-negative-eigenvalue}, the Hessian matrix $A_{0}+\mu^{*}A_{1}$ has exactly one negative eigenvalue, say $\sigma_{1}<0$, with a corresponding unit eigenvector $v_{1}$  and $v_{1}^{T}(A_{0}+\mu^{*}A_{1})v_{1} =\sigma_{1}<0$,  which has apparently that 
	\begin{equation}\label{eq:f1v1}
		\nabla f_{1}(x^{*})^{T}v_{1}\neq0\quad \mbox{(due to \eqref{eq:2NecCond}).}
	\end{equation}
	
	Since $\bar{v}^{T}A_{1}\bar{v}=0$  implies  $\bar{v}^{T}A_{0}\bar{v}=0$ from \eqref{eq:hatvTHhatv=0}, we suppose by contradiction that the vector $\bar{v}$ taken by \eqref{eq:hatvTHhatv=0} satisfies
	\begin{equation}\label{eq:bar{v}A1bar{v}}
		\bar{v}^{T}A_{1}\bar{v}\neq 0.
	\end{equation}
	Note that \eqref{eq:hatvTHhatv=0} and \eqref{eq:f1v1} imply that the two vectors $v_1$ and $\bar{v}$ are linearly independent.
	Define $$x(\alpha,\beta):=x^{*}+\alpha v_{1}+\beta \bar{v},$$ where $\alpha$, $\beta\in\mathcal{R}$. Consider the following equation in $\alpha$ and $\beta$
	\begin{equation} \label{eq:1}
		\begin{array}{lll}
			0=f_{1}\left(x(\alpha,\beta)\right)\\
			=f_{1}(x^{*})+\nabla f_{1}(x^{*})^{T}(\alpha v_{1}+\beta \bar{v})+(\alpha v_{1}+\beta \bar{v})^{T}A_{1}(\alpha v_{1}+\beta \bar{v})\vspace{1mm}\\
			= \nabla f_{1}(x^{*})^{T}v_{1}\alpha+ v_{1}^{T}A_{1}v_{1}\alpha^{2}+ 2v_{1}^{T}A_{1}\bar{v}\alpha\beta+ \bar{v}^{T}A_{1}\bar{v}\beta^{2}\vspace{1mm}\\
			=\left(\bar{v}^{T}A_{1}\bar{v}\right)\left(-\gamma_{1}\alpha-\gamma_{2}\alpha^{2}-2\gamma_{3}\alpha\beta+\beta^{2}\right),\\
			\Longrightarrow\,\beta^{2}-2\gamma_{3}\alpha\beta-\gamma_{1}\alpha-\gamma_{2}\alpha^{2}=0.
		\end{array}
	\end{equation}
	where $\gamma_{1}=-\dfrac{\nabla f_{1}(x^{*})^{T}v_{1}}{\bar{v}^{T}A_{1}\bar{v}}$, $\gamma_{2}=-\dfrac{v_{1}^{T}A_{1}v_{1}}{\bar{v}^{T}A_{1}\bar{v}}$,  $\gamma_{3}=-\dfrac{v_{1}^{T}A_{1}\bar{v}}{\bar{v}^{T}A_{1}\bar{v}}$.
	From \eqref{eq:f1v1} and \eqref{eq:bar{v}A1bar{v}} there is $\gamma_{1}\neq0$.
	Without loss of generality, we assume $\gamma_{1}>0$. Then one can always find a positive number $\delta>0$ such that
	\begin{equation*}
		\gamma_{1}\alpha+\gamma_{2}\alpha^{2}=(\gamma_{1}+\gamma_{2}\alpha)\alpha>0, \,\,\,\,\forall \alpha\in (0,\delta\,],
	\end{equation*}
	which guarantees that the equation \eqref{eq:1} has two distinct real roots about $\beta$
	\begin{equation*}
		\beta=\gamma_{3}\alpha\pm\sqrt{\gamma_{3}^2\alpha^2+\gamma_{1}\alpha+\gamma_{2}\alpha^{2}}, \,\,\,\,\forall \alpha\in (0,\delta];
	\end{equation*}
	one root is positive and the other is negative. So we define the function $\beta(\alpha)$
	$$
	\beta(\alpha):=
	\begin{cases}
		\gamma_{3}\alpha+\sqrt{\gamma_{3}^2\alpha^2+\gamma_{1}\alpha+\gamma_{2}\alpha^{2}} \,\,\mbox{ as }v_1^T\bar{v}\geq0,\vspace{1mm}\\
		\gamma_{3}\alpha-\sqrt{\gamma_{3}^2\alpha^2+\gamma_{1}\alpha+\gamma_{2}\alpha^{2}} \,\,\mbox{ as }v_1^T\bar{v}<0,
	\end{cases}
	\forall \alpha\in [0,\delta],
	$$
	such that 
	\begin{equation*}
		%\label{eq:beta(alpha)v1hatvgeq0}
		\beta(\alpha)v_1^T\bar{v}\geq0,\,\,\forall \alpha\in [0,\delta].
	\end{equation*}
	Therefore, one obtains the following relations: 
	\begin{equation*} %\label{eq:f0-f0}
		\begin{array}{lll}
			x\left(0,\beta(0)\right)=x^*\mbox{ and } x\left(\alpha,\beta(\alpha)\right)\ne x^*,\,\forall \alpha\in (0,\delta];\vspace{1mm}\\
			x\left(\alpha,\beta(\alpha)\right)\longrightarrow x^*\,\,\,(\mbox{ as } \alpha \longrightarrow 0^+);\vspace{1mm}\\
			f_{1}\left(x\left(\alpha,\beta(\alpha)\right)\right)\equiv0,\,\,\forall \alpha\in [0,\delta];\vspace{1mm}\\
			f_{0}\left(x\left(\alpha,\beta(\alpha)\right)\right)-f_{0}(x^{*})\vspace{1mm}\\
			= f_{0}\left(x\left(\alpha,\beta(\alpha)\right)\right)+\mu^{*}f_{1}\left(x\left(\alpha,\beta(\alpha)\right)\right) -\left( f_{0}(x^{*})+\mu^{*}f_{1}(x^{*})\right)\vspace{1mm}\\ 
			=\left(\alpha v_{1}+\beta(\alpha)\bar{v}\right)^{T}(A_{0} +\mu^{*}A_{1})\left(\alpha v_{1}+\beta(\alpha)\bar{v}\right) \vspace{1mm}\\
			=\sigma_{1}\left(\alpha^{2}+2\alpha\beta(\alpha)v_1^T\bar{v}\right)<0,\,\,\forall \alpha\in (0,\delta];
		\end{array}
	\end{equation*}
	which contradicts with $x^*$ being a local-nonglobal minimizer. Thus the proof is completed.
\end{proof}

\begin{remark}\label{rmk:s2-cexmpl-0} 
	Such the nonzero vector $\bar{v}$ as shown in the above lemma may indeed appear in practice.
	Consider the following problem
	\begin{equation*} %\label{eq:s2example-3}
		\min\{x_{1}x_{2}-x_{2}^{2}\,\,|\,\,-x_{1}x_{2}=(\mbox{or}\leq)\, 0\}. 
	\end{equation*}
	It has local-nonglobal minimizers $x^{*}=[\alpha,0]^{T}$ for all $\alpha\ne0$ with $f_1(x^*)=0$ and $\nabla f_1(x^*)=[0,-\alpha]^T$. These local-nonglobal minimizers have the same optimal Lagrangian multiplier $\mu^{*}=1$. The corresponding Lagrangian Hessian matrix is  
	\begin{equation*}
		A_{0}+\mu^{*}A_{1}=
		\begin{bmatrix}
			0 & \frac{1}{2}\\
			\frac{1}{2} & -1
		\end{bmatrix}
		+
		\begin{bmatrix}
			0 & -\frac{1}{2}\\
			-\frac{1}{2} & 0
		\end{bmatrix}
		=\begin{bmatrix}
			0 & 0\\
			0 & -1
		\end{bmatrix}, 
	\end{equation*}
	which has a zero eigenvalue with a unit eigenvector $\bar{v}=[1,0]^T$.  And $\bar{v}$ satisfies
	$$
	\nabla f_1(x^*)^T\bar{v}=\bar{v}^TA_0\bar{v}=\bar{v}^TA_1\bar{v}=0.
	$$
	This example also shows that, if $f_0$ and $f_1$ are both homogeneous but $c_1=0$, $(GTRE)$ and $(GTR)$ may still have local-nonglobal minimizers.
\end{remark}

\begin{remark}\label{rmk:s2-cexmpl-1}
	The converse proposition of Lemma \ref{th:barvTA1barv=barvTA0barv=0} is not true. Consider a counterexample as follows: 
	\begin{equation} \label{eq:s2example-4}
		\min\{f_0(x)=2x_{1}x_{2}-2x_{2}x_{3}+x_{3}^{2}-2x_{1}\,\,|\,\,f_1(x)=2x_{2}x_{3}+2x_{1}=(\mbox{or}\leq)\, 0\}. 
	\end{equation}
	Take $x^*=[0,0,0]^T$ and $\mu^*=1$. One can easily verify that the point $x^*$ and the multiplier $\mu^*$ satisfy $f_1(x^*)=0$ and $\nabla f_1(x^*)=[2,0,0]^T\ne0$, the necessary optimality conditions \eqref{eq:1NecCond} and \eqref{eq:2NecCond}, where
	\begin{equation*}
		A_{0}+\mu^{*}A_{1}=
		\begin{bmatrix}
			0 &1&0\\
			1&0 &-1\\
			0&-1&1
		\end{bmatrix}
		+
		\begin{bmatrix}
			0 &0&0\\
			0&0 &1\\
			0&1&0
		\end{bmatrix}
		=\begin{bmatrix}
			0 &1&0\\
			1&0 &0\\
			0&0&1
		\end{bmatrix}.
	\end{equation*} 
	And all the vectors that satisfy \eqref{eq:hatvTHhatv=0} are just $\bar{v}=[0,\alpha,0]^T$ $(\forall \alpha\in\mathcal{R})$, which certainly satisfy
	\eqref{eq:barvA1barv=barvA0barv=0} too. However, $x^*$ is not a local minimizer of \eqref{eq:s2example-4} yet, because the point pencil  $x(t)=[-t^3,\,t,\,t^2\,]^T$ $(t\in\mathcal{R})$ satisfy $f_1(x(t))=0$, $x(0)=x^*$ and $f_0(x(t))=-t^4<0=f_0(x^*)$  $\forall t\ne0$. 
\end{remark} 

\begin{remark}\label{rmk:s2-cexmpl-2}
	By slightly redeveloping the problem \eqref{eq:s2example-4}, one can also verify that a conjecture presented by Xia's team \cite[Conjecture 5.10]{xia2023} is not true. Observe the following example.
	\begin{equation}\label{eg:s2conj_eg}
		\begin{array}{lll}
			&\min & f_0(x)=x^TA_0x=2x_{1}x_{2}-2x_{2}x_{3}+x_{3}^{2}-2x_{1}x_4\\
			&\mbox{\rm s.t.} & f_1(x)=x^TA_1x-1=x_1^2+x_2^2+x_3^2+x_4^2-1=0,\\
			& & f_2(x)=x^TA_2x-1=2x_2x_3+2x_1x_4+x_1^2+x_2^2+x_3^2+x_4^2-1\leq0.
		\end{array}
	\end{equation}
	Firstly, the problem \eqref{eg:s2conj_eg} satisfies the Slater's condition and the joint definiteness condition i.e. $\exists\, \mu_0,\, \mu_1,\, \mu_2\in\mathcal{R}\,\, \mbox{s.t.}\,\, \mu_0A_0+\mu_1A_1+\mu_2A_2\succ0$. Moreover, the point $x^*=[0,0,0,1]^T$ and the multiplier pair $(\mu^*_1,\mu^*_2)=(-1,1)$ satisfy $f_1(x^*)=0=f_2(x^*)$ and the necessary optimality conditions
	\begin{equation*}
		\begin{array}{lll}
			(A_0+\mu^*_1 A_1+\mu^*_2 A_2)x^*=0
			\mbox{ and}\\
			v^T(A_0+\mu^*_1 A_1+\mu^*_2 A_2)v\geq0,\forall v\in\mathcal{R}^4 
			\mbox{ satisfying }
			v^TA_1x^*=0=v^TA_2x^*, \mbox{ where} \vspace{1mm}\\
			A_0+\mu^*_1 A_1+\mu^*_2 A_2=
			\begin{bmatrix}
				0 &1&0&0\\
				1&0 &0&0\\
				0&0&1&0\\
				0&0&0&0
			\end{bmatrix}.
		\end{array}
	\end{equation*}
	And there is a nonzero vector $\bar{v}=[0,1,0,0]^T$ such that $\bar{v}^TA_1x^*=0=\bar{v}^TA_2x^*$ and $\bar{v}^T(A_0+\mu^*_1 A_1+\mu^*_2 A_2)\bar{v}=0=\bar{v}^T(A_2-A_1)\bar{v}$.	
	However, $x^*$ is not a local minimizer of \eqref{eg:s2conj_eg} yet, because  the point pencil
	\begin{equation*}
		x(t)=\dfrac{1}{\sqrt{t^6+t^2+t^4+1}}\,[-t^3,t,t^2,1]^T,\,\,\forall t\in\mathcal{R}
	\end{equation*}
	satisfy $f_1(x(t))=0=f_2(x(t))$, $x(0)=x^*$ and $f_0(x(t))=-t^4/({t^6+t^2+t^4+1})<0=f_0(x^*)$ $\forall t\neq0$.
\end{remark}

By using Lemma \ref{th:barvTA1barv=barvTA0barv=0}, we obtain an important property involving the strict local-nonglobal minimizers of $(GTRE)$ and $(GTR)$ under no assumptions.

\begin{lemma} \label{th:strict-nonglobal-suf-cond}
	If $x^{*}$ is a strict local-nonglobal minimizer of $(GTRE)$ or $(GTR)$, then there is  a Lagrangian multiplier $\mu^{*}$ such that  the sufficient optimality conditions \eqref{eq:1NecCond} and \eqref{eq:2SufCond} hold.
\end{lemma}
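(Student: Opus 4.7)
The plan is to argue by contradiction. Since $x^*$ is a local-nonglobal minimizer, Lemmas \ref{th:Nonglobal-LICQ} and \ref{th:OptCond} already guarantee the existence of a multiplier $\mu^*$ satisfying the first-order condition \eqref{eq:1NecCond} together with the necessary second-order condition \eqref{eq:2NecCond}; by Lemma \ref{th:multiplier-uniqueness} this $\mu^*$ is the unique such multiplier. So the only thing to prove is the strict inequality \eqref{eq:2SufCond} in the tangent space. Suppose, to the contrary, that there is a nonzero $\bar{v}\in\mathcal{R}^n$ with $\nabla f_1(x^*)^T\bar{v}=0$ and $\bar{v}^T(A_0+\mu^*A_1)\bar{v}\le 0$. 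Combined with \eqref{eq:2NecCond} this forces $\bar{v}^T(A_0+\mu^*A_1)\bar{v}=0$, which puts $\bar v$ into the setting of Lemma \ref{th:barvTA1barv=barvTA0barv=0}.

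From Lemma \ref{th:barvTA1barv=barvTA0barv=0} I would then conclude $\bar{v}^T A_0\bar{v}=\bar{v}^T A_1\bar{v}=0$. Next I consider the affine line $x(t):=x^*+t\bar{v}$ for $t\in\mathcal{R}$. A direct expansion gives
\begin{equation*}
f_1(x(t))=f_1(x^*)+t\nabla f_1(x^*)^T\bar{v}+t^2\bar{v}^TA_1\bar{v}=0,
\end{equation*}
using $f_1(x^*)=0$, $\nabla f_1(x^*)^T\bar{v}=0$, and $\bar{v}^TA_1\bar{v}=0$. Hence the whole line is feasible for $(GTRE)$ and therefore also for $(GTR)$. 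Similarly, because $\nabla f_0(x^*)=-\mu^*\nabla f_1(x^*)$ by \eqref{eq:1NecCond}, we have $\nabla f_0(x^*)^T\bar{v}=0$, and together with $\bar{v}^TA_0\bar{v}=0$ we obtain $f_0(x(t))\equiv f_0(x^*)$.

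This identity contradicts the assumption that $x^*$ is a \emph{strict} local minimizer: for every sufficiently small $t\ne 0$, $x(t)\ne x^*$ is feasible and attains the same objective value $f_0(x^*)$. Consequently no such $\bar v$ can exist, and \eqref{eq:2SufCond} holds with the multiplier $\mu^*$ supplied by the necessary conditions. The only subtlety in the argument is the step invoking Lemma \ref{th:barvTA1barv=barvTA0barv=0}, which provides the crucial simultaneous vanishing $\bar{v}^TA_0\bar{v}=\bar{v}^TA_1\bar{v}=0$ needed to trivialize $f_0$ and $f_1$ along the line through $x^*$; without that lemma the contradiction would not be immediate. Everything else reduces to checking the Taylor expansions of the two quadratics on the line $x^*+t\bar v$.
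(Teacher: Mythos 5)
Your proposal is correct and follows essentially the same route as the paper's own proof: reduce to the necessary conditions via Lemmas \ref{th:Nonglobal-LICQ}, \ref{th:OptCond} and \ref{th:multiplier-uniqueness}, then argue by contradiction using Lemma \ref{th:barvTA1barv=barvTA0barv=0} to get $\bar{v}^TA_0\bar{v}=\bar{v}^TA_1\bar{v}=0$ and show $f_1\equiv 0$ and $f_0\equiv f_0(x^*)$ along the line $x^*+t\bar{v}$, contradicting strictness.
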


\begin{proof}
	As $x^{*}$ is a strict local-nonglobal minimizer of $(GTRE)$ or $(GTR)$, it holds that $f_1(x^*)=0$ and $\nabla f_1(x^*)\ne0$, and there exists a unique Lagrangian multiplier $\mu^{*}$ such that the necessary optimality conditions  \eqref{eq:1NecCond} and \eqref{eq:2NecCond} hold, due to Lemma \ref{th:OptCond}, Lemma \ref{th:Nonglobal-LICQ} and Lemma \ref{th:multiplier-uniqueness}. We suppose by contradiction that \eqref{eq:2SufCond} is violated, that is, there is a nonzero vector $\bar{v}\in\mathcal{R}^n$ that satisfies \eqref{eq:hatvTHhatv=0}. From Lemma \ref{th:barvTA1barv=barvTA0barv=0}, $\bar{v}$ also satisfies \eqref{eq:barvA1barv=barvA0barv=0}. Then for all $\alpha\in\mathcal{R}$, one has 
	\begin{equation*}
		\begin{cases}
			f_1(x^*+\alpha\bar{v})=f_1(x^*)+ \alpha(\nabla f_1(x^*))^T\bar{v}+\alpha^2\bar{v}^TA_1\bar{v} =0+0+0=0,\\
			f_0(x^*+\alpha\bar{v})=f_0(x^*)- \alpha\mu^*(\nabla f_1(x^*))^T\bar{v}+\alpha^2\bar{v}^TA_0\bar{v} =f_0(x^*)-0+0=f_0(x^*),
		\end{cases}
	\end{equation*}
	which contradicts with the assumption that $x^*$ is a strict local-nonglobal minimizer.  
\end{proof}

By the above lemma, one can reveal another interesting property about the strict local-nonglobal minimizers of $(GTRE)$ and $(GTR)$, which is stated as follows. 

\begin{lemma} \label{th:no-zero-eigenvalues}
	If $x^{*}$ is a strict local-nonglobal minimizer of $(GTRE)$ or $(GTR)$ and  $\mu^{*}$ is the corresponding Lagrangian multiplier to $x^{*}$, then the matrix  $A_{0}+\mu^{*}A_{1}$ is nonsingular.
\end{lemma}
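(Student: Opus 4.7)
The plan is to argue by contradiction using the sufficient optimality conditions guaranteed by Lemma \ref{th:strict-nonglobal-suf-cond} together with the one-negative-eigenvalue structure from Lemma \ref{th:exac-one-negative-eigenvalue}. Since $x^{*}$ is a strict local-nonglobal minimizer, Lemma \ref{th:strict-nonglobal-suf-cond} supplies a Lagrangian multiplier $\mu^{*}$ satisfying the first-order condition \eqref{eq:1NecCond} together with the strict tangential positivity \eqref{eq:2SufCond}. Lemma \ref{th:exac-one-negative-eigenvalue} additionally tells us that $A_{0}+\mu^{*}A_{1}$ has exactly one negative eigenvalue $\sigma_{1}<0$, with a corresponding unit eigenvector $v_{1}$.

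Suppose for contradiction that $A_{0}+\mu^{*}A_{1}$ is singular; choose a nonzero $v_{0}$ with $(A_{0}+\mu^{*}A_{1})v_{0}=0$. Since $v_{0}$ and $v_{1}$ are eigenvectors of the symmetric matrix $A_{0}+\mu^{*}A_{1}$ for distinct eigenvalues, they are linearly independent (and in fact orthogonal). The sufficient condition \eqref{eq:2SufCond} applied to $v_{0}$ forces $\nabla f_{1}(x^{*})^{T}v_{0}\neq 0$, since otherwise $v_{0}^{T}(A_{0}+\mu^{*}A_{1})v_{0}=0$ would contradict strict positivity on the tangent space. The same reasoning applied to $v_{1}$, using $v_{1}^{T}(A_{0}+\mu^{*}A_{1})v_{1}=\sigma_{1}<0$, forces $\nabla f_{1}(x^{*})^{T}v_{1}\neq 0$.

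Now I would define the combined vector
\[
w := v_{1}-\frac{\nabla f_{1}(x^{*})^{T}v_{1}}{\nabla f_{1}(x^{*})^{T}v_{0}}\,v_{0},
\]
which is well defined by the previous step, nonzero by the linear independence of $v_{0}$ and $v_{1}$, and lies in the tangent space $\{v\in\mathcal{R}^{n}\,|\,\nabla f_{1}(x^{*})^{T}v=0\}$ by direct computation. Because $(A_{0}+\mu^{*}A_{1})v_{0}=0$, expanding $w^{T}(A_{0}+\mu^{*}A_{1})w$ kills both the cross term and the $v_{0}$-quadratic term, leaving
\[
w^{T}(A_{0}+\mu^{*}A_{1})w = v_{1}^{T}(A_{0}+\mu^{*}A_{1})v_{1} = \sigma_{1}<0,
\]
which directly contradicts the strict tangential positivity \eqref{eq:2SufCond}. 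Hence $A_{0}+\mu^{*}A_{1}$ must be nonsingular.

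The proof is essentially a one-step linear-algebra contradiction; the only mildly delicate point is verifying that both $\nabla f_{1}(x^{*})^{T}v_{0}$ and $\nabla f_{1}(x^{*})^{T}v_{1}$ are nonzero so that the combination $w$ is legitimate, but this is forced by \eqref{eq:2SufCond} itself and is not an obstacle. I do not expect to need Lemma \ref{th:barvTA1barv=barvTA0barv=0} here, since the sufficient condition delivered by Lemma \ref{th:strict-nonglobal-suf-cond} is strong enough on its own.
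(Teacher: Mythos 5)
Your proof is correct and is essentially the paper's own argument: both take the negative eigenvector $v_1$ from Lemma \ref{th:exac-one-negative-eigenvalue} and a null vector of $A_0+\mu^*A_1$ (your $v_0$, the paper's zero-eigenvalue eigenvector $v_2$), use Lemma \ref{th:strict-nonglobal-suf-cond} to force $\nabla f_1(x^*)^Tv_0\neq0$, and form the same combination $w$ in the tangent space with $w^T(A_0+\mu^*A_1)w=\sigma_1<0$, contradicting \eqref{eq:2SufCond}. The extra observation that $\nabla f_1(x^*)^Tv_1\neq0$ is harmless but not needed.
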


\begin{proof} 	
	From Lemma \ref{th:exac-one-negative-eigenvalue}, the matrix $A_{0}+\mu^{*}A_{1}$ has exactly one  negative eigenvalue, say $\sigma_{1}<0$, with a unit eigenvector $v_{1}$. Suppose on the contrary that $A_{0}+\mu^{*}A_{1}$ has a zero eigenvalue, say $\sigma_{2}=0$ with one other unit eigenvector $v_2$. It is apparent that $v_2$ are linearly independent of $v_1$. Furthermore, from  Lemma \ref{th:strict-nonglobal-suf-cond}, one yields
	\begin{equation*}
		\nabla f_{1}(x^{*})^{T}v_2\neq0.
	\end{equation*}
	Define a vector
	\begin{equation*}
		\hat{v}:=v_1-\dfrac{\nabla f_{1}(x^{*})^{T}v_1}{\nabla f_{1}(x^{*})^{T}v_2}\,v_2\ne0.
	\end{equation*}
	Then one obtains that
	\begin{equation*}
		\hat{v}^T\left(A_{0}+ \mu^{*}A_{1} \right)\hat{v}=\sigma_1<0,\quad 
		\nabla f_{1}(x^{*})^{T}\hat{v}=0  \mbox{ and } \hat{v}\ne0,
	\end{equation*}
	which is a contradiction.
\end{proof} 

Now we are ready to present  two key theorems of this section, which characterize whether or not a point $x^*$ is a strict local-nonglobal minimizer of $(GTRE)$ (or $(GTR)$) under no assumptions.

\begin{theorem}\label{th:GTRE-strict-nonglobal-sufnec}
	A vector  $x^{*}$ is a strict local-nonglobal minimizer of $(GTRE)$, if and only if the following three conditions hold: {\bf (i)} $f_{1}(x^{*})=0$; {\bf (ii)}
	there exists a Lagrangian multiplier  $\mu^{*}\in\mathcal{R}$ such that the sufficient optimality conditions \eqref{eq:1NecCond}  and \eqref{eq:2SufCond} hold with $A_{0}+\mu^{*}A_{1}\not\succeq0$; {\bf (iii)}  $A_1\ne0$.
\end{theorem}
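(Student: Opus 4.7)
The plan is to prove the two directions of the equivalence separately, using the necessary/sufficient optimality conditions of Lemma~\ref{th:OptCond} together with the More-type global characterization in Lemma~\ref{th:More1993-3.2} and several preparatory lemmas already established in this section.

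\textbf{Necessity.} Assume $x^*$ is a strict local-nonglobal minimizer of $(GTRE)$. Then (i) $f_1(x^*)=0$ is automatic by feasibility, and (iii) $A_1\neq 0$ follows directly from Lemma~\ref{th:GTRE-A1ne0}. For (ii), Lemma~\ref{th:Nonglobal-LICQ} gives $\nabla f_1(x^*)\ne 0$, and then Lemma~\ref{th:OptCond}(1) yields a multiplier $\mu^*$ satisfying \eqref{eq:1NecCond} and \eqref{eq:2NecCond}; Lemma~\ref{th:strict-nonglobal-suf-cond} upgrades \eqref{eq:2NecCond} to the strict form \eqref{eq:2SufCond}. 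It remains to show $A_0+\mu^* A_1\not\succeq 0$: since $\nabla f_1(x^*)\ne 0$ and $f_1(x^*)=0$, Lemma~\ref{th:More1993-3.1} guarantees the two-side Slater condition \eqref{eq:twosideslater}, and since $A_1\ne 0$, Lemma~\ref{th:More1993-3.2} tells us that $A_0+\mu^* A_1\succeq 0$ would force $x^*$ to be a global minimizer, contradicting nonglobality.

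\textbf{Sufficiency.} Conversely, assume (i)--(iii). Condition (ii) first forces $\nabla f_1(x^*)\ne 0$: otherwise the tangent space in \eqref{eq:2SufCond} would be all of $\mathcal{R}^n$ and \eqref{eq:2SufCond} would give $A_0+\mu^*A_1\succ 0$, contradicting $A_0+\mu^*A_1\not\succeq 0$. Hence Lemma~\ref{th:OptCond}(2) applies and $x^*$ is a strict local minimizer of $(GTRE)$. To rule out globality, invoke Lemma~\ref{th:More1993-3.1} (using $f_1(x^*)=0$ and $\nabla f_1(x^*)\ne 0$) to obtain the two-side Slater condition, then Lemma~\ref{th:More1993-3.2} (using $A_1\ne 0$ from (iii)) shows that any global minimizer would have to satisfy $A_0+\mu^*A_1\succeq 0$; by uniqueness of the multiplier at a regular point (Lemma~\ref{th:multiplier-uniqueness}), the $\mu^*$ we possess is the only candidate, and condition (ii) precludes $A_0+\mu^*A_1\succeq 0$. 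Therefore $x^*$ is not a global minimizer, i.e.\ it is a strict local-nonglobal minimizer.

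\textbf{Expected main obstacle.} The proof is largely an assembly of the lemmas already proved in the section. The only subtle step is justifying $\nabla f_1(x^*)\ne 0$ in the sufficiency direction (so that the tangent space in \eqref{eq:2SufCond} is a proper subspace and so that Lemma~\ref{th:More1993-3.1} can be applied to obtain the two-side Slater condition needed to invoke Lemma~\ref{th:More1993-3.2}); this is handled by the contradiction argument above using the hypothesis $A_0+\mu^*A_1\not\succeq 0$. The role of condition (iii), needed precisely because Lemma~\ref{th:More1993-3.2} requires $A_1\ne 0$, should also be stated explicitly.
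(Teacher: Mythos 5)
Your proof is correct and follows essentially the same route as the paper: both directions are assembled from Lemmas \ref{th:Nonglobal-LICQ}, \ref{th:GTRE-A1ne0}, \ref{th:strict-nonglobal-suf-cond}, \ref{th:multiplier-uniqueness}, \ref{th:More1993-3.1} and \ref{th:More1993-3.2}, and your treatment of the key subtlety in the sufficiency direction (deducing $\nabla f_1(x^*)\ne0$ from $A_0+\mu^*A_1\not\succeq0$ via \eqref{eq:2SufCond}) is exactly the paper's argument. The only cosmetic difference is in necessity, where the paper obtains $A_{0}+\mu^{*}A_{1}\not\succeq0$ directly from Lemma \ref{th:exac-one-negative-eigenvalue} (exactly one negative eigenvalue), while you derive it by contradiction through Lemmas \ref{th:More1993-3.1} and \ref{th:More1993-3.2}; both are valid.
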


\begin{proof}
	{\bf ``$\Longrightarrow$". } As  $x^{*}$ is a strict local-nonglobal minimizer of $(GTRE)$, the three conditions follow directly from Lemmas \ref{th:Nonglobal-LICQ}, \ref{th:OptCond}, \ref{th:exac-one-negative-eigenvalue}, \ref{th:strict-nonglobal-suf-cond} and  \ref{th:GTRE-A1ne0}.
	
	{\bf ``$\Longleftarrow$". } Assume that the three conditions (i), (ii) and (iii) hold. According to Lemma \ref{th:OptCond}(2), the conditions (i) and (ii) guarantee that $x^*$ is a strict local minimizer of $(GTRE)$. So one needs only to verify that $x^{*}$ is not a global minimizer.  In fact, the condition (ii) ensures $\nabla f_1(x^*)\ne0$ because otherwise then \eqref{eq:2SufCond} implies $A_{0}+\mu^{*}A_{1}\succ0$, which contradicts with $A_{0}+\mu^{*}A_{1}\not\succeq0$. Therefore, the function $f_1$ satisfies the two-side Slater's condition and the current $\mu^*$ is the unique optimal Lagrangian multiplier corresponding to $x^{*}$,  due to Lemmas \ref{th:More1993-3.1} and  \ref{th:multiplier-uniqueness}. Then, from  the condition (iii), $A_{0}+\mu^{*}A_{1}\not\succeq0$ and Lemma \ref{th:More1993-3.2}, one obtains that $x^*$ is not a global minimizer. The proof is completed.
\end{proof}

\begin{theorem}\label{th:GTR-strict-nonglobal-sufnec}
	A vector  $x^{*}$ is a strict local-nonglobal minimizer of $(GTR)$, if and only if the following two conditions hold: {\bf (i)} $f_{1}(x^{*})=0$; {\bf (ii)} there exists a Lagrangian multiplier $\mu^{*}>0$, such that the sufficient optimality conditions \eqref{eq:1NecCond} and \eqref{eq:2SufCond} hold with  $A_{0}+\mu^{*}A_{1}\not\succeq0$. 
\end{theorem}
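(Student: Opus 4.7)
The plan is to mirror the structure of the proof of Theorem \ref{th:GTRE-strict-nonglobal-sufnec}, but for the inequality-constrained problem $(GTR)$. The key structural difference from the equality case is that we now require $\mu^*>0$ (so the constraint is active in the usual Lagrangian sense) and we replace Lemma \ref{th:More1993-3.2} by Lemma \ref{th:More1993-3.4} in the test for global optimality. Note that the condition ``$A_1\neq 0$'' is no longer needed as a separate hypothesis, since Remark \ref{rmk:s2-cexmpl-linear-GTR} shows that $(GTR)$ may have local-nonglobal minimizers even when $A_1=0$.

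For the direction ``$\Longrightarrow$'', I would assume $x^*$ is a strict local-nonglobal minimizer of $(GTR)$ and assemble the conditions from results already in the paper: Lemma \ref{th:Nonglobal-LICQ} gives $f_1(x^*)=0$ and $\nabla f_1(x^*)\neq 0$, which is condition (i); Lemma \ref{th:OptCond}(3) yields a multiplier $\mu^*\geq 0$ satisfying \eqref{eq:1NecCond}, Lemma \ref{th:GTR-nonglobal-lambda>0} upgrades this to $\mu^*>0$, Lemma \ref{th:strict-nonglobal-suf-cond} upgrades \eqref{eq:2NecCond} to the strict form \eqref{eq:2SufCond}, and Lemma \ref{th:exac-one-negative-eigenvalue} provides a (unique) negative eigenvalue of $A_0+\mu^*A_1$, giving $A_0+\mu^*A_1\not\succeq 0$. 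This completes (ii).

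For the direction ``$\Longleftarrow$'', I would start from conditions (i) and (ii) and apply Lemma \ref{th:OptCond}(4) directly: since $f_1(x^*)=0$, $\mu^*>0$, and both \eqref{eq:1NecCond} and \eqref{eq:2SufCond} are satisfied, $x^*$ is a strict local minimizer of $(GTR)$. It remains only to rule out that $x^*$ is a global minimizer. Here I first observe that \eqref{eq:2SufCond} forces $\nabla f_1(x^*)\neq 0$: otherwise the side condition $\nabla f_1(x^*)^Tv=0$ would be vacuous and \eqref{eq:2SufCond} would force $A_0+\mu^*A_1\succ 0$, directly contradicting the hypothesis $A_0+\mu^*A_1\not\succeq 0$. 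With $\nabla f_1(x^*)\neq 0$ in hand and $f_1(x^*)=0$, Lemma \ref{th:More1993-3.1} guarantees that $f_1$ satisfies the two-side Slater's condition \eqref{eq:twosideslater}, in particular the one-sided Slater's condition \eqref{eq:salter-cons}. Moreover, Lemma \ref{th:multiplier-uniqueness} tells us that $\mu^*$ is the unique multiplier associated with $x^*$.

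The potentially delicate point is the final step: suppose for contradiction that $x^*$ were a global minimizer of $(GTR)$. Since the Slater's condition \eqref{eq:salter-cons} holds, Lemma \ref{th:More1993-3.4} would then supply some multiplier $\bar\mu\geq 0$ with $\bar\mu f_1(x^*)=0$, $\nabla f_0(x^*)+\bar\mu\nabla f_1(x^*)=0$ and $A_0+\bar\mu A_1\succeq 0$. By the uniqueness of the multiplier from Lemma \ref{th:multiplier-uniqueness}, we would have $\bar\mu=\mu^*$, yielding $A_0+\mu^*A_1\succeq 0$ and contradicting hypothesis (ii). Therefore $x^*$ is not a global minimizer, and the proof is complete. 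I expect this last implication to be the main step to write out carefully, since it is the only place where the inequality-constraint-specific tool (Lemma \ref{th:More1993-3.4}) really does work that cannot be imported verbatim from the proof of Theorem \ref{th:GTRE-strict-nonglobal-sufnec}.
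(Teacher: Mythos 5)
Your proposal is correct and matches the paper's intended argument: the paper itself proves this theorem by saying it ``directly follows that of Theorem~\ref{th:GTRE-strict-nonglobal-sufnec}, except Lemmas~\ref{th:More1993-3.4} and~\ref{th:GTR-nonglobal-lambda>0} need to be utilized additionally,'' which is exactly the substitution you carry out, including the observation that \eqref{eq:2SufCond} together with $A_0+\mu^*A_1\not\succeq0$ forces $\nabla f_1(x^*)\ne0$ and hence Slater's condition and multiplier uniqueness. Your final contradiction step via Lemma~\ref{th:More1993-3.4} is precisely how the paper's template rules out global optimality, so there is nothing to add.
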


\begin{proof}
	The proof process can directly follow that of Theorem \ref{th:GTRE-strict-nonglobal-sufnec}, except Lemmas \ref{th:More1993-3.4} and \ref{th:GTR-nonglobal-lambda>0} need to be  utilized additionally. So we omit it.
\end{proof}

Now we introduce the concept of the joint definiteness of two matrices.
\begin{definition} \label{df:Joint-Definite}
	We say that two symmetric matrices $A_0$ and $A_1$ are jointly definite if $\exists\mu_1\in\mathcal{R}$ such that 
	\begin{equation} \label{eq:joint-definite}
		\begin{array}{cll}
			\mbox{ either }& A_0+\mu_1A_1\succ0& (\mbox{jointly positive definite})\vspace{1mm}\\
			\mbox{ or } &A_0+\mu_1A_1\prec0& (\mbox{jointly negative definite}).
		\end{array}
	\end{equation}  
\end{definition}
Note that Definition \ref{df:Joint-Definite} can be equivalently stated as follows: we say that $A_0$ and $A_1$ are jointly definite if 
\begin{equation} \label{eq:joint-definite-1}
	\exists\,\mu_{0}, \mu_{1}\in\mathcal{R}\,\, \mbox{ such that }\,\,\mu_{0}A_{0}+\mu_{1}A_{1} \succ 0.
\end{equation}

The following theorem shows that, under only the assumption that $A_0$ and $A_1$ are jointly definite, all the local-nonglobal minimizers of $(GTRE)$ and $(GTR)$ must be strict, and so they all satisfy the sufficient optimality conditions \eqref{eq:1NecCond}  and \eqref{eq:2SufCond} according to Theorems  \ref{th:GTRE-strict-nonglobal-sufnec} and \ref{th:GTR-strict-nonglobal-sufnec}. Similar results  (\cite[Theorem 4.8]{xia2023}, \cite[Theorems 3.2 and 3.3]{xia2021}) were presented by Xia's team under stricter assumptions than ours.

\begin{theorem} \label{th:nonstrict-nonglobal-zero}
	Suppose that $A_0$ and $A_1$ are jointly definite. Then $(GTRE)$ and $(GTR)$ have no nonstrict local-nonglobal minimizers.
\end{theorem}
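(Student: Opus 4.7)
The plan is to argue by contradiction, reducing the statement to a direct consequence of Lemma \ref{th:barvTA1barv=barvTA0barv=0} and the definition of joint definiteness. Suppose $x^{*}$ is a nonstrict local-nonglobal minimizer of $(GTRE)$ or $(GTR)$. By Lemma \ref{th:Nonglobal-LICQ}, one has $f_1(x^{*})=0$ and $\nabla f_1(x^{*})\ne 0$, so by Lemma \ref{th:OptCond} there exists a Lagrangian multiplier $\mu^{*}$ (with $\mu^{*}>0$ in the $(GTR)$ case, thanks to Lemma \ref{th:GTR-nonglobal-lambda>0}) satisfying the necessary optimality conditions \eqref{eq:1NecCond} and \eqref{eq:2NecCond}.

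Next, because $x^{*}$ is \emph{nonstrict}, the sufficient optimality condition \eqref{eq:2SufCond} must fail at $(x^{*},\mu^{*})$; otherwise Lemma \ref{th:OptCond}(2) in the $(GTRE)$ case, or Lemma \ref{th:OptCond}(4) in the $(GTR)$ case (using $\mu^{*}>0$), would force $x^{*}$ to be a strict local minimizer. Consequently there is a nonzero vector $\bar v\in\mathcal{R}^n$ with $\nabla f_1(x^{*})^T\bar v=0$ and $\bar v^T(A_0+\mu^{*}A_1)\bar v\le 0$. Combined with \eqref{eq:2NecCond}, this inequality is in fact an equality, so the hypotheses \eqref{eq:hatvTHhatv=0} of Lemma \ref{th:barvTA1barv=barvTA0barv=0} are satisfied. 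Applying that lemma yields $\bar v^T A_0\bar v=\bar v^T A_1\bar v=0$.

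Finally, invoke the joint definiteness characterization \eqref{eq:joint-definite-1}: choose $\mu_0,\mu_1\in\mathcal{R}$ with $\mu_0 A_0+\mu_1 A_1\succ 0$. Then $\bar v^T(\mu_0 A_0+\mu_1 A_1)\bar v>0$ because $\bar v\ne 0$, while at the same time $\bar v^T(\mu_0 A_0+\mu_1 A_1)\bar v=\mu_0\cdot 0+\mu_1\cdot 0=0$. This contradiction proves that no such nonstrict local-nonglobal minimizer can exist.

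There is no real obstacle once Lemma \ref{th:barvTA1barv=barvTA0barv=0} is in hand: that lemma is doing the heavy lifting by forcing any "flat direction" $\bar v$ in the critical cone to be simultaneously $A_0$-isotropic and $A_1$-isotropic, and joint definiteness immediately forbids such a direction. The only care required is to handle $(GTRE)$ and $(GTR)$ in a unified way when ruling out \eqref{eq:2SufCond}, which is achieved by noting that $\mu^{*}>0$ is automatic for $(GTR)$.
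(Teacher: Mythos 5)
Your proposal is correct and follows essentially the same route as the paper's proof: obtain a nonzero critical-cone direction $\bar v$ from the nonstrictness of $x^{*}$, apply Lemma \ref{th:barvTA1barv=barvTA0barv=0} to conclude $\bar v^{T}A_{0}\bar v=\bar v^{T}A_{1}\bar v=0$, and contradict the joint definiteness condition \eqref{eq:joint-definite-1}. The only difference is that you spell out explicitly why nonstrictness forces the sufficient condition \eqref{eq:2SufCond} to fail (via Lemma \ref{th:OptCond}(2)/(4) and $\mu^{*}>0$ for $(GTR)$), a step the paper leaves implicit.
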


\begin{proof}
	As $A_0$ and $A_1$ are jointly definite, the formula \eqref{eq:joint-definite-1} holds. We suppose by contradiction that $(GTRE)$ (or $(GTR)$) has one nonstrict local-nonglobal minimizer $x^*$. By using Lemmas \ref{th:OptCond} and \ref{th:barvTA1barv=barvTA0barv=0}, there exists a Lagrangian multiplier $\mu^*$ and a nonzero vector $\bar{v}\ne0$ such that $\left(\nabla f_0(x^*)+\mu^*f_1(x^*)\right)^T\bar{v}=\bar{v}^T(A_0+\mu^*A_1)\bar{v}=\bar{v}^TA_0\bar{v}=\bar{v}^TA_1\bar{v}=0$. It implies that $\bar{v}^T(\mu_{0}A_{0}+\mu_{1}A_{1})\bar{v}=0$, which contradicts with the joint definiteness assumption \eqref{eq:joint-definite-1}.
\end{proof}

The following theorem establishes a connection between the local-nonglobal minimizers of $(GTRE)$ and $(GTR)$, which will play a crucial role in the remaining discussions about the number of the local-nonglobal minimizers of $(GTR)$ in the next section.

\begin{theorem}\label{th:ineq-nonglobal=eq-nonglobal}
	Suppose that $A_0$ and $A_1$ are jointly definite, and suppose that $n\geq2$. A vector $x^{*}$ is a local-nonglobal minimizer of  $(GTR)$ if and only if $x^{*}$ is a local-nonglobal minimizer of the corresponding  $(GTRE)$ with $\mu^*>0$.
\end{theorem}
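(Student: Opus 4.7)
The plan is to deduce the theorem from the two characterization results Theorem \ref{th:GTRE-strict-nonglobal-sufnec} and Theorem \ref{th:GTR-strict-nonglobal-sufnec}, leveraging the fact that, under joint definiteness, Theorem \ref{th:nonstrict-nonglobal-zero} guarantees every local-nonglobal minimizer (of either problem) is strict. Thus on both sides of the equivalence we may work directly with the strict-nonglobal characterizations.

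The direction ``$\Longleftarrow$" is then immediate. Suppose $x^{*}$ is a local-nonglobal minimizer of $(GTRE)$ with associated multiplier $\mu^{*}>0$. By Theorem \ref{th:nonstrict-nonglobal-zero} it is strict, so Theorem \ref{th:GTRE-strict-nonglobal-sufnec} supplies $f_{1}(x^{*})=0$, the first-order condition \eqref{eq:1NecCond}, the second-order sufficient condition \eqref{eq:2SufCond}, and $A_{0}+\mu^{*}A_{1}\not\succeq 0$. These together with $\mu^{*}>0$ are precisely the hypotheses of Theorem \ref{th:GTR-strict-nonglobal-sufnec} (which has no $A_{1}\neq 0$ clause), so $x^{*}$ is a strict local-nonglobal minimizer of $(GTR)$.

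For the direction ``$\Longrightarrow$", suppose $x^{*}$ is a local-nonglobal minimizer of $(GTR)$. Theorem \ref{th:nonstrict-nonglobal-zero} again gives strictness, Theorem \ref{th:GTR-strict-nonglobal-sufnec} yields $f_{1}(x^{*})=0$, \eqref{eq:1NecCond}, \eqref{eq:2SufCond}, and $A_{0}+\mu^{*}A_{1}\not\succeq 0$, while Lemma \ref{th:GTR-nonglobal-lambda>0} furnishes $\mu^{*}>0$. To apply Theorem \ref{th:GTRE-strict-nonglobal-sufnec} and conclude that $x^{*}$ is a strict local-nonglobal minimizer of $(GTRE)$, the only additional ingredient required is $A_{1}\neq 0$.

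Establishing $A_{1}\neq 0$ is the crux and is where both the joint-definiteness assumption and the hypothesis $n\geq 2$ are used. Arguing by contradiction, assume $A_{1}=0$. Then \eqref{eq:joint-definite-1} forces $\mu_{0}A_{0}\succ 0$ for some $\mu_{0}\in\mathcal{R}$, so $A_{0}$ is definite. If $A_{0}\succ 0$ then $(GTR)$ is a convex program with a linear constraint and every local minimizer is global, contradicting nonglobality of $x^{*}$. If $A_{0}\prec 0$ then $A_{0}+\mu^{*}A_{1}=A_{0}\prec 0$; combined with $\nabla f_{1}(x^{*})\neq 0$ (Lemma \ref{th:Nonglobal-LICQ}) and $n\geq 2$, the tangent space $\{v\in\mathcal{R}^{n}:\nabla f_{1}(x^{*})^{T}v=0\}$ has dimension $n-1\geq 1$ and thus contains some nonzero $v$, which violates \eqref{eq:2SufCond}. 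This contradiction yields $A_{1}\neq 0$, and the characterization theorem delivers the claim. Ruling out the degenerate case $A_{1}=0$ is the only genuine obstacle; without $n\geq 2$ the tangent-space argument collapses and without joint definiteness $A_{0}$ may be indefinite, allowing the kind of linear-constraint counterexamples illustrated in Remark \ref{rmk:s2-cexmpl-linear-GTR}, so both hypotheses are used at precisely this step.
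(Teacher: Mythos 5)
Your proposal is correct and follows essentially the same route as the paper: both directions reduce via Theorem \ref{th:nonstrict-nonglobal-zero} to the strict characterizations in Theorems \ref{th:GTRE-strict-nonglobal-sufnec} and \ref{th:GTR-strict-nonglobal-sufnec}, and the forward direction hinges on ruling out $A_1=0$ by contradiction using the joint definiteness and $n\geq2$. The only cosmetic difference is how that contradiction is closed: the paper derives $A_0\prec0$ and invokes Lemma \ref{th:exac-one-negative-eigenvalue} (at least two negative eigenvalues when $n\geq2$), whereas you split into $A_0\succ0$ (handled by convexity, though it already contradicts $A_0+\mu^*A_1\not\succeq0$) and $A_0\prec0$ (a nonzero tangent vector violating \eqref{eq:2SufCond}), which is an equally valid variant.
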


\begin{proof}
	{\bf ``$\Longleftarrow$". } It follows directly from Theorems \ref{th:nonstrict-nonglobal-zero}, \ref{th:GTRE-strict-nonglobal-sufnec} and \ref{th:GTR-strict-nonglobal-sufnec}.
	
	{\bf ``$\Longrightarrow$". }  Assume that $x^{*}$ is a local-nonglobal minimizer of  $(GTR)$.  By Theorems \ref{th:nonstrict-nonglobal-zero} and \ref{th:GTR-strict-nonglobal-sufnec}, there exists a Lagrangian multiplier $\mu^{*}>0$ such that $f_1(x^*)=0$ and the sufficient optimality conditions \eqref{eq:1NecCond} and \eqref{eq:2SufCond} hold with  $A_{0}+\mu^{*}A_{1}\not\succeq0$. From Theorem  \ref{th:GTRE-strict-nonglobal-sufnec}, one needs only to verify $A_1\ne0$ to complete the proof. We suppose by contradiction that $A_1=0$. It yields that
	\begin{equation} \label{eq:s3A0<0}
		\begin{cases}
			\mu_{0}A_0+\mu_1A_{1}\succ0\,(\because\,\eqref{eq:joint-definite-1})\Longrightarrow\, \mu_{0}A_0\succ0\\
			A_{0}+\mu^{*}A_{1}\not\succeq0\,\Longrightarrow\, A_0\not\succeq0
		\end{cases}
		\,\Longrightarrow\, A_0\prec0.
	\end{equation}
	\eqref{eq:s3A0<0} means from the assumption ``$n\geq2$" that $A_0=A_{0}+\mu^{*}A_{1}$ has at least two negative eigenvalues, which contradicts with Lemma \ref{th:exac-one-negative-eigenvalue}.
\end{proof}

\begin{remark} \label{rmk:s2-cexmpl-3}
	The assumption ``$n\geq2$" is necessary in the above theorem, which is illustrated by the following counterexample. The  $(GTR)$ problem 
	\begin{equation}\label{eq:example-n-greater2}
		\min\{x(1-x)\,|\,x-1\leq0\}
	\end{equation}
	satisfies \eqref{eq:joint-definite-1} and has a strict local-nonglobal minimizer $x^*=1$. However, $x^*=1$ is a global minimizer of the corresponding $(GTRE)$ problem: $\min\{x(1-x)\,|\,x-1=0\}$. 
\end{remark}

The proof process of the above theorem reveals a fact as follows.
\begin{corollary}
	Suppose that $A_0$ and $A_1$ are jointly definite, and suppose that $n\geq2$. If $(GTR)$ has a local-nonglobal minimizer, then $A_1\ne0$.
\end{corollary}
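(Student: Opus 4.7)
The plan is to extract the argument that already appears inside the ``$\Longrightarrow$'' direction of Theorem \ref{th:ineq-nonglobal=eq-nonglobal}, since that direction of the theorem established $A_1\ne0$ as an intermediate step before invoking Theorem \ref{th:GTRE-strict-nonglobal-sufnec}. The corollary is precisely that intermediate step, so no new geometric or algebraic idea is required; one only needs to isolate and restate the contradiction.

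I would proceed by contradiction, assuming $A_1=0$. First, from the joint definiteness hypothesis \eqref{eq:joint-definite-1}, I pick scalars $\mu_0,\mu_1$ with $\mu_0A_0+\mu_1A_1\succ0$; setting $A_1=0$ forces $\mu_0A_0\succ0$, and in particular $A_0$ is a nonsingular definite matrix of sign $\operatorname{sign}(\mu_0)$. Second, since $x^*$ is a local-nonglobal minimizer of $(GTR)$, Theorem \ref{th:nonstrict-nonglobal-zero} tells me that $x^*$ is strict, and then Theorem \ref{th:GTR-strict-nonglobal-sufnec} supplies a Lagrange multiplier $\mu^*>0$ with $A_0+\mu^*A_1\not\succeq0$. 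Under $A_1=0$ this collapses to $A_0\not\succeq0$, which combined with the previous step gives $A_0\prec0$ (so in fact $\mu_0<0$).

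The closing step is the quantitative clash with Lemma \ref{th:exac-one-negative-eigenvalue}: that lemma says $A_0+\mu^*A_1$ has \emph{exactly one} negative eigenvalue, but with $A_1=0$ and $n\ge 2$ the matrix $A_0+\mu^*A_1=A_0\prec0$ contributes $n\ge2$ negative eigenvalues, a contradiction. I expect no real obstacle here; the only subtlety worth mentioning is that the hypothesis $n\ge2$ is being used in exactly the same place as in Theorem \ref{th:ineq-nonglobal=eq-nonglobal}, and Remark \ref{rmk:s2-cexmpl-3} already shows the statement fails when $n=1$, which is a sanity check rather than an obstacle.
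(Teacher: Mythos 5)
Your proposal is correct and is essentially the paper's own argument: the paper proves this corollary simply by observing that it is the intermediate contradiction step inside the ``$\Longrightarrow$'' direction of Theorem \ref{th:ineq-nonglobal=eq-nonglobal}, namely that $A_1=0$ together with \eqref{eq:joint-definite-1} and $A_{0}+\mu^{*}A_{1}\not\succeq0$ forces $A_0\prec0$, which for $n\geq2$ contradicts Lemma \ref{th:exac-one-negative-eigenvalue}. No gap; your extraction and restatement match the paper's reasoning step for step.
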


\section{The number of the local-nonglobal minimizers} \label{sec-number}
In this section, we shall discuss how many local-nonglobal minimizers the problem $(GTRE)$ or $(GTR)$  may have under the joint definiteness condition \eqref{eq:joint-definite}.  On this issue, by Theorem \ref{th:ineq-nonglobal=eq-nonglobal}, the problem $(GTR)$ may come down to the problem $(GTRE)$ except for the trivial case ``$n=1$". So we need only to consider the problem $(GTRE)$. 
And the problem $(GTRE)$ defined in \eqref{eq:equalityproblem} is exactly as same as the following problem:
\begin{equation}\label{eq:s3M1}
	\begin{array}{lcl}
		&\underset{x\in\mathcal{R}^n}{\min} &f_{0}(x)+\mu_{1}f_{1}(x)=x^{T}(A_{0}+\mu_{1}A_{1})x+2(b_{0}+\mu_{1}b_{1})x+\mu_{1}c_1\\
		&\mbox{s.t.}& f_{1}(x)=x^{T}A_{1}x+2b_{1}^{T}x+c_{1}=0,
	\end{array}
\end{equation}	
where $\mu_1$ is given by \eqref{eq:joint-definite} such that $A_0+\mu_1A_1\succ0$  or $A_0+\mu_1A_1\prec0$. Perform a Cholesky factorization on $A_0+\mu_1A_1$ or $-(A_0+\mu_1A_1)$ such that $A_0+\mu_1A_1=\pm LL^T$, where $L$ is a lower triangular matrix with positive diagonal elements. Then one finds an invertible linear transformation with a parallel translation
\begin{equation}\label{eq:s3-Qq-trans}
	x=Q^Ty+q
\end{equation}
such that
$
f_{0}(Q^Ty+q)+\mu_{1}f_{1}(Q^Ty+q)=\pm\Vert y \Vert^{2}+c_0,
$
where $Q=L^{-1}$, $q=\mp Q^TQ(b_0+\mu_1b_1)$ and $c_0=(b_0+\mu_1b_1)^Tq+\mu_{1}c_1$. Compute 
$$
g(y):=f_{1}(Q^Ty+q)=y^{T}{A} y+2{b}^{T}y+{c},
$$
where ${A}=QA_1Q^T$,  ${b}=QA_1q+Qb_1$ and ${c}=q^TA_1q+2b_1^Tq+c_1$. Then the problem \eqref{eq:s3M1} is equivalently converted into the problem \eqref{eq:s3M2}  
\begin{equation}\label{eq:s3M2}
	\begin{array}{lcl}
		&\underset{y\in\mathcal{R}^n}{\min} &\Vert y \Vert^{2}\\
		&\mbox{s.t.} &g(y)=y^{T}{A} y+2{b}^{T}y+{c}=0,
	\end{array}
\end{equation}	
or the problem \eqref{eq:s3M3}
\begin{equation}\label{eq:s3M3}
	\begin{array}{lcl}
		&\underset{y\in\mathcal{R}^n}{\min} &-\Vert y \Vert^{2}\\
		&\mbox{s.t.} &g(y)=y^{T}{A} y+2{b}^{T}y+{c}=0.
	\end{array}
\end{equation}	 
In this section, we always denote the eigenvalue decomposition of ${A}$ by 
\begin{equation} \label{eq:s3-A-eigen-notation}
	{A}=V\Lambda V^T,\quad \Lambda=\diag\,(\lambda_1,\lambda_2,\cdots,\lambda_n),\quad V=[v_1,v_2,\cdots,v_n],
\end{equation}
where  $\lambda_1\leq\lambda_2\leq\cdots\leq\lambda_n$ and $V$ is an orthogonal matrix. Define
\begin{equation} \label{eq:s3-ri-def}
	r_i:=v_i^T{b},\quad i=1,2,\cdots,n.
\end{equation}

\subsection{On the problem \texorpdfstring{\eqref{eq:s3M2}}{}}

We first discuss  the problem \eqref{eq:s3M2}. For this problem, the following lemma describes the range of the corresponding Lagrangian multiplier to a local-nonglobal minimizer.

\begin{lemma}\label{th:s3mu*range}
	If $y^{*}$ is a local-nonglobal minimizer of the problem \eqref{eq:s3M2}, then $g(y^*)$ $=0$ and there is a  unique Lagrangian multiplier $0\ne\mu^{*}\in\mathcal{R}$ such that
	\begin{equation} \label{eq:s3-first-nec}
		\left(I+\mu^*{A}\right)y^*+\mu^{*}{b} =0.
	\end{equation}  
	Moreover, if $\mu^{*}>0$ then   $\max\{-\lambda_2,0\}<\dfrac{1}{|\mu^*|}<-\lambda_1$ and  $r_{1}\neq0$; else if $\mu^{*}<0$ then  $\max\{\lambda_{n-1},0\}<\dfrac{1}{|\mu^*|}<\lambda_n$ and $r_{n}\neq0$.
\end{lemma}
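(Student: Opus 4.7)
The plan is to read off the first-order condition \eqref{eq:s3-first-nec} and the uniqueness of $\mu^{*}$ from the general machinery of Section \ref{sec-properties}, rule out $\mu^{*}=0$ directly, and then exploit the spectral decomposition \eqref{eq:s3-A-eigen-notation} of $A$ to localize $1/|\mu^{*}|$ and force the stated non-vanishing of $r_{1}$ or $r_{n}$.

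First, since $y^{*}$ is a local-nonglobal minimizer of \eqref{eq:s3M2}, Lemma \ref{th:Nonglobal-LICQ} yields $g(y^{*})=0$ together with $\nabla g(y^{*})=2(Ay^{*}+b)\neq 0$, so $y^{*}$ is a regular point. Lemma \ref{th:OptCond}(1) then furnishes some $\mu^{*}\in\mathcal{R}$ obeying \eqref{eq:s3-first-nec}, and Lemma \ref{th:multiplier-uniqueness} secures its uniqueness. To exclude $\mu^{*}=0$, I would observe that in that case \eqref{eq:s3-first-nec} forces $y^{*}=0$, whence $\|y^{*}\|^{2}=0$ would actually attain the global minimum of $\|y\|^{2}$, contradicting the nonglobal assumption.

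Next, because the objective's Hessian $I$ is positive definite, the reduced problem \eqref{eq:s3M2} trivially satisfies the joint definiteness condition \eqref{eq:joint-definite-1} (with $\mu_{0}=1$, $\mu_{1}=0$). Hence Theorem \ref{th:nonstrict-nonglobal-zero} upgrades $y^{*}$ to a strict local-nonglobal minimizer, and Lemma \ref{th:no-zero-eigenvalues} then implies that $I+\mu^{*}A$ is nonsingular. Combined with Lemma \ref{th:exac-one-negative-eigenvalue}, this forces the eigenvalues $\{1+\mu^{*}\lambda_{i}\}_{i=1}^{n}$ to consist of exactly one negative value and $n-1$ strictly positive values. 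When $\mu^{*}>0$ the map $\lambda\mapsto 1+\mu^{*}\lambda$ is increasing, so the negative value must be $1+\mu^{*}\lambda_{1}$; this gives $1+\mu^{*}\lambda_{1}<0<1+\mu^{*}\lambda_{2}$, which rearranges to $-\lambda_{2}<1/\mu^{*}<-\lambda_{1}$ and, together with $1/\mu^{*}>0$, yields the claimed range. The case $\mu^{*}<0$ is perfectly symmetric, with $\lambda_{n-1}$ and $\lambda_{n}$ playing the analogous roles.

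Finally, to prove $r_{1}\neq 0$ when $\mu^{*}>0$, I would argue by contradiction: if $r_{1}=0$, projecting \eqref{eq:s3-first-nec} onto $v_{1}$ and using $1+\mu^{*}\lambda_{1}\neq 0$ gives $v_{1}^{T}y^{*}=0$, whence $v_{1}^{T}(Ay^{*}+b)=\lambda_{1}v_{1}^{T}y^{*}+r_{1}=0$; so $v_{1}$ is a nonzero tangent vector at $y^{*}$ on which the Lagrangian Hessian assumes the value $1+\mu^{*}\lambda_{1}<0$, contradicting the second-order necessary condition \eqref{eq:2NecCond}. The same argument applied to $v_{n}$ disposes of $r_{n}\neq 0$ when $\mu^{*}<0$. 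The one truly delicate point is the strict separation $1+\mu^{*}\lambda_{2}>0$ (and its analog $1+\mu^{*}\lambda_{n-1}>0$), which is precisely what invoking Theorem \ref{th:nonstrict-nonglobal-zero} and Lemma \ref{th:no-zero-eigenvalues} purchases; once that is in hand, every remaining step is a direct spectral computation.
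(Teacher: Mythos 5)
Your proposal is correct and takes essentially the same route as the paper's own proof: both establish $g(y^*)=0$, existence/uniqueness of $\mu^*$, and strictness (joint definiteness holds trivially since the objective Hessian is the identity), then use Lemmas \ref{th:exac-one-negative-eigenvalue} and \ref{th:no-zero-eigenvalues} to get exactly one negative and no zero eigenvalue of $I+\mu^*A$, and finally the same spectral projection onto $v_1$ (resp.\ $v_n$) combined with the second-order necessary condition \eqref{eq:2NecCond} to force $r_1\neq0$ (resp.\ $r_n\neq0$). The only cosmetic differences are that you rule out $\mu^*=0$ by observing $y^*=0$ would be globally optimal (the paper gets $\mu^*\neq0$ from $I+\mu^*A\not\succeq0$) and you phrase the $r_1\neq0$ step as a contradiction rather than the paper's direct computation of $v_1^T\nabla g(y^*)=2r_1/(1+\mu^*\lambda_1)$.
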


\begin{proof}
	As $y^{*}$ is a local-nonglobal minimizer of the problem \eqref{eq:s3M2}, by applying Theorems \ref{th:GTRE-strict-nonglobal-sufnec} and \ref{th:nonstrict-nonglobal-zero} to the problem \eqref{eq:s3M2}, one has $g(y^*)=0$ and finds a unique Lagrangian multiplier $\mu^{*}\in\mathcal{R}$ such that  the sufficient optimality conditions \eqref{eq:1NecCond} and \eqref{eq:2SufCond} hold with  $I+\mu^{*}A\nsucceq0$, which implies that \eqref{eq:s3-first-nec} holds and $\mu^*\ne0$.  Furthermore, by Lemma \ref{th:exac-one-negative-eigenvalue} and Lemma \ref{th:no-zero-eigenvalues}, $I+\mu^{*}{A}$ is a nonsingular matrix with exactly one negative eigenvalue, which means that  
	\begin{equation} \label{s3l1mu*l2}
		\begin{array}{ll}
			\mbox{if } \mu^{*}>0\Longrightarrow 1+\mu^*\lambda_1<0<1+\mu^*\lambda_2 \,\,\,\, \Longrightarrow\max\{-\lambda_2,0\}< \dfrac{1}{|\mu^*|}<-\lambda_1;\vspace{1mm}\\
			\mbox{if } \mu^{*}<0\Longrightarrow 1+\mu^*\lambda_n<0<1+\mu^*\lambda_{n-1}\Longrightarrow\max\{\lambda_{n-1},0\}<\dfrac{1}{|\mu^*|}<\lambda_n.
		\end{array}
	\end{equation}
	So $v_i^T\left(I+\mu^*{A}\right)v_i =1+\mu^*\lambda_i$ ($i=1,n$), \eqref{s3l1mu*l2} and the necessary optimality condition \eqref{eq:2NecCond} guarantees that
	\begin{equation}\label{eq:s3-v1ndeltagne0}
		\begin{array}{ll}
			\mbox{ if } \mu^{*}>0\,\Longrightarrow\, 0\ne v_1^T\nabla g(y^*)=2v_1^T(Ay^*+b)=2(\lambda_1v_1^Ty^*+r_1), \vspace{2mm}\\
			\mbox{ if } \mu^{*}<0\,\Longrightarrow\,  0\ne v_n^T\nabla g(y^*)=2v_n^T(Ay^*+b)=2(\lambda_nv_n^Ty^*+r_n),
		\end{array}
	\end{equation} 
	where the vectors $v_i$ ($i=1,n$) are unit eigenvectors of the matrix $A$ defined in \eqref{eq:s3-A-eigen-notation}.
	Note that from \eqref{eq:s3-first-nec} one obtains that  
	\begin{equation*}
		\mu^*r_i=\mu^*v_i^Tb=-v_i^T\left(I+\mu^*{A}\right)y^*=-(1+\mu^*\lambda_i)v_i^Ty^*, \quad (i=1,n),
	\end{equation*}
	which deduces that 
	\begin{equation}\label{eq:s3-v1ny-innp}
		v_1^Ty^*=\dfrac{-\mu^*r_1}{1+\mu^*\lambda_1}
		\mbox{ and }
		v_n^Ty^*=\dfrac{-\mu^*r_n}{1+\mu^*\lambda_n},
	\end{equation}
	where $(1+\mu^*\lambda_1)(1+\mu^*\lambda_n)\ne0$ because of the nonsingularity of $I+\mu^{*}{A}$. Substituting
	\eqref{eq:s3-v1ny-innp} into \eqref{eq:s3-v1ndeltagne0}  yields that
	\begin{equation*}
		\begin{cases}
			\mbox{ if } 
			\mu^{*}>0,\,0\ne v_1^T\nabla g(y^*) =\dfrac{2r_1}{1+\mu^*\lambda_1} \,\Longrightarrow\,r_1\ne0,\vspace{1mm}\\
			\mbox{ if } \mu^{*}<0,\,0\ne v_n^T\nabla g(y^*) =\dfrac{2r_n}{1+\mu^*\lambda_n} \, \Longrightarrow \,r_n\ne0.
		\end{cases}
	\end{equation*}
	The proof is completed.
\end{proof}

Note that the case ``$\mu^*<0$" can exactly regarded as the situation ``$-\mu^*>0$ with the equality constraint $-g(y^*)=0$". Thus, from now on, we focus on discussing the case ``$\mu^*>0$". Put $\eta^*=1/{\mu^*}$. One knows from Lemma \ref{th:s3mu*range} that a local-nonglobal minimizer $y^*$  with $\eta^*>0$  for the problem \eqref{eq:s3M2}  must satisfy 
\begin{equation}\label{eq:y*eta*cond}
	\max\{-\lambda_2,0\}<\eta^*<-\lambda_1,\,r_{1}\neq0,\, y^*=-(A+\eta^*I)^{-1}b\, \mbox{ and }\, g(y^*)=0.
\end{equation}
Since the matrix $A+\eta I$ is nonsingular for all $\eta\in\left(-\lambda_2,-\lambda_1\right)$, we define a vector function $y_1(\eta)$ and a function $\psi_1(\eta)$ by
\begin{equation}\label{eq:y-psi(eta)}
	\begin{cases}
		y_1(\eta):=-(A+\eta I)^{-1}b=-\sum\limits_{i=1}^{n}\dfrac{r_{i}}{\lambda_{i}+\eta}\,v_i,\vspace{1mm}\\
		\psi_1(\eta):=g(y_1(\eta))=\sum\limits_{i=1}^{n} \dfrac{\lambda_ir_{i}^{2}}{(\lambda_{i}+\eta)^{2}}-\sum\limits_{i=1}^{n} \dfrac{2r_{i}^{2}}{\lambda_{i}+\eta}+c,
	\end{cases}
	\,\forall\eta\in\left(-\lambda_2,-\lambda_1\right),
\end{equation}
where $\max\{-\lambda_2,0\}<-\lambda_1$ and $r_1\ne0$.  Note that $\eta^*$ in \eqref{eq:y*eta*cond} is a zero point of $\psi_1$. A natural question is: how many zero points may the function $\psi_1$  have on the interval $(\max\{-\lambda_2,0\},\,-\lambda_1)$? To answer this question, we list several useful properties about $y_1(\eta)$ and $\psi_1(\eta)$: 
\begin{equation}\label{eq:s3nablag-psi'}
	\begin{cases}
		y_1(\eta)\ne0,\quad \forall \eta\in \left(-\lambda_2,-\lambda_1\right);\vspace{1mm}\\
		\nabla g|_{y_1(\eta)}=2Ay_1(\eta)+2b=-2\eta y_1(\eta)\ne0,\quad\forall \eta\in \left(\max\{-\lambda_2,0\},-\lambda_1\right);\vspace{1mm}\\
		\psi_1(\eta)\rightarrow -\infty\, \mbox{ as } \eta \rightarrow (-\lambda_{1})^-\,; \vspace{1mm}\\
		\psi_1'(\eta)=2\eta\sum\limits_{i=1}^{n}\dfrac{r_{i}^{2}}{(\lambda_{i}+\eta)^{3}}=2\eta\phi_1(\eta),\quad \mbox{where } \phi_1(\eta):=\sum\limits_{i=1}^{n}\dfrac{r_{i}^{2}}{(\lambda_{i}+\eta)^{3}};\vspace{1mm}\\ 
		\phi'_1(\eta)=-3\sum\limits_{i=1}^{n}\dfrac{r_{i}^{2}}{(\lambda_{i}+\eta)^{4}}<0,\,\forall \eta\in \left(-\lambda_2,-\lambda_1\right).
	\end{cases}
\end{equation}
Based on the negativeness of $\phi'_1$, one can obtain the following lemma, which tells us  that $\psi_1$ has at most three zero points on the interval $\left(-\lambda_2,-\lambda_1\right)$.

\begin{lemma}\label{th:psi-zerosnumber}
	Suppose that $\max\{-\lambda_2,0\}<-\lambda_1$ and $r_1\ne0$ for the problem \eqref{eq:s3M2}. When $-\lambda_2\geq0$, then either $\psi'_1(\eta)<0$   $\forall\eta\in\left(-\lambda_2,\, -\lambda_1\right)$, or $\exists\,\eta_0\in\left(-\lambda_2,\, -\lambda_1\right)$ such that
	\begin{equation} \label{eq:s3psi'sign-1}
		\psi'_1(\eta)
		\begin{cases}
			>0, & \forall\eta\in\left(-\lambda_2,\eta_0\right), \\
			=0, & \eta=\eta_0,\\
			<0,& \forall\eta\in\left(\eta_0,-\lambda_1\right).
		\end{cases}
	\end{equation}
	And when $-\lambda_2<0$, then the signs of $\psi'_1$ belong to one of the following four cases: 
	\begin{subequations} 
		\begin{align}
			\label{eq:s3psi'sign-2}
			\mbox{Case 1.}\quad&\psi'_1(\eta)
			\begin{cases}
				>0, & \forall\eta\in\left(-\lambda_2,0\right), \\
				=0, & \eta=0,\\
				<0,& \forall\eta\in\left(0,-\lambda_1\right);
			\end{cases}\\
			\vspace{1mm}
			\label{eq:s3psi'sign-3-0}
			\mbox{Case 2.}\quad&\psi'_1(\eta)
			\begin{cases}
				<0, & \forall\eta\in\left(-\lambda_2,0\right), \\
				=0, & \eta=0,\\
				<0,& \forall\eta\in\left(0,-\lambda_1\right);
			\end{cases}\\
			\vspace{1mm}
			\label{eq:s3psi'sign-3-1}
			\mbox{Case 3.}\quad&\psi'_1(\eta)
			\begin{cases}
				<0, & \forall\eta\in\left(-\lambda_2,\,\eta_0\right), \\
				=0, & \eta=\eta_0,\\
				>0, & \forall\eta\in\left(\eta_0,\, 0\right), \\
				=0, & \eta=0,\\
				<0,& \forall\eta\in\left(0,\, -\lambda_1 \right);
			\end{cases}\\
			\vspace{1mm}
			\label{eq:s3psi'sign-3-2}
			\mbox{Case 4.}\quad&\psi'_1(\eta)
			\begin{cases}
				<0, & \forall\eta\in\left(-\lambda_2,\,0\right), \\
				=0, & \eta=0,\\
				>0, & \forall\eta\in\left(0,\, \eta_0\right), \\
				=0, & \eta=\eta_0,\\
				<0,& \forall\eta\in\left(\eta_0,\, -\lambda_1 \right).
			\end{cases}
		\end{align}
	\end{subequations}
\end{lemma}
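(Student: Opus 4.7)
The plan is to exploit the factorization $\psi'_1(\eta)=2\eta\,\phi_1(\eta)$ from \eqref{eq:s3nablag-psi'} and analyse the two factors separately on the open interval $(-\lambda_2,-\lambda_1)$. Two structural facts about $\phi_1$ do all the work. First, $\phi'_1(\eta)<0$ everywhere on the interval, so $\phi_1$ is \emph{strictly} decreasing and can vanish at most once. Second, because $r_1\ne 0$, the term $r_1^2/(\lambda_1+\eta)^3$ tends to $-\infty$ as $\eta\to(-\lambda_1)^-$ while the remaining terms stay bounded (since $\lambda_i+\eta\to\lambda_i-\lambda_1>0$ for $i\ge 2$), so $\phi_1(\eta)\to -\infty$ at the right endpoint. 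Combined with strict monotonicity, this leaves only two global sign patterns for $\phi_1$ itself: either $\phi_1<0$ throughout, or there is a unique $\eta_0$ in the interval with $\phi_1>0$ to its left and $\phi_1<0$ to its right.

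\textbf{The case $-\lambda_2\ge 0$.} Here the whole interval lies in $\{\eta>0\}$, so the sign of $\psi'_1$ coincides with the sign of $\phi_1$. Applying the dichotomy above for $\phi_1$ immediately yields either $\psi'_1<0$ on all of $(-\lambda_2,-\lambda_1)$, or the sign pattern $+,0,-$ of \eqref{eq:s3psi'sign-1}. No other possibility can arise.

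\textbf{The case $-\lambda_2<0$.} Now $0$ lies in the interior of $(-\lambda_2,-\lambda_1)$, so $\psi'_1(0)=0$ forces a sign split at $\eta=0$ regardless of $\phi_1$. I partition according to $\mathrm{sign}(\phi_1(0))$. If $\phi_1(0)>0$, strict decrease gives $\phi_1>0$ on $(-\lambda_2,0)$ and a unique zero $\eta_0\in(0,-\lambda_1)$ with $\phi_1<0$ thereafter; multiplying by $2\eta$ and tracking the sign flip at $0$ produces Case 4, \eqref{eq:s3psi'sign-3-2}. If $\phi_1(0)=0$, then $\phi_1>0$ on $(-\lambda_2,0)$ and $\phi_1<0$ on $(0,-\lambda_1)$, so $\psi'_1=2\eta\,\phi_1$ is negative on both halves, giving Case 2, \eqref{eq:s3psi'sign-3-0}. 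If $\phi_1(0)<0$, there are two sub-possibilities: either $\phi_1<0$ already on all of $(-\lambda_2,0)$, in which case the sign flip at $0$ produces Case 1, \eqref{eq:s3psi'sign-2}; or $\phi_1$ has a unique zero $\eta_0\in(-\lambda_2,0)$ with $\phi_1>0$ to its left and $\phi_1<0$ to its right, and then multiplying by the negative factor $2\eta$ on $(-\lambda_2,0)$ reverses those signs, combining with the $\phi_1<0,\,\eta>0$ behaviour on $(0,-\lambda_1)$ to yield precisely Case 3, \eqref{eq:s3psi'sign-3-1}.

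\textbf{Expected obstacle.} The argument contains no analytic difficulty; the main item requiring care is the bookkeeping of sign flips induced by the factor $2\eta$ across $\eta=0$, which is what multiplies the ``two possible sign patterns of $\phi_1$'' into the \emph{four} distinct cases listed in the lemma. Once one records, case by case, where $\phi_1$ vanishes relative to $0$ and in which half of the interval $2\eta$ is negative, all four sign tables drop out mechanically and the enumeration is seen to be exhaustive.
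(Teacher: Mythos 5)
Your proof is correct and takes essentially the same approach as the paper: both factor $\psi'_1(\eta)=2\eta\,\phi_1(\eta)$, use $\phi'_1<0$ together with $\phi_1(\eta)\to-\infty$ as $\eta\to(-\lambda_1)^-$ to reduce $\phi_1$ to the two sign patterns (all negative, or $+,0,-$ with a unique zero $\eta_0$), and then read off the four cases by locating $\eta_0$ relative to $0$. Your split on $\mathrm{sign}(\phi_1(0))$ is merely a reorganization of the paper's case split (no $\eta_0$, $\eta_0=0$, $\eta_0<0$, $\eta_0>0$), so no further comment is needed.
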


\begin{proof}
	As $r_1\ne0$ ensures that
	\begin{equation}\label{eq:s3phi'-sign}
		\phi'_1(\eta)=\sum_{i=1}^{n}\dfrac{-3r_{i}^{2}}{(\lambda_{i}+\eta)^{4}}\leq\dfrac{-3r_{1}^{2}}{(\lambda_{1}+\eta)^{4}}<0,\quad \forall \eta\in\left(-\lambda_2,\,-\lambda_1\right),
	\end{equation}
	the function $\phi_1$ is strictly monotone decreasing on the interval $\left(-\lambda_2,\,-\lambda_1\right)$. 
	Note that  $$\lim\limits_{\eta \rightarrow (-\lambda_{1})^{-}}\phi_1(\eta)= -\infty.$$
	So, either $\phi_1(\eta)<0$   $\forall\eta\in\left(-\lambda_2,\,-\lambda_1\right)$, or $\phi_1$ has a unique zero point $\eta_0\in\left(-\lambda_2,-\lambda_1\right)$ such that 
	\begin{equation} \label{eq:phi-eta0-sign}
		\phi_1(\eta)
		\begin{cases}
			>0, & \forall\eta\in\left(-\lambda_2,\eta_0\right); \\
			=0, & \eta=\eta_0;\\
			<0,& \forall\eta\in\left(\eta_0,-\lambda_1\right).
		\end{cases}
	\end{equation}
	When $-\lambda_2\geq0$, the desired conclusion is true straightforwardly because both  $\psi_1'(\eta)$ and  $\phi_1(\eta)$ have exactly the same signs on the interval $\left(-\lambda_2,\, -\lambda_1\right)$.  
	When $-\lambda_2<0$, \eqref{eq:s3psi'sign-2}, \eqref{eq:s3psi'sign-3-0}, \eqref{eq:s3psi'sign-3-1} and \eqref{eq:s3psi'sign-3-2} can directly follow  from the signs of  $\phi_1$ in accordance with the four cases: the non-existence of $\eta_{0}$, $\eta_{0}=0$, $\eta_{0}<0$ and $\eta_{0}>0$, respectively.  
\end{proof}

\begin{remark}\label{rmk:s3-psi1-pos-zeros}
	By Lemma \ref{th:psi-zerosnumber},  the signs of $\psi'_1$ on the interval $\left(\max\{-\lambda_2,0\},\,  -\lambda_1\right)$ satisfy that either $\psi'_1(\eta)<0$   $\forall\eta\in\left(\max\{-\lambda_2,0\},\, -\lambda_1\right)$, or $\exists\,\eta_0\in\left(\max\{-\lambda_2,0\},\, -\lambda_1\right)$ such that
	\begin{equation} \label{eq:s3psi'sign-3-3}
		\psi'_1(\eta)
		\begin{cases}
			>0, & \forall\eta\in\left(\max\{-\lambda_2,0\},\eta_0\right), \\
			=0, & \eta=\eta_0,\\
			<0,& \forall\eta\in\left(\eta_0,-\lambda_1\right).
		\end{cases}
	\end{equation}  
	So, on the interval $\left(\max\{-\lambda_2,0\},\,  -\lambda_1\right)$, the zero points of $\psi_1$  must belong to one of the following three cases: {\bf (i)} $\psi_1$ has no zero points; {\bf (ii)} $\psi_1$ has one zero point $\eta_1$ with $\psi_1'(\eta_1)\leq0$;  {\bf (iii)} $\psi_1$ has two zero points  $\eta_1$ and $\eta_2$ with $\psi_1'(\eta_1)<0$ and $\psi_1'(\eta_2)>0$, respectively. 
\end{remark}

In order to figure out whether or not the zero points of $\psi_1$ on the interval $\left(\max\{-\lambda_2,0\},  -\lambda_1\right)$  admit local-nonglobal minimizers of the problem \eqref{eq:s3M2}, one needs further to define two matrices $W_1(\eta)\in\mathcal{R}^{n\times(n-1)}$ and $B_1(\eta)\in\mathcal{R}^{(n-1)\times(n-1)}$ by
\begin{equation*}
	\begin{cases}
		W_1(\eta)=\dfrac{r_{1}}{\lambda_{1}+\eta}
		\begin{bmatrix}v_2,&v_3,&\cdots,&v_n \end{bmatrix}-v_1u^T, \vspace{2mm}\\
		B_1(\eta)=W_1(\eta)^{T}(A+\eta I)W_1(\eta)
		=\tilde{B}_1(\eta)+(\lambda_{1}+\eta)uu^T,
	\end{cases}
	\quad\forall\eta\in\left(-\lambda_2,-\lambda_1\right),
\end{equation*}
where 
\begin{equation*}
	\begin{array}{l}
		\tilde{B}_1(\eta)=\dfrac{r_1^2}{(\lambda_1+\eta)^2}\diag\left({\lambda_2+\eta},{\lambda_3+\eta},\cdots,{\lambda_n+\eta}\right) \mbox{ and }\\
		u=\left[\dfrac{r_{2}}{\lambda_{2}+\eta},\dfrac{r_{3}}{\lambda_{3}+\eta},\cdots,\dfrac{r_{n}}{\lambda_{n}+\eta}\right]^T.
	\end{array}
\end{equation*}
Similarly to the proof in Theorem 3.1 of \cite{martinez1994}, one can verify that,  under the assumptions that ``$\max\{-\lambda_2,0\}<-\lambda_1$" and ``$r_1\ne0$", for all $\eta\in \left(-\lambda_2,-\lambda_1\right)$ the matrices $W_1(\eta)$ and $B_1(\eta)$ satisfy the following properties:
\begin{equation}\label{eq:s3r(W)-detB}
	\begin{cases}
		\rank\left(W_1(\eta)\right)=n-1,\vspace{1mm}\\
		\nabla g\left(y_1(\eta)\right)^TW_1(\eta)=0\quad (\mbox{ by }\nabla g|_{y_1(\eta)}=-2\eta y_1(\eta)), \vspace{1mm}\\
		\det{\left( B_1(\eta)\right)} =h_1(\eta)\phi_1(\eta) \quad (\mbox{by Schur complement formula}),
	\end{cases}
\end{equation}
where $h_1(\eta)=\dfrac{r_{1}^{2n-4}(\lambda_{2}+\eta)\cdots(\lambda_{n}+\eta)}{(\lambda_{1}+\eta)^{2n-5}}<0$ and $\phi_1(\eta)$ is defined in \eqref{eq:s3nablag-psi'}. 
The following lemma shows that the positive definiteness of $B_1(\eta)$ can be determined by only the positiveness of its determinant.

\begin{lemma}\label{th:s3-B1-definite}
	Suppose that $\lambda_1<\lambda_2$ and $r_1\ne0$ for the problem \eqref{eq:s3M2}. Then for each $\eta\in\left(-\lambda_2, -\lambda_1\right)$, $B_1(\eta)\succ0$ if and only if $\phi_1(\eta)<0$.
\end{lemma}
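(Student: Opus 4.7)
The plan is to reduce the positive‐definiteness test on the symmetric $(n-1)\times(n-1)$ matrix $B_{1}(\eta)$ to a sign test on its determinant, and then combine this with the determinant identity listed in \eqref{eq:s3r(W)-detB}. The key structural observation is that $B_1(\eta)$ is a rank-one perturbation of the obviously positive definite diagonal matrix $\tilde B_1(\eta)$, with the perturbation being \emph{negative semidefinite} (because the scalar $\lambda_1+\eta$ is negative on the interval $(-\lambda_2,-\lambda_1)$). Thus spectral interlacing forces all but at most one of the eigenvalues of $B_1(\eta)$ to be strictly positive, so the sign of $\det B_1(\eta)$ alone controls positive definiteness.

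More precisely, first I would verify that $\tilde B_1(\eta)\succ0$: the prefactor $r_1^{2}/(\lambda_1+\eta)^{2}$ is positive since $r_1\ne 0$, and each diagonal entry $\lambda_i+\eta$ ($i=2,\dots,n$) is positive because $\eta>-\lambda_2\ge -\lambda_i$. Next, write $B_1(\eta)=\tilde B_1(\eta)+(\lambda_1+\eta)uu^{T}$ with $\lambda_1+\eta<0$. Applying the Cauchy interlacing (or Weyl) inequality for a negative rank-one update to the eigenvalues $\sigma_1(B_1)\le\sigma_2(B_1)\le\cdots\le\sigma_{n-1}(B_1)$ of $B_1$ and $\tau_1(\tilde B_1)\le\cdots\le\tau_{n-1}(\tilde B_1)$ of $\tilde B_1$ yields
\begin{equation*}
\sigma_{k}(B_1(\eta))\ \ge\ \tau_{k-1}(\tilde B_1(\eta))\ >\ 0,\qquad k=2,3,\dots,n-1.
\end{equation*}
Therefore at most one eigenvalue of $B_1(\eta)$ can be non-positive, namely $\sigma_{1}(B_1(\eta))$. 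Consequently $B_1(\eta)\succ0$ is equivalent to $\sigma_{1}(B_1(\eta))>0$, which in turn is equivalent to $\det B_1(\eta)=\prod_{k=1}^{n-1}\sigma_k(B_1(\eta))>0$.

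Finally, I would invoke the determinant identity already recorded in \eqref{eq:s3r(W)-detB}, namely $\det B_1(\eta)=h_1(\eta)\phi_1(\eta)$, together with the sign of $h_1(\eta)$. Observing that $r_1^{2n-4}>0$, that $(\lambda_2+\eta)\cdots(\lambda_n+\eta)>0$, and that the exponent $2n-5$ is odd while $\lambda_1+\eta<0$, one has $h_1(\eta)<0$ on the whole interval $(-\lambda_2,-\lambda_1)$. Hence $\det B_1(\eta)>0$ if and only if $\phi_1(\eta)<0$, which combined with the preceding step gives the desired equivalence.

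The only place requiring genuine care is the interlacing step: one must be sure to use the correct form of Cauchy interlacing for \emph{indefinite} rank-one perturbations (as opposed to the more familiar positive rank-one update). Everything else is sign bookkeeping on the factors of $h_1(\eta)$ and a routine check that $\tilde B_1(\eta)$ is positive definite, so the interlacing bound is the single substantive ingredient in the argument.
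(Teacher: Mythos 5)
Your proposal is correct, but it proves the lemma by a genuinely different route than the paper. You work pointwise in $\eta$ and exploit the rank-one structure: since $\tilde B_1(\eta)\succ0$ (as $r_1\ne0$ and $\lambda_i+\eta>0$ for $i\ge2$) and $\lambda_1+\eta<0$ on $\left(-\lambda_2,-\lambda_1\right)$, the matrix $B_1(\eta)=\tilde B_1(\eta)+(\lambda_1+\eta)uu^T$ is a negative semidefinite rank-one downdate of a positive definite matrix, so interlacing gives $\sigma_k\bigl(B_1(\eta)\bigr)\ge\tau_{k-1}\bigl(\tilde B_1(\eta)\bigr)>0$ for $k\ge2$; hence at most one eigenvalue of $B_1(\eta)$ can be nonpositive, so $B_1(\eta)\succ0$ if and only if $\det B_1(\eta)>0$, and the identity $\det B_1(\eta)=h_1(\eta)\phi_1(\eta)$ from \eqref{eq:s3r(W)-detB} together with $h_1(\eta)<0$ finishes the argument. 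The paper's proof shares your forward direction (determinant sign bookkeeping) but handles the converse by contradiction along the interval: if $\phi_1(\eta_1)<0$ yet $B_1(\eta_1)\nsucc0$, then $B_1(\eta_1)$ is nonsingular with a negative eigenvalue, while $B_1(\eta)\succ0$ for $\eta$ near $-\lambda_1$, so by continuity $\det B_1$ vanishes at some $\eta_2\in(\eta_1,-\lambda_1)$, i.e.\ $\phi_1(\eta_2)=0$, contradicting the strict monotone decrease of $\phi_1$ established in \eqref{eq:s3phi'-sign}. Your argument buys a purely local proof that needs neither the monotonicity of $\phi_1$ nor the limiting behavior of $B_1(\eta)$ at $-\lambda_1$, and it makes transparent \emph{why} only the determinant matters (it is in the spirit of Mart\'{i}nez's original Theorem 3.1); the paper's argument avoids invoking the interlacing theorem for indefinite rank-one updates, relying instead on continuity plus a monotonicity fact it has already proved. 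One minor remark: when $n=2$ the interlacing step is vacuous since $B_1(\eta)$ is $1\times1$, but the conclusion that at most one eigenvalue is nonpositive then holds trivially, so your proof still goes through.
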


\begin{proof}
	{\bf ``$\Longrightarrow$". } Apparently $B_1(\eta)\succ0$ $\Longrightarrow$ $\det{\left(B_1(\eta) \right)}>0$ $\Longrightarrow$ $\phi_1(\eta)<0$.
	
	{\noindent\bf ``$\Longleftarrow$". } Suppose by contradiction that there is a number ${\eta_1}\in\left(-\lambda_2, -\lambda_1\right)$ such that $\phi_1(\eta_1)<0$ but $B_1({\eta_1})\nsucc0$. Note that $\phi_1(\eta_1)<0$ means $\det{\left( B_1({\eta_1})\right)}>0$, which implies that $B_1({\eta_1})$ is nonsingular and must have at least one negative eigenvalue. On the other hand, one can easily verify that there is a sufficiently small positive number $\varepsilon>0$ such that $B_1({\eta})\succ0$ for all ${\eta}\in\left[-\lambda_1-\varepsilon, -\lambda_1\right)\subseteq\left(\eta_{1},\, -\lambda_1\right)$. Therefore, from the continuity of $B_1({\eta})$ on the closed interval $\left[\eta_{1},\, -\lambda_1-\varepsilon\right]$, there exists a number  $\eta_{2}\in\left(\eta_{1},\, -\lambda_1-\varepsilon\right)$ such that $$\det{\left( B_1({\eta_2})\right)}=0\,\Longrightarrow\, \phi_1(\eta_2)=0,$$ 
	which contradicts with $\phi_1(\eta_1)<0$ and $\eta_1<\eta_2$, because $\phi_1(\eta)$ is strictly monotone decreasing on the interval $(-\lambda_2,\,-\lambda_1)$ from \eqref{eq:s3phi'-sign}. 
\end{proof}

From Lemma \ref{th:s3-B1-definite},  one can obtain the following theorem, which indicates that any zero point $\eta^*$ of $\psi_1$ on the interval $\left(\max\{-\lambda_2,0\},  -\lambda_1\right)$ with $r_1\ne0$ admits a local-nonglobal minimizer of the problem \eqref{eq:s3M2} if and only if $\psi'_1(\eta^*)<0$. 

\begin{theorem}
	\label{th:s3-main-th1}
	The problem \eqref{eq:s3M2} has a local-nonglobal minimizer $y^*$ with $\mu^*={1}/{\eta^{*}}>0$ if and only if   $\max\{-\lambda_2,0\}<\eta^*<-\lambda_1$, $r_1\ne0$,  $y^*=y_1(\eta^*)$, $\psi_1(\eta^*)=0$ and $\psi_1'(\eta^*)<0$, where $y_1(\eta)$ and $\psi_1(\eta)$ are defined by \eqref{eq:y-psi(eta)}.
\end{theorem}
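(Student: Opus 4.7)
The plan is to establish a two-way correspondence between the local-nonglobal minimizer condition at $y^{*}$ and the scalar sign $\psi_1'(\eta^*)<0$, using the reduced matrix $B_1(\eta^*)$ as the bridge. Since $\psi_1'(\eta)=2\eta\phi_1(\eta)$ and the admissible range forces $\eta^*>\max\{-\lambda_2,0\}\geq0$, the sign of $\psi_1'(\eta^*)$ is controlled solely by $\phi_1(\eta^*)$, and Lemma \ref{th:s3-B1-definite} identifies the sign of $\phi_1(\eta^*)$ with the positive definiteness of $B_1(\eta^*)$. The remaining task in each direction is to translate between the sufficient second-order condition \eqref{eq:2SufCond} on the tangent space $\{v\in\mathcal{R}^n:\nabla g(y^*)^Tv=0\}$ and the positive definiteness of $B_1(\eta^*)$, which is made possible by the rank/span identities recorded in \eqref{eq:s3r(W)-detB}.

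For the direction ($\Longrightarrow$), I would first invoke Lemma \ref{th:s3mu*range} to extract $g(y^*)=0$, $\max\{-\lambda_2,0\}<\eta^*<-\lambda_1$, $r_1\ne0$, $y^*=y_1(\eta^*)$, and hence $\psi_1(\eta^*)=g(y_1(\eta^*))=0$. Because $I\succ0$ the Hessian pair $(I,A)$ is jointly definite, so Theorem \ref{th:nonstrict-nonglobal-zero} forces $y^*$ to be strict, and Lemma \ref{th:strict-nonglobal-suf-cond} then yields the sufficient inequality \eqref{eq:2SufCond}. Since $\rank(W_1(\eta^*))=n-1$ and every column of $W_1(\eta^*)$ lies in the tangent space, the columns form a basis of that space, and \eqref{eq:2SufCond} is equivalent to $W_1(\eta^*)^T(I+\mu^*A)W_1(\eta^*)\succ0$. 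Using $\mu^*(A+\eta^*I)=I+\mu^*A$ this reads $\mu^*B_1(\eta^*)\succ0$, and $\mu^*>0$ gives $B_1(\eta^*)\succ0$. Lemma \ref{th:s3-B1-definite} then delivers $\phi_1(\eta^*)<0$ and hence $\psi_1'(\eta^*)<0$. For the converse ($\Longleftarrow$), I would verify the three hypotheses of Theorem \ref{th:GTRE-strict-nonglobal-sufnec}: feasibility $g(y^*)=\psi_1(\eta^*)=0$ is given; the relation $y^*=y_1(\eta^*)$ rewritten as $(A+\eta^*I)y^*+b=0$ and scaled by $\mu^*=1/\eta^*>0$ produces \eqref{eq:1NecCond}; the bound $\eta^*<-\lambda_1$ forces $\lambda_1<0$, whence $1+\mu^*\lambda_1<0$ gives both $I+\mu^*A\not\succeq0$ and $A\ne0$; finally $\psi_1'(\eta^*)<0$ with $\eta^*>0$ yields $\phi_1(\eta^*)<0$, so Lemma \ref{th:s3-B1-definite} gives $B_1(\eta^*)\succ0$, and reversing the span argument of the forward direction produces \eqref{eq:2SufCond}.

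The main technical obstacle is the two-way translation between the sufficient inequality \eqref{eq:2SufCond} on the full $(n-1)$-dimensional tangent space and positive definiteness of the smaller matrix $B_1(\eta^*)$; this rests on the rank/span identities \eqref{eq:s3r(W)-detB} already recorded for $W_1(\eta^*)$, combined with Lemma \ref{th:s3-B1-definite}, which rules out the a priori possibility that $B_1(\eta^*)$ is indefinite whenever its determinant is positive. Once this reduction is in hand, the rest is bookkeeping between $\eta^*$ and $\mu^*=1/\eta^*$ and checking that the strict eigenvalue bounds on $\eta^*$ force the required sign structure of $I+\mu^*A$ to invoke Theorem \ref{th:GTRE-strict-nonglobal-sufnec}.
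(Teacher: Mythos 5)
Your proposal is correct and follows essentially the same route as the paper: Lemma \ref{th:s3mu*range} for the necessary conditions in the forward direction, strictness via Theorem \ref{th:nonstrict-nonglobal-zero} together with the sufficiency characterization (Theorem \ref{th:GTRE-strict-nonglobal-sufnec}/Lemma \ref{th:strict-nonglobal-suf-cond}), the rank/span identities \eqref{eq:s3r(W)-detB} to translate \eqref{eq:2SufCond} into $B_1(\eta^*)\succ0$, and Lemma \ref{th:s3-B1-definite} to pass between $B_1(\eta^*)\succ0$ and $\phi_1(\eta^*)<0$, i.e.\ the sign of $\psi_1'(\eta^*)$. The only cosmetic difference is that you spell out the scaling $I+\mu^*A=\mu^*(A+\eta^*I)$ and the basis argument explicitly, which the paper leaves implicit.
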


\begin{proof}
	{\bf ``$\Longrightarrow$". } Assume that $y^*$ is a local-nonglobal minimizer of  \eqref{eq:s3M2} with $\mu^*={1}/{\eta^{*}}>0$. By Lemma \ref{th:s3mu*range}, $y^*$ and $\eta^{*}$ satisfy \eqref{eq:y*eta*cond}, that is, 
	\begin{equation*}
		\max\{-\lambda_2,0\}<\eta^*<-\lambda_1,\,r_{1}\neq0,\, y^*=y_1(\eta^*)\, \mbox{ and }\, \psi_1(\eta^*)=g(y_1(\eta^*))=g(y^*)=0.
	\end{equation*}
	Moreover, \eqref{eq:s3r(W)-detB} guarantees that  $\mbox{Null}\left(\nabla g(y^*)^T\right)=\mbox{Range}\left(W_1(\eta^*)\right)$,  which yields from Theorems \ref{th:GTRE-strict-nonglobal-sufnec} and \ref{th:nonstrict-nonglobal-zero} that $B_1(\eta^{*})\succ0$. Then, by using Lemma \ref{th:s3-B1-definite}, one has 
	\begin{equation*}
		\phi_1(\eta^{*})<0\,\Longrightarrow\,\psi_1'(\eta^*)=2\eta^*\phi_1(\eta^*)<0.
	\end{equation*}
	
	{\noindent\bf ``$\Longleftarrow$". } Note that the condition ``$\max\{-\lambda_2,0\}<\eta^*<-\lambda_1$"  implies that $\lambda_1<0$, which  means that $A\ne0$. Then one can easily verify that the point $y^*=y_1(\eta^*)$ and the multiplier $\mu^*=1/\eta^*$ satisfy all the three conditions (i), (ii) and (iii) given in Theorem \ref{th:GTRE-strict-nonglobal-sufnec}, except for the sufficient optimality condition \eqref{eq:2SufCond}. From \eqref{eq:s3nablag-psi'} and \eqref{eq:s3r(W)-detB},  one obtains that $\nabla g(y^*)\ne0$ and $\mbox{Null}\left(\nabla g(y^*)^T\right)=\mbox{Range}\left(W_1(\eta^*)\right)$. So, if one verifies $B_1(\eta^*)=W_1(\eta^*)^{T}(A+\eta^* I)W_1(\eta^*)\succ0$, then \eqref{eq:2SufCond} holds. In fact, one deduces from $\max\{-\lambda_2,0\}<\eta^*$ and $\psi_1'(\eta^*)=2\eta^*\phi_1(\eta^*)<0$    that
	\begin{equation*}
		\phi_1(\eta^*)<0 \,\Longrightarrow\,B_1(\eta^{*})\succ0\,\,\, (\mbox{by Lemma \ref{th:s3-B1-definite}}).
	\end{equation*}
	Therefore, by Theorem \ref{th:GTRE-strict-nonglobal-sufnec}, $y^*=y_1(\eta^*)$ is a local-nonglobal minimizer of  \eqref{eq:s3M2} with $\mu^*={1}/{\eta^{*}}>0$.
\end{proof}

\begin{example} \label{ex:psi-roots}
	\begin{equation} \label{eq:psi-roots-3}
		\min\, \{y_{1}^{2}+y_{2}^{2}\,\,\,|\, g(y_{1},y_{2})=-4y_{1}^{2}+y_{2}^{2}+2y_{1}+16y_{2}+45=0\}. 
	\end{equation}
	This problem has $\lambda_{1}=-4$, $\lambda_{2}=1$, $r_{1}=1$ and $r_{2}=8$. As shown in Figure \ref{pic:psi(eta)-1}, the function 
	\begin{equation*}
		\psi_1(\eta)=\dfrac{-4}{(-4+\eta)^2}+ \dfrac{64}{(1+\eta)^2}-\dfrac{2}{-4+\eta}
		-\dfrac{128}{1+\eta}+45, \, \,\,\, \eta\in\left(-\lambda_2,-\lambda_1\right) =(-1,4),
	\end{equation*}
	has three zero points: $\eta_1=-0.3512$, $\eta_2=1.1829$ and $\eta_3=3.6018$ with $\psi'(\eta_1)<0$, $\psi'(\eta_2)>0$, $\psi'(\eta_3)<0$. According to Theorem \ref{th:s3-main-th1}, $\eta_3=3.6018$ admits a local-nonglobal minimizer of the problem \eqref{eq:psi-roots-3} with a positive multiplier $1/\eta_3$. 
\end{example}

\begin{figure}[htbp]
	\centering
	\begin{tikzpicture}[scale=0.65] 
		\begin{axis}[xlabel=$\eta$, ylabel=$\psi_1(\eta)$, xmin=-2, xmax=5, ymin=-40, ymax=40]
			
			\addplot[domain=-0.9:3.9, samples=100, blue, line width=1pt]{-4/(-4+x)^2+64/(1+x)^2-2*(1/(-4+x)+64/(1+x))+45};	
			
			\addplot[domain=-10:10, samples=100, red, dashed, line width=1pt]{0};
			
			\draw[dashed, line width=1pt] (-1, -50)--(-1, 50); 
			\draw[dashed, line width=1pt] (4, -50)--(4, 50); 
			\draw[dashed, line width=1pt] (0, -50)--(0, 50);
			
			\draw (-0.65,0)node[above=2mm,red]{$\eta_1$};
			\draw (-0.3512,0) [fill=red] circle (3pt);
			
			\draw (1.25,0)node[above=2mm,red]{$\eta_2$};
			\draw (1.1829,0) [fill=red] circle (3pt);
			
			\draw (3.79,0)node[above=2mm,red]{$\eta_3$};
			\draw (3.6018,0) [fill=red] circle (3pt);
		\end{axis}
	\end{tikzpicture}
	\caption{$\psi_1(\eta)$ has three zero points on $(-\lambda_{2},-\lambda_{1})$}
	\label{pic:psi(eta)-1}
\end{figure}
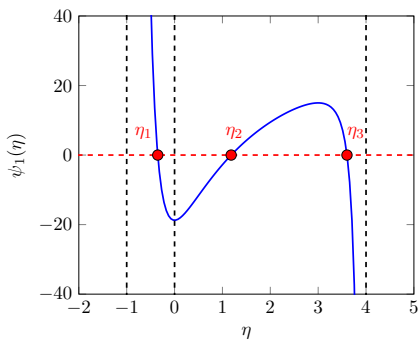

\begin{remark}
	If $\eta^*>0$  admits a local-nonglobal minimizer $y_1(\eta^*)$ of  \eqref{eq:s3M2} by Theorem \ref{th:s3-main-th1}, then $x^*=Q^Ty_1(\eta^*)+q=q-\sum\limits_{i=1}^{n}\dfrac{r_{i}}{\lambda_{i}+\eta^*}\,Q^Tv_i$ is just a local-nonglobal minimizer of $(GTRE)$  with the multiplier $\mu_1+1/\eta^*$. Moreover, if $\mu_1+1/\eta^*>0$, then $x^*$ is also a local-nonglobal minimizer of $(GTR)$.
\end{remark}

Now let us consider the case ``$\mu^*<0$". This case can be exactly regarded as the case ``$-\mu^*>0$" with the equality constraint ``$-g(y)=y^{T}(-A) y+2(-b)^{T}y+(-c)=0$". Thus one may define a vector function $y_2(\eta)$ and a function $\psi_2(\eta)$ by
\begin{equation}\label{eq:y2-psi2(eta)}
	\begin{cases}
		y_2(\eta):=-(-A+\eta I)^{-1}(-b)=-\sum\limits_{i=1}^{n}\dfrac{r_{i}}{\lambda_{i}-\eta}\,v_i,\vspace{1mm}\\
		\psi_2(\eta):=-g(y_2(\eta))=\sum\limits_{i=1}^{n} \dfrac{-\lambda_ir_{i}^{2}}{(\lambda_{i}-\eta)^{2}}+\sum\limits_{i=1}^{n} \dfrac{2r_{i}^{2}}{\lambda_{i}-\eta}-c,
	\end{cases}
	\forall\eta\in(\lambda_{n-1},\lambda_n).
\end{equation}
And then, applying Theorem \ref{th:s3-main-th1} to \eqref{eq:y2-psi2(eta)}, one gets the following result directly.

\begin{corollary}
	\label{th:s3-mu-negative-th1}
	$y^*$ is a local-nonglobal minimizer of the problem \eqref{eq:s3M2} with $\mu^{*}=-{1}/{\eta^*}<0$ if and only if   $\max\{ \lambda_{n-1},0\}<\eta^*< \lambda_n$, $r_n\ne0$,  $y^*=y_2(\eta^*)$, $\psi_2(\eta^*)=0$ and $\psi_2'(\eta^*)<0$.
\end{corollary}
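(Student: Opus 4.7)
The plan is to reduce Corollary \ref{th:s3-mu-negative-th1} directly to Theorem \ref{th:s3-main-th1} by observing that the equality constraint $g(y)=0$ is identical to the constraint $\tilde{g}(y):=-g(y)=0$, so the problem \eqref{eq:s3M2} is equivalent to
\begin{equation*}
	\underset{y\in\mathcal{R}^n}{\min}\ \|y\|^2\ \ \mbox{s.t.}\ \ \tilde{g}(y)=y^T(-A)y+2(-b)^Ty+(-c)=0.
\end{equation*}
The Lagrangian first-order condition for this reformulation reads $y^*+\tilde\mu^*(-A y^*-b)=0$, so that the multiplier $\tilde\mu^*$ for the reformulated problem and the multiplier $\mu^*$ for the original problem are related by $\tilde\mu^*=-\mu^*$. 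In particular, the case $\mu^*<0$ corresponds exactly to $\tilde\mu^*>0$, and writing $\tilde\mu^*=1/\eta^*$ with $\eta^*>0$ gives $\mu^*=-1/\eta^*$ as required.

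Next I would transcribe the eigen-data. The spectral decomposition of $-A$ is
\begin{equation*}
	-A=\tilde V\tilde\Lambda\tilde V^T,\quad \tilde\Lambda=\diag(-\lambda_n,-\lambda_{n-1},\dots,-\lambda_1),\quad \tilde V=[v_n,v_{n-1},\dots,v_1],
\end{equation*}
so that the sorted eigenvalues are $\tilde\lambda_1=-\lambda_n\leq\cdots\leq\tilde\lambda_n=-\lambda_1$ with the smallest-eigenvalue eigenvector being $v_n$. The analogues of the quantities in \eqref{eq:s3-ri-def} are $\tilde r_i=\tilde v_i^T(-b)$; in particular $\tilde r_1=-v_n^Tb=-r_n$, so $\tilde r_1\ne0$ is equivalent to $r_n\ne0$. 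Applying Theorem \ref{th:s3-main-th1} to the reformulated problem then yields the existence of a local-nonglobal minimizer $y^*$ with $\tilde\mu^*=1/\eta^*>0$ if and only if
\begin{equation*}
	\max\{-\tilde\lambda_2,0\}<\eta^*<-\tilde\lambda_1,\qquad \tilde r_1\ne0,
\end{equation*}
which, on substituting $\tilde\lambda_1=-\lambda_n$ and $\tilde\lambda_2=-\lambda_{n-1}$, becomes exactly $\max\{\lambda_{n-1},0\}<\eta^*<\lambda_n$ and $r_n\ne0$.

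Finally I would check that the vector function $\tilde y(\eta)$ and equation-value function $\tilde\psi(\eta)$ produced by Theorem \ref{th:s3-main-th1} for the reformulated problem coincide with $y_2(\eta)$ and $\psi_2(\eta)$ defined in \eqref{eq:y2-psi2(eta)}. By the definition \eqref{eq:y-psi(eta)} applied to the reformulated data,
\begin{equation*}
	\tilde y(\eta)=-\bigl((-A)+\eta I\bigr)^{-1}(-b)=-\sum_{i=1}^{n}\dfrac{r_i}{\lambda_i-\eta}v_i=y_2(\eta),
\end{equation*}
and $\tilde\psi(\eta)=\tilde g(\tilde y(\eta))=-g(y_2(\eta))=\psi_2(\eta)$. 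Hence the conditions $\tilde\psi(\eta^*)=0$ and $\tilde\psi'(\eta^*)<0$ given by Theorem \ref{th:s3-main-th1} translate verbatim into $\psi_2(\eta^*)=0$ and $\psi_2'(\eta^*)<0$, and $y^*=\tilde y(\eta^*)=y_2(\eta^*)$. This establishes both directions of the equivalence.

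The main bookkeeping care is just to make sure the eigenvalue re-labeling is done correctly (smallest eigenvalue of $-A$ equals $-\lambda_n$, with the role of $v_1$ in Theorem \ref{th:s3-main-th1} played by $v_n$), and that the sign flip in the Lagrangian multiplier is tracked; there are no genuine analytic obstacles beyond this translation.
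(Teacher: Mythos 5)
Your proposal is correct and follows essentially the same route as the paper, which treats the case $\mu^*<0$ as ``$-\mu^*>0$ with the equality constraint $-g(y)=0$'' and applies Theorem \ref{th:s3-main-th1} to the data in \eqref{eq:y2-psi2(eta)}; your write-up merely makes explicit the multiplier sign flip and the eigenvalue/eigenvector relabeling ($\tilde\lambda_1=-\lambda_n$, $\tilde r_1=-r_n$) that the paper leaves implicit in ``one gets the following result directly.''
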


Note that $y_1(\eta)$, $y_2(\eta)$, $\psi_1(\eta)$ and $\psi_2(\eta)$ have  relations as follows:
\begin{equation*}
	y_2(\eta)=y_1(-\eta)\,\,\mbox{and}\,\ \psi_2(\eta)=-\psi_1(-\eta),\quad\forall\eta\in\left(\lambda_{n-1},\lambda_n\right).
\end{equation*}
So Corollary \ref{th:s3-mu-negative-th1} can equivalently restated as follows, based on $y_1(\eta)$ and $\psi_1(\eta)$.

\begin{corollary}
	\label{th:s3-mu-negative-th1-1}
	$y^*$ is a local-nonglobal minimizer of the problem \eqref{eq:s3M2} with $\mu^{*}=-{1}/{\eta^*}<0$ if and only if  $\max\{ \lambda_{n-1},0\}<\eta^*<\lambda_n$, $r_n\ne0$,  $y^*=y_1(-\eta^*)$, $\psi_1(-\eta^*)=0$ and $\psi_1'(-\eta^*)<0$.
\end{corollary}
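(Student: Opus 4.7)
The plan is to derive this corollary as an immediate translation of Corollary \ref{th:s3-mu-negative-th1} via the identities linking the pair $(y_2,\psi_2)$ to the pair $(y_1,\psi_1)$. Specifically, I would first verify the two pointwise relations
\begin{equation*}
	y_2(\eta)=y_1(-\eta) \quad\text{and}\quad \psi_2(\eta)=-\psi_1(-\eta),\qquad \forall\,\eta\in(\lambda_{n-1},\lambda_n),
\end{equation*}
which are already noted in the text and follow directly from the definitions \eqref{eq:y-psi(eta)} and \eqref{eq:y2-psi2(eta)} by comparing the summations term by term.

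Next I would differentiate the second identity. By the chain rule, $\psi_2'(\eta)=-\psi_1'(-\eta)\cdot(-1)=\psi_1'(-\eta)$, so on the relevant interval one has the equivalences
\begin{equation*}
	\psi_2(\eta^*)=0 \iff \psi_1(-\eta^*)=0 \quad\text{and}\quad \psi_2'(\eta^*)<0 \iff \psi_1'(-\eta^*)<0.
\end{equation*}
Combining these with $y_2(\eta^*)=y_1(-\eta^*)$, the four conditions in Corollary \ref{th:s3-mu-negative-th1} (namely $\max\{\lambda_{n-1},0\}<\eta^*<\lambda_n$, $r_n\ne 0$, $y^*=y_2(\eta^*)$, $\psi_2(\eta^*)=0$, $\psi_2'(\eta^*)<0$) translate verbatim into the conditions listed in the present corollary, with the interval $(\max\{\lambda_{n-1},0\},\lambda_n)$ for $\eta^*$ unchanged.

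There is no serious obstacle here: the corollary is purely a re-expression of Corollary \ref{th:s3-mu-negative-th1} in terms of the functions $y_1$ and $\psi_1$ that were used to treat the positive-multiplier case. The only things worth writing down carefully are the sign in the derivative identity (so that the strict inequality $\psi_2'(\eta^*)<0$ becomes $\psi_1'(-\eta^*)<0$ rather than $>0$) and the fact that the multiplier relation $\mu^*=-1/\eta^*$ is preserved because the constraint in \eqref{eq:s3M2} has simply been multiplied by $-1$ in the derivation that led to Corollary \ref{th:s3-mu-negative-th1}.
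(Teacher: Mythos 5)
Your proposal is correct and is exactly the paper's route: the paper obtains this corollary by noting the identities $y_2(\eta)=y_1(-\eta)$ and $\psi_2(\eta)=-\psi_1(-\eta)$ and restating Corollary \ref{th:s3-mu-negative-th1} accordingly, which is what you do, with the chain-rule computation $\psi_2'(\eta)=\psi_1'(-\eta)$ (so the condition stays $<0$) made explicit. No gaps.
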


Consider Example \ref{ex:psi-roots} again. By using Corollary \ref{th:s3-mu-negative-th1-1}, one can verify that $\eta_1=-0.3512$ admits another local-nonglobal minimizer of the problem \eqref{eq:psi-roots-3}, which corresponds to the multiplier $1/\eta_1<0$. Therefore, the problem \eqref{eq:psi-roots-3} given in Example \ref{ex:psi-roots} has exactly two local-nonglobal minimizers, with the multiplier values $1/\eta_1<0$ and $1/\eta_3>0$, respectively.

Summarizing Lemmas \ref{th:s3mu*range} and \ref{th:psi-zerosnumber} (Remark \ref{rmk:s3-psi1-pos-zeros}), Theorem \ref{th:s3-main-th1} and Corollary \ref{th:s3-mu-negative-th1}, one obtains the following result, which involves the number of the local-nonglobal minimizers of $(GTRE)$ under the assumption that $\exists\mu_1\in\mathcal{R}$ such that $A_0+\mu_1A_1\succ0$.

\begin{theorem} \label{th:s3-number-th1}
	Suppose that $A_0$ and $A_1$ are jointly positive definite, that is,  $\exists\mu_1\in\mathcal{R}$ such that $A_0+\mu_1A_1\succ0$. Then the following  statements are true.\\
	{\bf (i)} If $A_1$ is semidefinite i.e. $A_1\succeq0$ or $A_1\preceq0$,  the problem $(GTRE)$ has at most one local-nonglobal minimizer.\\
	{\bf (ii)} If  $A_1$ is indefinite i.e. $\exists\bar{x},\tilde{x}\in\mathcal{R}^n$ such that $\bar{x}^TA_1\bar{x}<0 <\tilde{x}^TA_1\tilde{x}$, the problem $(GTRE)$ has at most two local-nonglobal minimizers.
\end{theorem}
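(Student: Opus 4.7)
The plan is to first reduce $(GTRE)$ to the canonical form \eqref{eq:s3M2} via the invertible affine transformation $x = Q^Ty + q$ described at the start of this section (which is valid under joint positive definiteness). Since this transformation is a bijection and changes the objective only by the constant $\mu_1 f_1(x)$, which vanishes on the feasible set $\{f_1 = 0\}$, the local-nonglobal minimizers of $(GTRE)$ are in one-to-one correspondence with those of \eqref{eq:s3M2}. Moreover, because $A = QA_1Q^T$ is congruent to $A_1$, the two matrices have identical inertia, so ``$A_1 \succeq 0$'' (respectively $A_1 \preceq 0$, respectively indefinite) translates directly to the same property for $A$.

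Next, by Lemma \ref{th:s3mu*range}, every local-nonglobal minimizer $y^*$ of \eqref{eq:s3M2} carries a unique nonzero Lagrangian multiplier $\mu^*$, splitting the analysis into the $\mu^* > 0$ and $\mu^* < 0$ cases. For $\mu^* > 0$, Theorem \ref{th:s3-main-th1} identifies local-nonglobal minimizers with zero points $\eta^* \in (\max\{-\lambda_2, 0\},\, -\lambda_1)$ of $\psi_1$ satisfying $\psi_1'(\eta^*) < 0$, and Remark \ref{rmk:s3-psi1-pos-zeros} guarantees that there is at most one such $\eta^*$ in this interval. A parallel analysis via Corollary \ref{th:s3-mu-negative-th1-1} applied to $\psi_1(-\,\cdot\,)$ on $(\max\{\lambda_{n-1},0\},\,\lambda_n)$ shows that at most one local-nonglobal minimizer with $\mu^* < 0$ can exist.

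For part (ii), since $A_1$ is indefinite, both sign cases are a priori possible and combining the two bounds yields at most two local-nonglobal minimizers in total. For part (i), I would argue by ruling out one sign of $\mu^*$ using the inertia. If $A_1 \succeq 0$ then $\lambda_1 \geq 0$, so the interval $(\max\{-\lambda_2, 0\},\,-\lambda_1)$ is empty; hence no local-nonglobal minimizer with $\mu^* > 0$ exists, leaving at most one (necessarily with $\mu^* < 0$). Symmetrically, if $A_1 \preceq 0$ then $\lambda_n \leq 0$, the interval $(\max\{\lambda_{n-1}, 0\},\, \lambda_n)$ is empty, ruling out $\mu^* < 0$ minimizers, so again at most one in total.

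The main obstacle is careful bookkeeping: one must confirm that the affine reduction to \eqref{eq:s3M2} preserves the count of local-nonglobal minimizers exactly, and that the sign-of-$\mu^*$ constraints in Lemma \ref{th:s3mu*range} interact correctly with the inertia of $A_1$ (equivalently $A$). Once these are in hand, the bounds follow directly from Theorem \ref{th:s3-main-th1}, Corollary \ref{th:s3-mu-negative-th1-1}, and the structural Remark \ref{rmk:s3-psi1-pos-zeros}.
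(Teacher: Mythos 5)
Your proposal is correct and follows essentially the same route as the paper's proof: reduce $(GTRE)$ to the canonical problem \eqref{eq:s3M2}, bound the $\mu^*>0$ and $\mu^*<0$ cases separately via Lemma \ref{th:s3mu*range}, Remark \ref{rmk:s3-psi1-pos-zeros}, Theorem \ref{th:s3-main-th1} and Corollary \ref{th:s3-mu-negative-th1}, and settle part (i) by the inertia of $A=QA_1Q^T$ (the paper states this as ``a $\mu^*>0$ minimizer needs $\lambda_1<0$ and a $\mu^*<0$ one needs $\lambda_n>0$,'' which is just the contrapositive of your empty-interval argument).
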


\begin{proof}  
	According to the former analysis in this section, the assumption that ``$A_0+\mu_1A_1\succ0$" guarantees that $(GTRE)$ is equivalent to the problem \eqref{eq:s3M2}. Moreover, from Lemmas \ref{th:s3mu*range} and \ref{th:psi-zerosnumber} (Remark \ref{rmk:s3-psi1-pos-zeros}), Theorem \ref{th:s3-main-th1} and Corollary \ref{th:s3-mu-negative-th1}, the problem \eqref{eq:s3M2} has at most two local-nonglobal minimizers $y_1(\eta^*_1)$ and $y_2(\eta^*_2)$. One local-nonglobal minimizer $y_1(\eta^*_1)$ corresponds to $\lambda_{1}<0$, and the other $y_2(\eta^*_2)$ corresponds to $\lambda_n>0$, which ensures that both the statements (i) and (ii) are all true, because  $A$ and  $A_1$ have the same inertia indices duo to ${A}=QA_1Q^T$ and $\det(Q)\ne0$.
\end{proof}

\begin{remark}
	As we see, the problem \eqref{eq:psi-roots-3} in Example \ref{ex:psi-roots} has exactly two local-nonglobal minimizers with the multiplier values  $1/\eta_1\approx-2.8474$  and $1/\eta_3\approx0.2776$, respectively. So the upper bound of the number of the local-nonglobal minimizers given by Theorem \ref{th:s3-number-th1} is {\bf tight}. 
\end{remark}

Applying Theorems \ref{th:ineq-nonglobal=eq-nonglobal} and \ref{th:s3-number-th1} to $(GTR)$, one immediately gets the following corollary involving the number of the local-nonglobal minimizers of $(GTR)$.

\begin{corollary} \label{th:s3-number-th1-1}
	Suppose that  $A_0$ and $A_1$ are jointly positive definite. Then the following three statements are true.\\
	{\bf (i)} If $A_1$ is semidefinite i.e. $A_1\succeq0$ or $A_1\preceq0$,  the problem $(GTR)$ has at most one local-nonglobal minimizer.\\
	{\bf (ii)} If  $A_1$ is indefinite i.e. $\exists\bar{x},\tilde{x}\in\mathcal{R}^n$ such that $\bar{x}^TA_1\bar{x}<0 <\tilde{x}^TA_1\tilde{x}$, the problem $(GTR)$ has at most two local-nonglobal minimizers.
\end{corollary}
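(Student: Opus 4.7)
The plan is to derive Corollary~\ref{th:s3-number-th1-1} directly from two already-established results: the equivalence Theorem~\ref{th:ineq-nonglobal=eq-nonglobal} between local-nonglobal minimizers of $(GTR)$ and $(GTRE)$, and the counting Theorem~\ref{th:s3-number-th1} for $(GTRE)$. Since joint positive definiteness is a special case of joint definiteness, the hypotheses of both theorems are automatically in force.

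For the main case $n\geq 2$, Theorem~\ref{th:ineq-nonglobal=eq-nonglobal} asserts that a vector $x^{*}$ is a local-nonglobal minimizer of $(GTR)$ if and only if it is a local-nonglobal minimizer of the corresponding $(GTRE)$ whose Lagrangian multiplier $\mu^{*}$ is positive. In particular, the set of local-nonglobal minimizers of $(GTR)$ is contained in that of $(GTRE)$, so the count of the former is bounded by the count of the latter. Applying Theorem~\ref{th:s3-number-th1} then yields at most one such minimizer when $A_{1}$ is semidefinite and at most two when $A_{1}$ is indefinite, which are precisely statements (i) and (ii).

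For the remaining edge case $n=1$, the matrix $A_{1}$ reduces to a scalar and is therefore automatically semidefinite, so part (ii) is vacuous. For part (i), the feasible region $\{x\in\mathcal{R}\mid a_{1}x^{2}+2b_{1}x+c_{1}\leq 0\}$ is either a single (possibly unbounded) closed interval (when $a_{1}\geq 0$) or a union of two disjoint closed half-lines (when $a_{1}<0$). On each connected component, a one-variable quadratic can have at most one interior local minimizer plus boundary candidates, and an elementary enumeration---splitting $f_{0}$ into its convex and concave sub-cases---shows that at most one local minimizer can be non-global. The only subtlety is precisely this $n=1$ case, which must be treated directly because the equivalence of Theorem~\ref{th:ineq-nonglobal=eq-nonglobal} genuinely fails in dimension one (cf.\ Remark~\ref{rmk:s2-cexmpl-3}); everything else follows mechanically from the results already in hand, and no new analytical machinery is required.
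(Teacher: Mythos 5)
Your proposal is correct and follows essentially the same route as the paper: for $n\geq2$ it combines Theorem~\ref{th:ineq-nonglobal=eq-nonglobal} with Theorem~\ref{th:s3-number-th1}, and it handles $n=1$ separately by elementary one-variable reasoning, which is exactly what the paper does (the paper merely declares the $n=1$ case "apparently" true, whereas you sketch the enumeration explicitly). No gaps.
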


\begin{proof}
	If $n=1$, the desired conclusions are true apparently. And if $n\geq2$, the desired conclusions follow directly from Theorems \ref{th:ineq-nonglobal=eq-nonglobal} and \ref{th:s3-number-th1}.
\end{proof}

\begin{remark}
	The upper bound of the number of the local-nonglobal minimizers given by Corollary \ref{th:s3-number-th1-1} is also {\bf tight}. By slightly modifying the problem \eqref{eq:psi-roots-3}, one gets the following problem:
	\begin{equation} \label{pbm:psi-roots-3}
		\min\, \{x_{1}^{2}+x_{2}^{2}-4f_1(x_1,x_2)\,\,|\, f_1(x_{1},x_{2})=-4x_{1}^{2}+x_{2}^{2}+2x_{1}+16x_{2}+45\leq0\}. 
	\end{equation}
	Utilizing the problem \eqref{eq:psi-roots-3}, one can easily obtain that \eqref{pbm:psi-roots-3} has exactly two local-nonglobal minimizers with the multiplier values $4+1/\eta_1\approx1.1526$  and $4+1/\eta_3\approx4.2776$, respectively. 
\end{remark}

\subsection{On the problem \texorpdfstring{\eqref{eq:s3M3}}{}}

For the problem \eqref{eq:s3M3},  one obtains an analogy to Lemma \ref{th:s3mu*range} by following the proof of Lemma \ref{th:s3mu*range}.

\begin{lemma}\label{th:s3mu*range-minusy}
	Suppose that $n\geq2$. If $y^{*}$ is a local-nonglobal minimizer of the problem \eqref{eq:s3M3}, then $g(y^*)=0$ and there is a  unique Lagrangian multiplier $0\ne\mu^{*}\in\mathcal{R}$ such that
	\begin{equation*} %\label{eq:s3-first-nec-1}
		\left(-I+\mu^*{A}\right)y^*+\mu^{*}{b} =0.
	\end{equation*}  
	Moreover, if $\mu^{*}>0$ then   $\max\{\lambda_1,0\}<\dfrac{1}{|\mu^*|}<\lambda_2$ and  $r_{1}\neq0$; else if $\mu^{*}<0$ then  $\max\{-\lambda_n,0\}<\dfrac{1}{|\mu^*|}<-\lambda_{n-1}$ and $r_{n}\neq0$.
\end{lemma}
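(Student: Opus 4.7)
The plan is to mirror the proof of Lemma~\ref{th:s3mu*range} line by line, substituting the objective $-\|y\|^2$ for $\|y\|^2$, and then tracking how the sign change propagates through the eigenvalue inequalities. Since the problem \eqref{eq:s3M3} is still of the form $(GTRE)$ with Hessian pair $(-I,A)$, Theorems~\ref{th:GTRE-strict-nonglobal-sufnec} and \ref{th:nonstrict-nonglobal-zero} (applied together with $\nabla(-\|y\|^2) = -2y$) give $g(y^*)=0$ and a unique multiplier $\mu^*\in\mathcal{R}$ satisfying the sufficient optimality conditions, so $(-I+\mu^* A)y^* + \mu^* b = 0$. The condition $-I+\mu^* A\not\succeq 0$ then forces $\mu^*\neq 0$, and Lemmas~\ref{th:exac-one-negative-eigenvalue} and \ref{th:no-zero-eigenvalues} guarantee that $-I+\mu^* A$ is nonsingular with exactly one negative eigenvalue.

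Next I would read off the eigenvalue inequalities. The eigenvalues of $-I+\mu^* A$ are $-1+\mu^*\lambda_i$, and these are ordered increasingly in $i$ when $\mu^*>0$ and decreasingly in $i$ when $\mu^*<0$. Demanding exactly one negative eigenvalue yields, in the first case, $-1+\mu^*\lambda_1<0<-1+\mu^*\lambda_2$, i.e.\ $\lambda_1<1/\mu^*<\lambda_2$; coupling this with $1/\mu^*>0$ gives $\max\{\lambda_1,0\}<1/|\mu^*|<\lambda_2$ as desired. In the second case ($\mu^*<0$), the smallest eigenvalue is $-1+\mu^*\lambda_n$, so one requires $\mu^*\lambda_n<1<\mu^*\lambda_{n-1}$; dividing through by $\mu^*<0$ reverses the inequalities to $\lambda_{n-1}<1/\mu^*<\lambda_n$, and translating to $1/|\mu^*|=-1/\mu^*$ produces $\max\{-\lambda_n,0\}<1/|\mu^*|<-\lambda_{n-1}$.

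For the nonvanishing of $r_1$ (resp.\ $r_n$), I would follow the same argument as in Lemma~\ref{th:s3mu*range}: the necessary condition \eqref{eq:2NecCond} combined with $v_i^T(-I+\mu^* A)v_i = -1+\mu^*\lambda_i$ shows that the eigenvector corresponding to the negative eigenvalue cannot lie in $\Null(\nabla g(y^*)^T)$. Hence for $\mu^*>0$,
\[
0\neq v_1^T\nabla g(y^*) = 2v_1^T(Ay^*+b) = 2(\lambda_1 v_1^T y^* + r_1),
\]
and substitution of $v_1^T y^* = -\mu^* r_1/(-1+\mu^*\lambda_1)$ (obtained by taking the inner product of the KKT equation with $v_1$, using that $-1+\mu^*\lambda_1\neq 0$) collapses this to $0\neq -2r_1/(-1+\mu^*\lambda_1)$, forcing $r_1\neq 0$. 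The case $\mu^*<0$ is identical with $v_1,\lambda_1,r_1$ replaced by $v_n,\lambda_n,r_n$.

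The only nontrivial step is the necessity of the hypothesis ``$n\geq 2$'': unlike in Lemma~\ref{th:s3mu*range}, where $A_0=I\succ 0$ is always jointly positive definite with any $A_1$, here the objective Hessian is $-I\prec 0$, and if $n=1$ then the feasible set is (generically) a finite set of points and $y^*$ may be a global minimizer rather than a local-nonglobal one, so the inequalities $\lambda_1<1/\mu^*<\lambda_2$ (which need both a $\lambda_1$ and a $\lambda_2$) become vacuous. Assuming $n\geq 2$ ensures that $-I+\mu^* A$ has a second eigenvalue for the strict inequalities to make sense, and this is the only place where the assumption is invoked.
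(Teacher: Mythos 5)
There is a genuine gap at the step where you claim $\mu^*\neq 0$. You write that ``the condition $-I+\mu^* A\not\succeq 0$ then forces $\mu^*\neq 0$,'' but this inference is only valid in Lemma \ref{th:s3mu*range}, where the objective Hessian is $I\succ 0$ and hence $I+0\cdot A=I\succeq 0$. Here the objective Hessian is $-I$, and $-I+0\cdot A=-I$ is already not positive semidefinite, so the condition $-I+\mu^*A\not\succeq 0$ is perfectly compatible with $\mu^*=0$ and excludes nothing. The correct argument — and the real reason the hypothesis $n\geq 2$ appears in the statement — is via Lemma \ref{th:exac-one-negative-eigenvalue}: the matrix $-I+\mu^*A$ must have \emph{exactly one} negative eigenvalue, whereas $\mu^*=0$ would make it equal to $-I$, which has $n\geq 2$ negative eigenvalues, a contradiction. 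Your closing paragraph therefore misidentifies the role of ``$n\geq 2$'': it is not merely needed so that a second eigenvalue $\lambda_2$ exists in the inequalities, it is needed to rule out $\mu^*=0$, and Remark \ref{rmk:s3-ngeq2} in the paper exhibits precisely a one-dimensional instance ($\min\{-y^2\mid y(y-1)=0\}$) with a local-nonglobal minimizer whose multiplier is $\mu^*=0$, showing your claimed implication is false when $n=1$.

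The remainder of your argument is sound and matches the intended analogy with Lemma \ref{th:s3mu*range}: applying Theorems \ref{th:GTRE-strict-nonglobal-sufnec} and \ref{th:nonstrict-nonglobal-zero} (joint definiteness of $(-I,A)$ holds trivially since $(-1)\cdot(-I)+0\cdot A\succ 0$) gives $g(y^*)=0$, the stationarity equation $(-I+\mu^*A)y^*+\mu^*b=0$, and uniqueness of $\mu^*$; the eigenvalue bookkeeping for $-1+\mu^*\lambda_i$ with exactly one negative and no zero eigenvalue correctly yields $\max\{\lambda_1,0\}<1/|\mu^*|<\lambda_2$ for $\mu^*>0$ and $\max\{-\lambda_n,0\}<1/|\mu^*|<-\lambda_{n-1}$ for $\mu^*<0$; and the derivation of $r_1\neq 0$ (resp.\ $r_n\neq 0$) from \eqref{eq:2NecCond} together with $v_1^Ty^*=-\mu^*r_1/(-1+\mu^*\lambda_1)$ is correct. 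Once you replace the faulty $\mu^*\neq 0$ step with the exactly-one-negative-eigenvalue argument above, the proof is complete.
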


\begin{remark} \label{rmk:s3-ngeq2}
	In the above lemma, the assumption ``$\,n\geq2$" is necessary.  Consider the following problem $\min\{-y^2\,|\, y(y-1)=0\}$.
	It has a local-nonglobal minimizer $y^{*}=0$, but the corresponding Lagrangian multiplier $\mu^{*}=0$.
\end{remark}

Similarly to the problem \eqref{eq:s3M2}, our discussions focus first on  the local-nonglobal minimizers of the problem \eqref{eq:s3M3} with $\mu^*=1/\eta^*>0$.
Following \eqref{eq:y-psi(eta)}, one may define a vector function $y_3(\eta)$ and a function $\psi_3(\eta)$ by 
\begin{equation}\label{eq:y3-psi3(eta)}
	\begin{cases}
		y_3(\eta):=-(A-\eta I)^{-1}b=-\sum\limits_{i=1}^{n}\dfrac{r_{i}}{\lambda_{i}-\eta}\,v_i,\vspace{1mm}\\
		\psi_3(\eta):=g(y_3(\eta))=\sum\limits_{i=1}^{n} \dfrac{\lambda_ir_{i}^{2}}{(\lambda_{i}-\eta)^{2}}-\sum\limits_{i=1}^{n} \dfrac{2r_{i}^{2}}{\lambda_{i}-\eta}+c,
	\end{cases}
	\quad\forall\eta\in \left(\lambda_1,\lambda_2\right),
\end{equation}
where $\max\{\lambda_1,0\}<\lambda_2$ and $r_1\ne0$.  Both functions have the following properties:
\begin{equation} \label{eq:s3-psi3-property-1}
	\begin{cases}
		y_3(\eta)\ne0,\quad \forall \eta\in \left(\lambda_1,\lambda_2\right);\vspace{1mm}\\
		\nabla g|_{y_3(\eta)}=2\eta y_3(\eta)\ne0,\quad\forall \eta\in \left(\max\{\lambda_1,0\},\lambda_2\right);\vspace{1mm}\\
		\psi_3(\eta)\rightarrow
		\begin{cases}
			-\infty,\quad\mbox{ if } \lambda_1<0,\vspace{1mm}\\
			+\infty,\quad\mbox{ if } \lambda_1\geq0,
		\end{cases}
		\mbox{ as }{\eta \rightarrow \lambda_{1}^{+};} \vspace{1mm}\\
		\psi_3'(\eta)=2\eta\sum\limits_{i=1}^{n}\dfrac{r_{i}^{2}}{(\lambda_{i}-\eta)^{3}}=2\eta\phi_3(\eta), \quad \mbox{where } \phi_3(\eta):=\sum\limits_{i=1}^{n}\dfrac{r_{i}^{2}}{(\lambda_{i}-\eta)^{3}}; 
		\vspace{1mm}\\
		\phi_3'(\eta)=3\sum\limits_{i=1}^{n}\dfrac{r_{i}^{2}}{(\lambda_{i}-\eta)^{4}}>0,\,\forall\eta\in \left(\lambda_1,\lambda_2\right).
	\end{cases}
\end{equation}
Utilizing the positiveness of $\phi'_3(\eta)$, one obtains an analogy to Remark \ref{rmk:s3-psi1-pos-zeros}. 

\begin{lemma}\label{th:psi3-zerosnumber}
	Suppose that $n\geq2$, $\max\{\lambda_1,0\}<\lambda_2$ and $r_1\ne0$ for the problem \eqref{eq:s3M3}. Then, on the open interval  $\left(\max\{\lambda_1,0\},\, \lambda_2\right)$,  the signs of $\psi'_3$ must belong to one of the following three cases: {\bf (i)} $\psi'_3(\eta)<0$; {\bf (ii)} $\psi'_3(\eta)>0$; {\bf (iii)}   $\exists\eta_0\in\left(\max\{\lambda_1,0\},\lambda_2\right)$ such that 
	\begin{equation} \label{eq:s3psi3'sign-1}
		\psi'_3(\eta)
		\begin{cases}
			<0, & \forall\eta\in\left(\max\{\lambda_1,0\},\eta_0\right), \\
			=0, & \eta=\eta_0,\\
			>0,& \forall\eta\in\left(\eta_0, \lambda_2 \right).
		\end{cases}
	\end{equation}
\end{lemma}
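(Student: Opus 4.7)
The plan is to reduce the claim to the elementary fact that a strictly monotone continuous function on an open interval has at most one zero, thereby classifying its sign pattern into exactly three cases; the factor $2\eta$ in $\psi_3'(\eta) = 2\eta\,\phi_3(\eta)$ will not interfere because $\eta>0$ throughout the interval under consideration.

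First, I would note that on the interval $\left(\max\{\lambda_1,0\},\,\lambda_2\right)$ one has $\eta > 0$, since $\eta > \max\{\lambda_1,0\} \geq 0$. Therefore, from the identity $\psi_3'(\eta) = 2\eta\,\phi_3(\eta)$ recorded in \eqref{eq:s3-psi3-property-1}, the sign of $\psi_3'(\eta)$ coincides with the sign of $\phi_3(\eta)$ for every $\eta$ in this interval. In particular, it suffices to analyze the sign of $\phi_3$.

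Second, I would invoke the strict monotonicity of $\phi_3$. Because $r_1\neq 0$, the derivative formula in \eqref{eq:s3-psi3-property-1} yields
\begin{equation*}
\phi_3'(\eta) = 3\sum_{i=1}^{n}\frac{r_i^{2}}{(\lambda_i-\eta)^{4}} \;\geq\; \frac{3r_1^{2}}{(\lambda_1-\eta)^{4}} \;>\; 0, \qquad \forall\,\eta\in(\lambda_1,\lambda_2),
\end{equation*}
so $\phi_3$ is strictly increasing on $(\lambda_1,\lambda_2)$, and in particular on the sub-interval $\left(\max\{\lambda_1,0\},\,\lambda_2\right)$. A continuous strictly increasing function on an open interval admits at most one zero; hence exactly one of the following mutually exclusive scenarios occurs: (a) $\phi_3<0$ throughout the interval, (b) $\phi_3>0$ throughout the interval, or (c) there is a unique $\eta_0\in\left(\max\{\lambda_1,0\},\lambda_2\right)$ with $\phi_3(\eta_0)=0$, $\phi_3<0$ on $\left(\max\{\lambda_1,0\},\eta_0\right)$, and $\phi_3>0$ on $(\eta_0,\lambda_2)$.

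Finally, translating back to $\psi_3'$ via the positive multiplier $2\eta$ gives exactly the three cases (i), (ii), (iii) stated in the lemma. The argument is essentially a one-line consequence of the monotonicity of $\phi_3$, so there is no serious obstacle; the only mild care needed is to record that the hypothesis $r_1\neq 0$ is what rules out the degenerate possibility $\phi_3\equiv 0$ and forces the strict inequality $\phi_3'(\eta)>0$, which then underlies the at-most-one-zero conclusion.
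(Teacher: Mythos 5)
Your proof is correct and follows essentially the same route as the paper: both rest on $\psi_3'(\eta)=2\eta\,\phi_3(\eta)$ with $\eta>0$ on the interval and on the strict increase of $\phi_3$ coming from $\phi_3'>0$ (forced by $r_1\neq0$). The only cosmetic difference is that you classify the sign pattern of the strictly increasing $\phi_3$ directly on $\left(\max\{\lambda_1,0\},\lambda_2\right)$, whereas the paper argues on all of $(\lambda_1,\lambda_2)$ using $\lim_{\eta\to\lambda_1^+}\phi_3(\eta)=-\infty$ and the location of the unique zero; both are sound.
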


\begin{proof} As $\psi_3'(\eta)=2\eta\phi_3(\eta)$, both $\psi_3'$ and $\phi_3$ have the same signs on the interval $\left(\max\{\lambda_1,0\},\, \lambda_2\right)$. 
	Note that $\phi_3'(\eta)>0$ for all $\eta\in\left(\lambda_1,\lambda_2\right)$. So,  on the open interval $\left(\lambda_1,\lambda_2\right)$,  $\phi_3$ is strictly monotone increasing with $\lim_{\eta\rightarrow\lambda_{1}^+}\phi_3(\eta)=-\infty$ and has at most one zero point. Firstly, if $\phi_3(\eta)$ has no zero points on the interval $\left(\lambda_1,\lambda_2\right)$ then $\phi_3(\eta)<0$ for all $\eta\in\left(\lambda_1,\lambda_2\right)$, which implies that $\psi'_3(\eta)<0$ for all    $\eta\in\left(\max\{\lambda_1,0\},\, \lambda_2\right)$. Secondly, if $\phi_3(\eta)$ has a zero point $\eta_0\in\left(\lambda_1,\lambda_2\right)$ with $\eta_0\leq0$, then $\phi_3(\eta)>0$ for all    $\eta\in\left(\eta_0,\, \lambda_2\right)$, which means that $\psi'_3(\eta)>0$ for all    $\eta\in\left(\max\{\lambda_1,0\},\, \lambda_2\right)=(0,\,\lambda_2)$. Finally, if $\phi_3(\eta)$ has one zero point $\eta_0\in\left(\lambda_1,\lambda_2\right)$ with $\eta_0>0$, then \eqref{eq:s3psi3'sign-1} holds apparently. 
\end{proof}

The above lemma shows that $\psi_3$ has at most two zero points on the interval $(\max\{\lambda_1,0\}, \lambda_2)$. Similar to the problem \eqref{eq:s3M2}, in order to figure out whether or not the zero points  admit local-nonglobal minimizers of the problem \eqref{eq:s3M3}, we need further to define $W_3(\eta)\in\mathcal{R}^{n\times(n-1)}$ and $B_3(\eta)\in\mathcal{R}^{(n-1)\times(n-1)}$ by
\begin{equation*}
	\begin{cases}
		W_3(\eta)=\dfrac{r_{1}}{\lambda_{1}-\eta}\begin{bmatrix}v_2,&v_3,&\cdots,&v_n \end{bmatrix}-v_1\tilde{u}^T, \vspace{2mm}\\
		B_3(\eta)=W_3(\eta)^{T}(A-\eta I)W_3(\eta)
		=\tilde{B}_3(\eta)+(\lambda_{1}-\eta)\tilde{u}\tilde{u}^T,
	\end{cases}
	\quad\forall  \eta\in \left(\lambda_1,\lambda_2\right),
\end{equation*}
where 
\begin{equation*}
	\begin{cases}
		\tilde{B}_3(\eta)=\dfrac{r_1^2}{(\lambda_1-\eta)^2}\diag\left({\lambda_2-\eta},{\lambda_3-\eta},\cdots,{\lambda_n-\eta}\right)\\ 
		\tilde{u}=\left[\dfrac{r_{2}}{\lambda_{2}-\eta},\dfrac{r_{3}}{\lambda_{3}-\eta},\cdots,\dfrac{r_{n}}{\lambda_{n}-\eta}\right]^T.
	\end{cases}
\end{equation*}
If only $\max\{\lambda_1,0\}<\lambda_2$ and $r_1\ne0$, the matrices $W_3(\eta)$ and $B_3(\eta)$ satisfy the following properties:
\begin{equation}\label{eq:s3r(W)-detB-1}
	\begin{cases}
		\rank\left(W_3(\eta)\right)=n-1,\vspace{1mm}\\
		\nabla g\left(y_3(\eta)\right)^TW_3(\eta)=0, \vspace{1mm}\\
		\det{\left( B_3(\eta)\right)} =h_3(\eta)\phi_3(\eta),
	\end{cases}
	\,\forall\eta\in \left(\lambda_1,\lambda_2\right),
\end{equation}
where $h_3(\eta)=\dfrac{r_{1}^{2n-4}(\lambda_{2}-\eta)\cdots(\lambda_{n}-\eta)}{(\lambda_{1}-\eta)^{2n-5}}<0$. From \eqref{eq:s3r(W)-detB-1}, one can yield the following lemma by repeating the proof process of Lemma \ref{th:s3-B1-definite}.

\begin{lemma}\label{th:s3-B3-definite}
	Suppose that $\lambda_1<\lambda_2$ and $r_1\ne0$ for the problem \eqref{eq:s3M3}. Then for each $\eta\in\left(\lambda_1, \lambda_2\right)$, $B_3(\eta)\succ0$ if and only if $\phi_3(\eta)<0$.
\end{lemma}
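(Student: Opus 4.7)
The author flags that the argument should mimic the proof of Lemma \ref{th:s3-B1-definite} verbatim, so my plan is to reproduce the same two-directional strategy, with only the endpoint where $B_3(\eta)$ is automatically positive definite changed from $\eta\to(-\lambda_1)^-$ (in the $B_1$ case) to $\eta\to\lambda_1^+$. The forward direction is immediate: if $B_3(\eta)\succ 0$ then $\det(B_3(\eta))>0$, and since the third identity in \eqref{eq:s3r(W)-detB-1} gives $\det(B_3(\eta))=h_3(\eta)\phi_3(\eta)$ with $h_3(\eta)<0$, we must have $\phi_3(\eta)<0$.

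For the converse I would argue by contradiction: assume there is some $\eta_1\in(\lambda_1,\lambda_2)$ with $\phi_3(\eta_1)<0$ but $B_3(\eta_1)\not\succ 0$. Then $\det(B_3(\eta_1))=h_3(\eta_1)\phi_3(\eta_1)>0$, so $B_3(\eta_1)$ is nonsingular and must have at least one strictly negative eigenvalue (otherwise it would be positive definite). Next I would show that there is some $\varepsilon>0$ such that $B_3(\eta)\succ 0$ on $(\lambda_1,\lambda_1+\varepsilon]$: indeed the diagonal piece $\tilde{B}_3(\eta)$ has entries $\frac{r_1^2(\lambda_i-\eta)}{(\lambda_1-\eta)^2}$ which, since $\lambda_i-\eta>0$ for $i\ge 2$ and $(\lambda_1-\eta)^2\to 0^+$ as $\eta\to\lambda_1^+$, blows up to $+\infty$, while the rank-one perturbation $(\lambda_1-\eta)\tilde u\tilde u^T$ stays bounded (and is $O(\lambda_1-\eta)$ in magnitude); so $\tilde B_3$ dominates and $B_3\succ 0$ near $\lambda_1^+$.

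Pick any $\tilde\eta\in(\lambda_1,\lambda_1+\varepsilon]$ with $\tilde\eta<\eta_1$. Then $B_3(\tilde\eta)\succ 0$ but $B_3(\eta_1)$ has a negative eigenvalue, so by continuity of the eigenvalues of $B_3(\eta)$ on the compact interval $[\tilde\eta,\eta_1]$ there exists $\eta_2\in(\tilde\eta,\eta_1)$ with $\det(B_3(\eta_2))=0$, i.e.\ $\phi_3(\eta_2)=0$. But from \eqref{eq:s3-psi3-property-1} the function $\phi_3$ is strictly increasing on $(\lambda_1,\lambda_2)$, and $\eta_2<\eta_1$ gives $\phi_3(\eta_2)<\phi_3(\eta_1)<0$, contradicting $\phi_3(\eta_2)=0$. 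This completes the proof.

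The only non-routine step is the near-boundary assertion $B_3(\eta)\succ 0$ as $\eta\to\lambda_1^+$, which is the main obstacle — one has to verify that the negative rank-one correction $(\lambda_1-\eta)\tilde u\tilde u^T$ cannot overwhelm the blowing-up positive-definite diagonal part $\tilde B_3(\eta)$. Everything else (the determinant identity, the sign of $h_3$, the monotonicity of $\phi_3$) is already packaged in \eqref{eq:s3-psi3-property-1} and \eqref{eq:s3r(W)-detB-1}, so the argument transfers directly from Lemma \ref{th:s3-B1-definite}.
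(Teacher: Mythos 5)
Your proof is correct and follows essentially the same route as the paper, which simply states that Lemma \ref{th:s3-B3-definite} is obtained by repeating the proof of Lemma \ref{th:s3-B1-definite}: determinant sign via $\det(B_3(\eta))=h_3(\eta)\phi_3(\eta)$ with $h_3<0$, positive definiteness of $B_3(\eta)$ near the endpoint $\lambda_1^{+}$, an eigenvalue-continuity/intermediate-value argument producing a zero of $\phi_3$, and a contradiction with the strict monotonicity of $\phi_3$. Your explicit verification of the near-boundary step (the blowing-up diagonal part dominating the vanishing rank-one term) is exactly the detail the paper leaves as "easily verified" in the $B_1$ case.
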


Based on Lemmas \ref{th:s3mu*range-minusy} and \ref{th:s3-B3-definite}, and by following the proof of Theorem \ref{th:s3-main-th1}, one can also obtain a  necessary and sufficient test condition for a local-nonglobal minimizer of the problem \eqref{eq:s3M3} with a positive multiplier. It is stated in the following theorem and the corresponding proof is omitted. 

\begin{theorem}
	\label{th:s3-main-th2} 
	Suppose that $n\geq2$.  The problem \eqref{eq:s3M3} has a local-nonglobal minimizer $y^*$ with $\mu^*={1}/{\eta^{*}}>0$  if and only if  $\max\{\lambda_1,0\}<\eta^*<\lambda_2$,  $r_1\ne0$, $y^*=y_3(\eta^*)$, $\psi_3(\eta^*)=0$ and $\psi_3'(\eta^*)<0$.
\end{theorem}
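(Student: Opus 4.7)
My plan is to mirror the argument of Theorem \ref{th:s3-main-th1}, adapting it to the "negative" objective $-\|y\|^2$ of problem \eqref{eq:s3M3}. The key structural reason the same template works is that Lemma \ref{th:s3mu*range-minusy} plays the role of Lemma \ref{th:s3mu*range}, Lemma \ref{th:s3-B3-definite} plays the role of Lemma \ref{th:s3-B1-definite}, and the properties collected in \eqref{eq:s3-psi3-property-1} and \eqref{eq:s3r(W)-detB-1} are the exact analogues of \eqref{eq:s3nablag-psi'} and \eqref{eq:s3r(W)-detB}. The only reason the interval changes from $(\max\{-\lambda_2,0\},-\lambda_1)$ to $(\max\{\lambda_1,0\},\lambda_2)$ is the flip from $A+\eta I$ to $A-\eta I$; this flip is absorbed into $y_3$ and $\psi_3$.

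For the necessity direction, I would start by assuming $y^{*}$ is a local-nonglobal minimizer of \eqref{eq:s3M3} with $\mu^{*}=1/\eta^{*}>0$. Lemma \ref{th:s3mu*range-minusy} immediately delivers $\max\{\lambda_1,0\}<\eta^{*}<\lambda_2$, $r_1\neq 0$, $g(y^{*})=0$, and the first-order relation $(-I+\mu^{*}A)y^{*}+\mu^{*}b=0$, which upon dividing by $\mu^{*}$ rearranges to $y^{*}=-(A-\eta^{*}I)^{-1}b=y_3(\eta^{*})$; hence $\psi_3(\eta^{*})=g(y_3(\eta^{*}))=g(y^{*})=0$. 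To obtain $\psi_3'(\eta^{*})<0$, I would apply Theorems \ref{th:GTRE-strict-nonglobal-sufnec} and \ref{th:nonstrict-nonglobal-zero} to \eqref{eq:s3M3} to conclude that $y^{*}$ is strict and that the sufficient condition \eqref{eq:2SufCond} holds for the Lagrangian Hessian $2\mu^{*}(A-\eta^{*}I)$. By the second identity in \eqref{eq:s3r(W)-detB-1}, the columns of $W_3(\eta^{*})$ span the tangent space $\mathrm{Null}(\nabla g(y^{*})^T)$, so \eqref{eq:2SufCond} becomes $B_3(\eta^{*})\succ 0$ (after discarding the positive factor $\mu^{*}$). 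Lemma \ref{th:s3-B3-definite} then yields $\phi_3(\eta^{*})<0$, and since $\eta^{*}>0$, the formula $\psi_3'(\eta)=2\eta\phi_3(\eta)$ from \eqref{eq:s3-psi3-property-1} gives $\psi_3'(\eta^{*})<0$.

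For the sufficiency direction, I would verify the three conditions of Theorem \ref{th:GTRE-strict-nonglobal-sufnec} applied to \eqref{eq:s3M3}. Condition (i) is $g(y^{*})=\psi_3(\eta^{*})=0$. Condition (iii) ($A\neq 0$) follows because $\max\{\lambda_1,0\}<\eta^{*}<\lambda_2$ forces $\lambda_2>0$. For condition (ii), the formula $y^{*}=y_3(\eta^{*})$ together with $(A-\eta^{*}I)y^{*}=-b$ gives the first-order condition \eqref{eq:1NecCond}, and $\nabla g(y^{*})\neq 0$ comes from the second line of \eqref{eq:s3-psi3-property-1} combined with $\eta^{*}>0$. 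Finally, $\eta^{*}>0$ together with $\psi_3'(\eta^{*})=2\eta^{*}\phi_3(\eta^{*})<0$ yields $\phi_3(\eta^{*})<0$, and Lemma \ref{th:s3-B3-definite} gives $B_3(\eta^{*})\succ 0$; using the range identity from \eqref{eq:s3r(W)-detB-1}, this upgrades to the sufficient second-order condition \eqref{eq:2SufCond} on $2\mu^{*}(A-\eta^{*}I)$. That $A-\eta^{*}I\not\succeq 0$ (needed so that $y^{*}$ is nonglobal) holds because $\lambda_1<\eta^{*}$ implies $\lambda_1-\eta^{*}<0$.

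The only place I expect to have to think carefully is the equivalent of the "$A_1\neq 0$" verification in Theorem \ref{th:s3-main-th1}: the assumption $n\geq 2$ in Lemma \ref{th:s3mu*range-minusy} (illustrated by Remark \ref{rmk:s3-ngeq2}) is needed for this translation, and I would want to make sure that the one-dimensional pathology does not resurface in my "$\Longleftarrow$" step. Since the sufficiency argument never appeals to $n\geq 2$ directly (only to $\lambda_2>0$ and to the well-definedness of $W_3(\eta^{*})\in\mathbb{R}^{n\times(n-1)}$, which requires $n\geq 2$), I do not anticipate any real obstacle, but this is the one place where a careless transcription of the Theorem \ref{th:s3-main-th1} proof would slip.
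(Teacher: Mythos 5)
Your proposal is correct and takes essentially the approach the paper intends: the paper omits the proof of Theorem \ref{th:s3-main-th2}, stating only that it follows the proof of Theorem \ref{th:s3-main-th1} using Lemmas \ref{th:s3mu*range-minusy} and \ref{th:s3-B3-definite}, which is exactly your adaptation (sign flip to $A-\eta^{*}I$, the properties \eqref{eq:s3-psi3-property-1} and \eqref{eq:s3r(W)-detB-1}, and Theorems \ref{th:GTRE-strict-nonglobal-sufnec} and \ref{th:nonstrict-nonglobal-zero} applied to \eqref{eq:s3M3}). Your attention to where $n\geq2$ enters (via Lemma \ref{th:s3mu*range-minusy} and the existence of $\lambda_2$ and $W_3$) is also consistent with the paper's treatment.
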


The local-nonglobal minimizers of the problem \eqref{eq:s3M3} are just the local-nonglobal maximizers of the problem \eqref{eq:s3M2}. Moreover, $y_1(\eta)$, $y_3(\eta)$, $\psi_1(\eta)$ and $\psi_3(\eta)$ have the following relations:
\begin{equation*}
	y_3(\eta)=y_1(-\eta)\,\,\mbox{and}\,\ \psi_3(\eta)=\psi_1(-\eta),\quad\forall\eta\in\left(\lambda_{1},\lambda_2\right).
\end{equation*}
So Theorem \ref{th:s3-main-th2} may be equivalently expressed by using $y_1(\eta)$ and $\psi_1(\eta)$. 

\begin{corollary}
	\label{th:s3-main-th2-1} 
	Suppose that $n\geq2$.  The problem \eqref{eq:s3M3} has a local-nonglobal minimizer $y^*$ with $\mu^*={1}/{\eta^{*}}>0$  if and only if $\max\{\lambda_1,0\}<\eta^*<\lambda_2$,  $r_1\ne0$, $y^*=y_1(-\eta^*)$, $\psi_1(-\eta^*)=0$ and $\psi_1'(-\eta^*)>0$.
\end{corollary}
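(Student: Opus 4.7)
The plan is to deduce Corollary \ref{th:s3-main-th2-1} directly from Theorem \ref{th:s3-main-th2} by invoking the two substitution identities
\begin{equation*}
y_3(\eta)=y_1(-\eta),\qquad \psi_3(\eta)=\psi_1(-\eta),\qquad \forall\eta\in(\lambda_1,\lambda_2),
\end{equation*}
which are stated immediately before the corollary. These identities are just a change of variable in the definitions \eqref{eq:y-psi(eta)} and \eqref{eq:y3-psi3(eta)}: substituting $\eta\mapsto -\eta$ in $y_1$ gives $-\sum_i r_i/(\lambda_i-\eta)\,v_i$, which equals $y_3(\eta)$, and similarly $\psi_1(-\eta)$ matches $\psi_3(\eta)$ coefficient by coefficient. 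Note also that the map $\eta\mapsto -\eta$ sends $(\lambda_1,\lambda_2)$ onto $(-\lambda_2,-\lambda_1)$, so the domains are consistent; in particular $\max\{\lambda_1,0\}<\eta^*<\lambda_2$ is equivalent to $\max\{-\lambda_2,0\}<-\eta^*<-\lambda_1$ when $\lambda_1<\lambda_2$.

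Next I would differentiate the second identity using the chain rule to obtain
\begin{equation*}
\psi_3'(\eta)=-\psi_1'(-\eta),\qquad \forall\eta\in(\lambda_1,\lambda_2),
\end{equation*}
so that the sign condition $\psi_3'(\eta^*)<0$ in Theorem \ref{th:s3-main-th2} translates precisely into $\psi_1'(-\eta^*)>0$, which is the derivative condition in the corollary. Likewise, $\psi_3(\eta^*)=0$ becomes $\psi_1(-\eta^*)=0$ and $y^*=y_3(\eta^*)$ becomes $y^*=y_1(-\eta^*)$, while the conditions $\max\{\lambda_1,0\}<\eta^*<\lambda_2$ and $r_1\ne0$ carry over unchanged.

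Assembling these pieces, both directions of the equivalence in the corollary follow from the corresponding directions of Theorem \ref{th:s3-main-th2} after relabelling. There is no real obstacle here: the only thing to be careful about is the sign flip induced by the chain rule on $\psi_3'(\eta)=-\psi_1'(-\eta)$, which is precisely what turns the strict inequality $\psi_3'(\eta^*)<0$ into $\psi_1'(-\eta^*)>0$ in the restated form. Since Theorem \ref{th:s3-main-th2} already handles the necessary and sufficient characterization via $(y_3,\psi_3)$, the corollary is essentially a cosmetic rewriting in terms of the single pair $(y_1,\psi_1)$ previously introduced for \eqref{eq:s3M2}, which is convenient for the unified bisection algorithm developed in the next section.
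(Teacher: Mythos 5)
Your proposal is correct and matches the paper's (implicit) argument: the paper itself states the identities $y_3(\eta)=y_1(-\eta)$, $\psi_3(\eta)=\psi_1(-\eta)$ and presents the corollary as a direct restatement of Theorem \ref{th:s3-main-th2}, with the chain-rule sign flip $\psi_3'(\eta)=-\psi_1'(-\eta)$ accounting for the reversal of the derivative inequality. Your additional check that $\eta\mapsto-\eta$ maps $\left(\max\{\lambda_1,0\},\lambda_2\right)$ into the domain $\left(-\lambda_2,-\lambda_1\right)$ of $\psi_1$ is exactly the right point of care, and nothing further is needed.
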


Let us consider the local-nonglobal maximizers of Example \ref{ex:psi-roots}, that is, we consider the following problem.
\begin{example} \label{ex:psi-roots-1}
	\begin{equation} \label{eq:psi-roots-1-1}
		\min\, \{-y_{1}^{2}-y_{2}^{2}\,\,\,|\, g(y_{1},y_{2})=-4y_{1}^{2}+y_{2}^{2}+2y_{1}+16y_{2}+45=0\}. 
	\end{equation}
	Applying Corollary \ref{th:s3-main-th2-1} to the problem \eqref{eq:psi-roots-1-1}, one can obtain that it has no local-nonglobal minimizers with positive multipliers.
\end{example}

Similarly to the problem  \eqref{eq:s3M2}, the case ``$\mu^*<0$" in the problem  \eqref{eq:s3M3} can  be also regarded as the case ``$-\mu^*>0$" with the equality constraint ``$-g(y)=y^{T}(-A) y+2(-b)^{T}y+(-c)=0$". So one defines a vector function $y_4(\eta)$ and a function $\psi_4(\eta)$ by
\begin{equation}\label{eq:y4-psi4(eta)}
	\begin{cases}
		y_4(\eta):=-(-A-\eta I)^{-1}(-b)=y_1(\eta),\vspace{1mm}\\
		\psi_4(\eta):=-g(y_4(\eta))=-g(y_1(\eta))=-\psi_1(\eta),
	\end{cases}
	\quad\forall\eta\in\left(-\lambda_n,\,-\lambda_{n-1}\right).
\end{equation}
And then the following corollary is obtained by applying Theorem \ref{th:s3-main-th2} to \eqref{eq:y4-psi4(eta)}.

\begin{corollary}
	\label{th:s3-mu-negative-th2} 
	Suppose that $n\geq2$.  The problem \eqref{eq:s3M3} has a local-nonglobal minimizer $y^*$ with $\mu^*=-{1}/{\eta^{*}}<0$  if and only if  $\max\{-\lambda_n,0\}<\eta^*<-\lambda_{n-1}$,  $r_n\ne0$, $y^*=y_4(\eta^*)$, $\psi_4(\eta^*)=0$ and $\psi_4'(\eta^*)<0$.
\end{corollary}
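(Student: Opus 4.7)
The plan is to reduce the corollary to Theorem \ref{th:s3-main-th2} via the symmetry between negative multipliers and negation of the equality constraint. The key observation is that in problem \eqref{eq:s3M3}, the equality $g(y)=0$ defines the same feasible set as $\tilde g(y) := -g(y) = 0$, and the Lagrangian stationarity condition for the latter with multiplier $\tilde\mu$ coincides with that for the former with $\mu = -\tilde\mu$. Hence local-nonglobal minimizers of \eqref{eq:s3M3} with $\mu^*<0$ are in bijective correspondence with local-nonglobal minimizers of the auxiliary problem
\[
\min_{y\in\mathcal{R}^n} -\Vert y\Vert^2 \quad \mbox{s.t.} \quad \tilde g(y) := y^T(-A)y + 2(-b)^Ty + (-c) = 0,
\]
taken with multiplier $\tilde\mu^* = -\mu^* > 0$.

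Next I would set up the auxiliary problem in the eigenbasis. Its coefficient matrix is $-A$, whose sorted eigenvalues are $\tilde\lambda_i = -\lambda_{n+1-i}$ with eigenvectors $\tilde v_i = v_{n+1-i}$; the corresponding projections are $\tilde r_i = \tilde v_i^T(-b) = -r_{n+1-i}$. In particular $\tilde\lambda_1 = -\lambda_n$, $\tilde\lambda_2 = -\lambda_{n-1}$, and $\tilde r_1\neq 0$ iff $r_n\neq 0$. A direct computation shows that the functions analogous to $y_3$ and $\psi_3$ for the auxiliary problem satisfy
\[
\tilde y_3(\eta) = -(-A-\eta I)^{-1}(-b) = -(A+\eta I)^{-1}b = y_1(\eta) = y_4(\eta),
\]
\[
\tilde\psi_3(\eta) = \tilde g(\tilde y_3(\eta)) = -g(y_1(\eta)) = -\psi_1(\eta) = \psi_4(\eta),
\]
and therefore $\tilde\psi_3'(\eta)=\psi_4'(\eta)$.

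Finally, I would apply Theorem \ref{th:s3-main-th2} to the auxiliary problem with $\tilde\mu^* = 1/\eta^* > 0$. That theorem (whose assumption $n\geq 2$ is trivially preserved) delivers a local-nonglobal minimizer if and only if
\[
\max\{\tilde\lambda_1,0\} < \eta^* < \tilde\lambda_2,\ \ \tilde r_1\neq 0,\ \ y^* = \tilde y_3(\eta^*),\ \ \tilde\psi_3(\eta^*)=0,\ \ \tilde\psi_3'(\eta^*)<0,
\]
which, via the dictionary above, is precisely the conjunction $\max\{-\lambda_n,0\}<\eta^*<-\lambda_{n-1}$, $r_n\neq 0$, $y^*=y_4(\eta^*)$, $\psi_4(\eta^*)=0$, $\psi_4'(\eta^*)<0$. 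The multiplier correspondence $\mu^* = -\tilde\mu^* = -1/\eta^*$ then gives $\mu^*<0$, completing the proof.

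I expect no real obstacle: the argument is pure bookkeeping. The only points that merit a line of verification are (i) the sign-of-eigenvector ambiguity $\tilde v_i = \pm v_{n+1-i}$ is harmless because the quantities of interest ($\psi_1,\phi_1,B_1,\ldots$) depend only on the $r_i^2$, and (ii) the bijection between local-nonglobal minimizers of $(\min -\Vert y\Vert^2, g=0)$ with $\mu^*<0$ and those of $(\min -\Vert y\Vert^2, -g=0)$ with $\tilde\mu^*>0$ is clear since feasible sets and objectives match identically, so the notions of local minimizer coincide and the two problems share the same global value.
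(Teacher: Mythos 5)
Your proposal is correct and is essentially the paper's own argument: the paper likewise treats the case $\mu^*<0$ as $-\mu^*>0$ for the negated constraint $-g(y)=0$, defines $y_4(\eta)=y_1(\eta)$ and $\psi_4(\eta)=-\psi_1(\eta)$ accordingly, and obtains the corollary by applying Theorem \ref{th:s3-main-th2} to this auxiliary problem. Your bookkeeping (eigenvalue reversal $\tilde\lambda_i=-\lambda_{n+1-i}$, $\tilde r_1\ne0\Leftrightarrow r_n\ne0$, multiplier sign flip) just makes explicit the details the paper leaves implicit.
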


Similarly to Corollary \ref{th:s3-main-th2-1}, the above corollary may be also restated by using $y_1(\eta)$ and $\psi_1(\eta)$. 

\begin{corollary}
	\label{th:s3-mu-negative-th2-1} 
	Suppose that $n\geq2$.  The problem \eqref{eq:s3M3} has a local-nonglobal minimizer $y^*$ with $\mu^*=-{1}/{\eta^{*}}<0$  if and only if  $\max\{-\lambda_n,0\}<\eta^*<-\lambda_{n-1}$,  $r_n\ne0$,  $y^*=y_1(\eta^*)$, $\psi_1(\eta^*)=0$ and $\psi_1'(\eta^*)>0$.
\end{corollary}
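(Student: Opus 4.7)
The plan is to observe that Corollary \ref{th:s3-mu-negative-th2-1} is nothing more than a cosmetic re-expression of the already-proved Corollary \ref{th:s3-mu-negative-th2}, obtained by replacing the auxiliary functions $y_4(\eta)$ and $\psi_4(\eta)$ by $y_1(\eta)$ and $\psi_1(\eta)$. Thus the entire task reduces to verifying two elementary identities on the relevant interval and tracking a single sign flip.

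First, I would check the identity $y_4(\eta)=y_1(\eta)$ directly from the definition \eqref{eq:y4-psi4(eta)}: since
$$
y_4(\eta)=-(-A-\eta I)^{-1}(-b)=-(A+\eta I)^{-1}b=y_1(\eta),
$$
where the two minus signs in the matrix cancel, and $A+\eta I$ is nonsingular on $(-\lambda_n,-\lambda_{n-1})$ because $\eta\ne -\lambda_i$ for every $i$ on that interval. Then $\psi_4(\eta)=-g(y_4(\eta))=-g(y_1(\eta))=-\psi_1(\eta)$ follows immediately, and differentiating gives $\psi_4'(\eta)=-\psi_1'(\eta)$.

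Next, I would transport these identities through the statement of Corollary \ref{th:s3-mu-negative-th2}. The range condition $\max\{-\lambda_n,0\}<\eta^*<-\lambda_{n-1}$ and the nondegeneracy condition $r_n\ne 0$ involve only the spectral data of $A$ and are unchanged. The equation $y^*=y_4(\eta^*)$ becomes $y^*=y_1(\eta^*)$; the equation $\psi_4(\eta^*)=0$ becomes $-\psi_1(\eta^*)=0$, i.e.\ $\psi_1(\eta^*)=0$; and the strict inequality $\psi_4'(\eta^*)<0$ becomes $-\psi_1'(\eta^*)<0$, i.e.\ $\psi_1'(\eta^*)>0$. This is exactly the system of conditions asserted in Corollary \ref{th:s3-mu-negative-th2-1}, so the two characterizations are logically equivalent.

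There is really no obstacle to speak of: the only thing that could go wrong is a bookkeeping slip on one of the sign reversals, in particular the one that flips the strict inequality on the derivative. I would therefore structure the written proof as a single short paragraph that states the two identities $y_4\equiv y_1$ and $\psi_4\equiv -\psi_1$ on $(-\lambda_n,-\lambda_{n-1})$, observes that these identities convert $\psi_4'<0$ into $\psi_1'>0$, and then invokes Corollary \ref{th:s3-mu-negative-th2} to conclude.
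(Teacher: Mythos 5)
Your proposal is correct and matches the paper's own route: the paper defines $y_4$ and $\psi_4$ in \eqref{eq:y4-psi4(eta)} precisely so that $y_4\equiv y_1$ and $\psi_4\equiv-\psi_1$ on $\left(-\lambda_n,-\lambda_{n-1}\right)$, and obtains Corollary \ref{th:s3-mu-negative-th2-1} from Corollary \ref{th:s3-mu-negative-th2} by exactly the sign-translation you describe, including the flip of $\psi_4'(\eta^*)<0$ into $\psi_1'(\eta^*)>0$.
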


Applying Corollary \ref{th:s3-mu-negative-th2-1} to Example \ref{ex:psi-roots-1}, one can verify that $\eta_2=1.1829$ admits a local-nonglobal minimizer of Example \ref{ex:psi-roots-1} with the multiplier $-1/\eta_2<0$.

Combining Lemmas \ref{th:s3mu*range-minusy} and \ref{th:psi3-zerosnumber}, Theorem \ref{th:s3-main-th2}, Corollaries \ref{th:s3-main-th2-1}, \ref{th:s3-mu-negative-th2} and \ref{th:s3-mu-negative-th2-1}, one can obtain a  novel result as follows: under the assumption that $\exists\mu_1\in\mathcal{R}$ such that $A_0+\mu_1A_1\prec0$, the problem $(GTRE)$ has at most one local-nonglobal minimizer.

\begin{theorem} \label{th:s3-number-th2} 
	Suppose that $A_0$ and $A_1$ are jointly negative definite, that is,  $\exists\mu_1\in\mathcal{R}$ such that $A_0+\mu_1A_1\prec0$. Then the problem $(GTRE)$ has at most one local-nonglobal minimizer.
\end{theorem}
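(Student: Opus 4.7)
The plan is to mirror the structure used to prove Theorem \ref{th:s3-number-th1}. First, using $A_0+\mu_1A_1\prec 0$, I would perform a Cholesky factorization $-(A_0+\mu_1A_1)=LL^T$ and apply the change of variables \eqref{eq:s3-Qq-trans} with $Q=L^{-1}$ to convert $(GTRE)$ into the canonical form \eqref{eq:s3M3}; the key observation is that $A=QA_1Q^T$ and $A_1$ share the same inertia. Theorem \ref{th:s3-main-th2} and Corollary \ref{th:s3-mu-negative-th2} then classify the local-nonglobal minimizers of \eqref{eq:s3M3} into two families: those with $\mu^*>0$, parameterized by zeros of $\psi_3$ on $I_+:=(\max\{\lambda_1,0\},\lambda_2)$ satisfying $\psi_3'(\eta^*)<0$; and those with $\mu^*<0$, parameterized by zeros of $\psi_4$ on $I_-:=(\max\{-\lambda_n,0\},-\lambda_{n-1})$ satisfying $\psi_4'(\eta^*)<0$. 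Lemma \ref{th:psi3-zerosnumber} (and its analog for $\psi_4$ through the symmetry $\psi_4=-\psi_1$) guarantees that within each family there is at most one such zero, since the derivative is negative only on an initial subinterval on which $\psi_3$ (respectively $\psi_4$) is strictly monotone.

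Second, I would rule out simultaneous occurrence of minimizers from the two families. For $n\geq 3$ this is an easy pigeonhole: $I_+$ non-empty forces $\lambda_2>0$ while $I_-$ non-empty forces $\lambda_{n-1}<0$, and $\lambda_2\leq\lambda_{n-1}$ makes these incompatible. The same reasoning settles the semidefinite subcases of $n=2$: if $A_1\succeq 0$ then $-\lambda_{n-1}\leq 0$ and $I_-$ is empty, and dually if $A_1\preceq 0$ then $I_+$ is empty.

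The only delicate step is $n=2$ with $\lambda_1<0<\lambda_2$, where $I_+=(0,\lambda_2)$ and $I_-=(0,-\lambda_1)$ are both non-empty and the pigeonhole argument fails; I expect this to be the main obstacle. My plan is to merge the two sign characterizations through the unified function $\psi_1$: using the identities $\psi_3(\eta)=\psi_1(-\eta)$ and $\psi_4(\eta)=-\psi_1(\eta)$, both families together correspond to zeros of $\psi_1$ lying in $(-\lambda_2,-\lambda_1)\setminus\{0\}$ with $\psi_1'>0$. Since $\phi_1$ is strictly decreasing from $+\infty$ to $-\infty$ on $(-\lambda_2,-\lambda_1)$ by \eqref{eq:s3nablag-psi'}, it has a unique zero $\eta_0\in(-\lambda_2,-\lambda_1)$, and the factorization $\psi_1'(\eta)=2\eta\phi_1(\eta)$ shows that $\{\eta:\psi_1'(\eta)>0\}$ equals either $(0,\eta_0)$ or $(\eta_0,0)$, i.e., a single interval on which $\psi_1$ is strictly increasing and therefore admits at most one zero. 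Concatenating this with the earlier cases yields the desired bound of at most one local-nonglobal minimizer.
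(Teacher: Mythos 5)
Your argument is essentially the paper's own proof: transform $(GTRE)$ to \eqref{eq:s3M3}, get at most one minimizer per multiplier-sign family, rule out coexistence for $n\geq3$ (and the semidefinite $n=2$ subcases) by the eigenvalue pigeonhole $\lambda_2>0$ versus $\lambda_{n-1}<0$, and for the indefinite $n=2$ case merge both families into zeros of $\psi_1$ with $\psi_1'>0$ on opposite sides of $0$, which is impossible by the sign structure of $\psi_1'=2\eta\phi_1$ --- your re-derivation of that structure via the monotonicity of $\phi_1$ is precisely the content of Lemma \ref{th:psi-zerosnumber}, which the paper cites directly. The only omission is the trivial case $n=1$ (excluded by the $n\geq2$ hypotheses of Theorem \ref{th:s3-main-th2} and its corollaries), which is immediate since the feasible set then contains at most two points.
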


\begin{proof}
	If $n=1$, the desired conclusion is true apparently.  So we need only to consider the case: ``$n\geq2$". Note that, under the assumption that $\exists\mu_1\in\mathcal{R}$ such that $A_0+\mu_1A_1\prec0$, the problem $(GTRE)$  can be exactly transformed into the problem \eqref{eq:s3M3}. Thus one needs only to prove that,  at the case of ``$n\geq2$", the problem \eqref{eq:s3M3} has at most one local-nonglobal minimizer. 
	
	From Lemmas \ref{th:s3mu*range-minusy} and \ref{th:psi3-zerosnumber}, Theorem \ref{th:s3-main-th2} and Corollary \ref{th:s3-mu-negative-th2}, the problem \eqref{eq:s3M3} has possibly at most two  local-nonglobal minimizers $y_3(\eta^*_3)$ and $y_4(\eta^*_4)$.  Wherein they have to satisfy that 
	\begin{equation}
		\label{eq:s3-p2-lambda2-pos-neg}
		\begin{array}{l}
			r_1\ne0,\, \max\{\lambda_1,0\}<\eta^*_3<\lambda_2\leq \cdots\leq\lambda_n,\,
			\psi_3(\eta^*_3)=0,\, \psi'_3(\eta^*_3)<0, \mbox{ and} \vspace{1mm}\\
			r_n\ne0,\, \max\{-\lambda_n,0\}<\eta^*_4<-\lambda_{n-1} \leq\cdots\leq-\lambda_1, \psi_4(\eta^*_4)=0, \psi'_4(\eta^*_4)<0,
		\end{array}
	\end{equation}
	which implies $\lambda_2>0$ and $\lambda_{n-1}<0$.
	If $n\geq3$, \eqref{eq:s3-p2-lambda2-pos-neg}  makes $\lambda_{2}$ satisfy $0<\lambda_{2}<0$,
	which is a contradiction.
	If $n=2$, by using Corollaries \ref{th:s3-main-th2-1} and \ref{th:s3-mu-negative-th2-1},  \eqref{eq:s3-p2-lambda2-pos-neg} can be reformulated into $r_1r_2\ne0$, $-\lambda_{2}<-\eta^*_3<0 <\eta^*_4<-\lambda_{1}$, $\psi_1(-\eta^*_3)=\psi_1(\eta^*_4)=0$, $\psi'_1(-\eta^*_3)>0$ and $\psi'_1(\eta^*_4)>0$,
	which is also a contradiction according to Lemma \ref{th:psi-zerosnumber}. Thus it is impossible that there exist both $y_3(\eta^*_3)$ and $y_4(\eta^*_4)$  simultaneously.
\end{proof}

Combining Theorems \ref{th:ineq-nonglobal=eq-nonglobal} and \ref{th:s3-number-th2}, one immediately gets the following corollary.

\begin{corollary} \label{th:s3-number-th2-1}
	Suppose that  $A_0$ and $A_1$ are jointly negative definite. Then the problem $(GTR)$ has at most one local-nonglobal minimizer.
\end{corollary}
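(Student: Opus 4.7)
The plan is to derive this corollary directly from the two results already established in the excerpt, exactly in the spirit of the proof of Corollary \ref{th:s3-number-th1-1}. First, I would observe that joint negative definiteness is a special case of joint definiteness per Definition \ref{df:Joint-Definite}, so every earlier result requiring only joint definiteness is available here; in particular Theorem \ref{th:ineq-nonglobal=eq-nonglobal}, which equates local-nonglobal minimizers of $(GTR)$ with those local-nonglobal minimizers of $(GTRE)$ having strictly positive multiplier, applies whenever $n\geq 2$.

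For the main case $n\geq 2$, I would argue that if $x^{*}$ is any local-nonglobal minimizer of $(GTR)$, then by Theorem \ref{th:ineq-nonglobal=eq-nonglobal} it is also a local-nonglobal minimizer of the associated $(GTRE)$. Hence the cardinality of the local-nonglobal minimizer set of $(GTR)$ is bounded above by that of $(GTRE)$. By Theorem \ref{th:s3-number-th2}, the latter is at most one, which yields the desired conclusion.

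It remains to dispatch the trivial case $n=1$. Here the constraint $f_{1}(x)\leq 0$ is a univariate quadratic (or linear) inequality, so the feasible region is a finite union of at most two closed intervals on $\mathbb{R}$, and $f_{0}$ is itself a univariate quadratic. A direct inspection shows that $(GTR)$ has only finitely many candidate local minimizers (boundary points of the feasible set and, when applicable, the unconstrained minimizer of $f_{0}$), among which at least one must be global; elementary enumeration then gives at most one local-nonglobal minimizer.

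I do not anticipate a substantive obstacle: the work has been done in the preceding two theorems, and the present statement is essentially a repackaging. The only small point that deserves care is the $n=1$ case, since Theorem \ref{th:ineq-nonglobal=eq-nonglobal} explicitly requires $n\geq 2$ (cf.\ Remark \ref{rmk:s2-cexmpl-3}); treating $n=1$ by hand is straightforward but must not be forgotten.
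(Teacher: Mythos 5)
Your proposal is correct and follows essentially the same route as the paper: for $n\geq2$ combine Theorem \ref{th:ineq-nonglobal=eq-nonglobal} with Theorem \ref{th:s3-number-th2}, and dispose of $n=1$ separately (the paper simply calls it apparent). One small caveat in your $n=1$ sketch: the claim that at least one candidate ``must be global'' can fail because $(GTR)$ may be unbounded below (e.g.\ the problem in Remark \ref{rmk:s2-cexmpl-3}), but the at-most-one conclusion still holds by direct inspection of the univariate cases, so this does not affect the result.
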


\begin{proof}
	If $n=1$, the desired conclusion is true apparently. And if $n\geq2$, the desired conclusion follows directly from Theorems \ref{th:ineq-nonglobal=eq-nonglobal} and \ref{th:s3-number-th2}.
\end{proof}

\begin{remark}
	If $A_1$ is an identity matrix, then the results in Theorem \ref{th:s3-number-th2} and Corollary \ref{th:s3-number-th2-1} are reduced to be just the Mart\'{i}nez's \cite{martinez1994}. So the upper bound of the number of the local-nonglobal minimizers given by Theorem \ref{th:s3-number-th2} and Corollary \ref{th:s3-number-th2-1} is {\bf tight}. 
\end{remark}

\section{Computation of the local-nonglobal minimizers} \label{sec-computation}
In this section, we discuss how to compute all the local-nonglobal minimizers of  $(GTRE)$ with $n\geq2$ under the joint definiteness condition  (the local-nonglobal minimizers of $(GTR)$ are also found by Theorem \ref{th:ineq-nonglobal=eq-nonglobal}). Firstly, for any symmetric matrices $A_0$ and $A_1$, one can use Guo-Higham-Tisseur's algorithm \cite{guo2010} to determine whether they both are jointly definite or not.  If their joint definiteness holds, one finds $A_0+\mu_{1}A_1={sgn}\,LL^T$ from Guo-Higham-Tisseur's algorithm, where $L$ is  a lower triangular matrix with positive diagonal elements and ${sgn}$ is a sign number: ${sgn}=1$ or $-1$. Secondly, by using the formula \eqref{eq:s3-Qq-trans}, one transforms equivalently  $(GTRE)$ into the problem \eqref{eq:s3M2} (or \eqref{eq:s3M3}). Thirdly, one solves \eqref{eq:s3M2} (or \eqref{eq:s3M3}) to find all its local-nonglobal minimizers $y^*$. Finally, one retransforms $y^*$ into $x^*$ by using the formula \eqref{eq:s3-Qq-trans}. So the key issue is how to compute all the local-nonglobal minimizers of \eqref{eq:s3M2} (or \eqref{eq:s3M3}).

In order to compute the local-nonglobal minimizers of the problems \eqref{eq:s3M2} and \eqref{eq:s3M3}, we present two lemmas to provide a theoretical basis for our algorithm design. 

\begin{lemma}
	\label{th:s4-psieta1>0}
	Suppose that $\max\{-\lambda_2,0\}<-\lambda_1$ and $r_1\ne0$. Then the problem \eqref{eq:s3M2} has a local-nonglobal minimizer  with $\mu^{*}>0$ $\Longleftrightarrow$  $\exists\,\tilde{\eta}\in\left(\max\{-\lambda_2,0\},\,-\lambda_1\right)$ such that $\psi_1(\tilde{\eta})>0$, where $\psi_1(\eta)$ is defined in \eqref{eq:y-psi(eta)}.
\end{lemma}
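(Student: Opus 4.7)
The plan is to convert this into a statement purely about the scalar function $\psi_1$ on the interval $I:=(\max\{-\lambda_2,0\},-\lambda_1)$ and then invoke Theorem \ref{th:s3-main-th1} together with the sign classification of $\psi_1'$ recorded in Remark \ref{rmk:s3-psi1-pos-zeros}. By Theorem \ref{th:s3-main-th1}, problem \eqref{eq:s3M2} has a local-nonglobal minimizer with $\mu^*>0$ if and only if $\psi_1$ has a zero $\eta^*\in I$ at which $\psi_1'(\eta^*)<0$. Hence the lemma reduces to showing that such a zero exists if and only if $\psi_1$ takes a positive value somewhere on $I$.

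For the necessity direction, I would simply use the strict inequality $\psi_1'(\eta^*)<0$ given by Theorem \ref{th:s3-main-th1}. Since $\psi_1$ is continuously differentiable on $I$ and strictly decreasing through $\eta^*$, there is a small $\varepsilon>0$ with $\eta^*-\varepsilon\in I$ and $\psi_1(\eta^*-\varepsilon)>\psi_1(\eta^*)=0$; this $\tilde\eta:=\eta^*-\varepsilon$ is the desired point.

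For the sufficiency direction, suppose $\psi_1(\tilde\eta)>0$ for some $\tilde\eta\in I$. The key tool is Remark \ref{rmk:s3-psi1-pos-zeros}, which asserts that either $\psi_1'<0$ throughout $I$, or there exists an interior point $\eta_0\in I$ with $\psi_1'>0$ on $(\max\{-\lambda_2,0\},\eta_0)$, $\psi_1'(\eta_0)=0$, and $\psi_1'<0$ on $(\eta_0,-\lambda_1)$. Combined with the limit $\psi_1(\eta)\to-\infty$ as $\eta\to(-\lambda_1)^-$ from \eqref{eq:s3nablag-psi'}, I want to apply the intermediate value theorem on a subinterval on which $\psi_1'$ is strictly negative. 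In the first case this is immediate: $\psi_1$ is strictly decreasing, so the unique zero of $\psi_1$ on $(\tilde\eta,-\lambda_1)$ automatically has $\psi_1'<0$. In the second case I would show that $\psi_1(\eta_0)\geq\psi_1(\tilde\eta)>0$ by monotonicity of $\psi_1$ on each of the two subintervals (if $\tilde\eta\le\eta_0$, use the increase of $\psi_1$ on $(\max\{-\lambda_2,0\},\eta_0)$; if $\tilde\eta>\eta_0$, use that $\psi_1$ decreases on $(\eta_0,-\lambda_1)$ so $\psi_1(\eta_0)>\psi_1(\tilde\eta)$). Then IVT on $(\eta_0,-\lambda_1)$, where $\psi_1'<0$, produces a zero $\eta^*$ with $\psi_1'(\eta^*)<0$. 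Applying Theorem \ref{th:s3-main-th1} to $\eta^*$ yields the local-nonglobal minimizer $y^*=y_1(\eta^*)$ with $\mu^*=1/\eta^*>0$.

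The only mildly delicate step is the sufficiency direction in the second subcase, where one must make sure the zero is extracted on the decreasing branch rather than on the increasing branch. This is handled cleanly by first propagating the positivity of $\psi_1$ from $\tilde\eta$ to the maximizer $\eta_0$, then applying IVT between $\eta_0$ and $-\lambda_1$. Everything else is bookkeeping.
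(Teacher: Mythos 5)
Your proposal is correct and follows essentially the same route as the paper: both directions reduce to Theorem \ref{th:s3-main-th1}, using the sign structure of $\psi_1'$ from Remark \ref{rmk:s3-psi1-pos-zeros} together with $\psi_1(\eta)\to-\infty$ as $\eta\to(-\lambda_1)^-$ and the intermediate value theorem. The only cosmetic difference is that you extract the zero directly on the decreasing branch (after propagating positivity to $\eta_0$), whereas the paper picks a zero via IVT and rules out $\psi_1'(\eta^*)\geq 0$ by contradiction; both arguments are sound.
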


\begin{proof}
	{\bf ``$\Longrightarrow$". } Assume that the problem \eqref{eq:s3M2} has  a local-nonglobal minimizer $y^*$  with $\mu^*={1}/{\eta^{*}}>0$.  From Theorem \ref{th:s3-main-th1}, it holds that ${\eta^*}\in\left(\max\{-\lambda_2,0\},\,-\lambda_1\right)$, $y^*=y_1(\eta^*)$, $\psi_1(\eta^*)=0$ and $\psi'_1(\eta^*)<0$. Then by Lemma \ref{th:psi-zerosnumber} (Remark \ref{rmk:s3-psi1-pos-zeros}), $\exists\epsilon>0$ such that $\psi'_1(\eta)<0$ for all $\eta\in[\eta^*-\epsilon,\,-\lambda_{1})\subseteq\left(\max\{-\lambda_2,0\},\,-\lambda_1\right)$, which implies that $\psi_1(\eta^*-\epsilon)>0=\psi_1(\eta^*)$. So $\tilde{\eta}:=\eta^*-\epsilon$ is just the desired.
	
	{\noindent\bf ``$\Longleftarrow$". } Assume that $\exists\,\tilde{\eta}\in\left(\max\{-\lambda_2,0\},\,-\lambda_1\right)$ such that $\psi_1(\tilde{\eta})>0$. Note from \eqref{eq:s3nablag-psi'} that $\psi_1(\eta)\longrightarrow-\infty$ as $\eta\longrightarrow (-\lambda_1)^-$. Thus, from the continuity of $\psi_1(\eta)$ on $\left[\tilde{\eta},\,-\lambda_1\right)$, there exists a $\eta^*\in\left(\tilde{\eta},\,-\lambda_1\right)$ such that $\psi_1(\eta^*)=0$. If one can verify that $\psi'_1(\eta^*)<0$, then by Theorem \ref{th:s3-main-th1} the point $y_1(\eta^*)$ is just a local-nonglobal minimizer of the problem \eqref{eq:s3M2} with $\mu^*={1}/{\eta^{*}}>0$, and the proof is completed. In fact, we suppose by contradiction that $\psi'_1(\eta^*)\geq0$. It, together with $\eta^*>0$, implies from Lemma \ref{th:psi-zerosnumber} (Remark \ref{rmk:s3-psi1-pos-zeros}) that \eqref{eq:s3psi'sign-3-3} holds and $\max\{-\lambda_2,0\}<\tilde{\eta}<\eta^*\leq\eta_0<-\lambda_1$,  which leads to $\psi'_1(\eta)>0$ $\forall\eta\in(\max\{-\lambda_2,0\},\,\eta_0)$  by using \eqref{eq:s3psi'sign-3-3}. So one has $0<\psi_1(\tilde{\eta})<0=\psi_1(\eta^*)$, which is a contradiction.
\end{proof}

Following the statement and proof of Lemma \ref{th:s4-psieta1>0}, one can also find a similar result about the problem \eqref{eq:s3M3}.

\begin{lemma}
	\label{th:s4-psi3-eta3-positive}
	Suppose that  $n\geq2$, $\max\{\lambda_1,0\}<\lambda_2$ and $r_1\ne0$.  Then the problem \eqref{eq:s3M3} has a local-nonglobal minimizer  with $\mu^{*}>0$ $\Longleftrightarrow$  $\exists\,\tilde{\eta}\in\left(\max\{\lambda_1,0\},\,\lambda_2\right)$ such that $\psi_3(\tilde{\eta})<0$, and if $\lambda_1<0$ there is $\psi_3(0)>0$, where $\psi_3(\eta)$ is defined in \eqref{eq:y3-psi3(eta)}. 
\end{lemma}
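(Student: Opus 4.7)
The plan is to mirror the proof of Lemma~\ref{th:s4-psieta1>0}, but to account for the fact that the left endpoint $\max\{\lambda_1,0\}$ of the interval of interest depends on the sign of $\lambda_1$, which is why the extra side condition ``$\psi_3(0)>0$ when $\lambda_1<0$'' appears. The two tools I will combine are Theorem~\ref{th:s3-main-th2} (a local-nonglobal minimizer of \eqref{eq:s3M3} with $\mu^*>0$ corresponds bijectively to an $\eta^*\in(\max\{\lambda_1,0\},\lambda_2)$ with $\psi_3(\eta^*)=0$ and $\psi_3'(\eta^*)<0$) and Lemma~\ref{th:psi3-zerosnumber} (three possible sign patterns for $\psi_3'$ on that interval), together with the boundary behavior of $\psi_3$ from \eqref{eq:s3-psi3-property-1}.

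For the forward direction, I would start from $\eta^*$ given by Theorem~\ref{th:s3-main-th2} and use continuity of $\psi_3'$ at $\eta^*$ to pick $\tilde{\eta}=\eta^*+\epsilon$ slightly to the right: $\psi_3$ is strictly decreasing on a neighborhood of $\eta^*$, hence $\psi_3(\tilde{\eta})<0=\psi_3(\eta^*)$, and $\tilde{\eta}\in(\max\{\lambda_1,0\},\lambda_2)$ for small $\epsilon$. When $\lambda_1<0$ the extra claim $\psi_3(0)>0$ follows from the formula $\psi_3'(\eta)=2\eta\,\phi_3(\eta)$ and the strict monotonicity of $\phi_3$ from \eqref{eq:s3-psi3-property-1}: $\psi_3'(\eta^*)<0$ with $\eta^*>0$ forces $\phi_3(\eta^*)<0$, hence $\phi_3<0$ on $(\lambda_1,\eta^*)$; this makes $\psi_3'>0$ on $(\lambda_1,0)$ and $\psi_3'<0$ on $(0,\eta^*)$, so $0$ is a strict local maximum of $\psi_3$ relative to $(\lambda_1,\eta^*)$, yielding $\psi_3(0)>\psi_3(\eta^*)=0$.

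For the reverse direction I would case-split on whether $\lambda_1\ge 0$ or $\lambda_1<0$. When $\lambda_1\ge 0$, property \eqref{eq:s3-psi3-property-1} gives $\psi_3(\eta)\to+\infty$ as $\eta\to\lambda_1^+$; combined with $\psi_3(\tilde{\eta})<0$ and Lemma~\ref{th:psi3-zerosnumber} (Case~(ii) is ruled out by the blow-up), the monotonicity structure of $\psi_3$ forces a zero $\eta^*$ with $\psi_3'(\eta^*)<0$ in either the decreasing-only case or to the left of the unique stationary point $\eta_0$. When $\lambda_1<0$ the interval of interest is $(0,\lambda_2)$ and $\psi_3(0)>0$ takes over the role of the $+\infty$ limit: again Case~(ii) is ruled out (an increasing $\psi_3$ on $(0,\lambda_2)$ cannot go from positive at $0$ to negative at $\tilde{\eta}$), and in the remaining sign patterns $\psi_3$ strictly decreases from $\psi_3(0)>0$ past zero, giving the desired $\eta^*\in(0,\tilde{\eta})$ or $\eta^*\in(0,\eta_0)$ with $\psi_3'(\eta^*)<0$. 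Theorem~\ref{th:s3-main-th2} then converts such $\eta^*$ into a local-nonglobal minimizer $y^*=y_3(\eta^*)$ with $\mu^*=1/\eta^*>0$.

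The main subtlety, not present in Lemma~\ref{th:s4-psieta1>0}, is the asymmetry caused by $\max\{\lambda_1,0\}$ cutting off the natural domain $(\lambda_1,\lambda_2)$ of $\psi_3$ when $\lambda_1<0$: one loses the free ``$+\infty$ blow-up at the left endpoint'' that drives existence of a crossing, and must replace it by the auxiliary hypothesis $\psi_3(0)>0$. Keeping careful track of which of the three sign patterns of Lemma~\ref{th:psi3-zerosnumber} are consistent with this hypothesis is the place where a careless argument would break down, and is the only nontrivial step; everything else is an application of the intermediate value theorem and the characterizations already proved.
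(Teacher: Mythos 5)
Your proposal is correct and takes essentially the approach the paper intends: the paper omits this proof, remarking only that it follows the statement and proof of Lemma \ref{th:s4-psieta1>0}, and you carry out exactly that analogue using Theorem \ref{th:s3-main-th2}, Lemma \ref{th:psi3-zerosnumber} and the boundary behavior in \eqref{eq:s3-psi3-property-1}. Your treatment of the $\lambda_1<0$ case, where $\psi_3(0)>0$ replaces the lost blow-up at the left endpoint (both in deriving it from $\psi_3'=2\eta\phi_3$ with $\phi_3$ increasing, and in using it to rule out the increasing sign pattern), is precisely the adaptation the modified statement requires.
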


Now we describe our algorithm. 

\begin{breakablealgorithm} 
	\caption{The main routine} \label{alg:main}
	\begin{algorithmic}[1]
		\Require $A\in\mathcal{S}^n$, $b\in\mathcal{R}^{n}$, $c\in\mathcal{R}$, a sign number ${sgn}$. 
		
		\Ensure ${\mu}_k^*\in\mathcal{R}$ and ${y}^*_{k}\in\mathcal{R}^n$, $k=1,2$.  
		
		\State Make an eigenvalue decomposition of $A$:  $A=V\Lambda V^{T}$, $\Lambda=\mbox{diag}(\lambda_{1},\lambda_{2},$ $\cdots,\lambda_{n})$ with $\lambda_1\leq\lambda_2\leq\cdots,\leq\lambda_{n}$ and $V=[v_1,v_2,\cdots,v_n]$ with $V^TV=I$. Put $r_{i}=v_{i}^{T}b$ for $i=1,2,\cdots,n$. If ${sgn}=-1$, go to Line 4.
		
		\State  Set $U_1=\max\{-\lambda_2,0\}$, $U_2=-\lambda_{1}$, $r=r_{1}$, $\psi(\eta)=\psi_1(\eta)$ and $\psi'(\eta)=\psi'_1(\eta)$.
		Call Subalgorithm \ref{alg:s4-2} and return $\eta^{*}$. If $\eta^{*}={\rm NaN}$, put ${\mu}^*_1={\rm NaN}$ and ${y}^*_1={\rm NaN}$. Otherwise,  put ${\mu}^*_1=1/\eta^*$ and ${y}^*_1=-\sum\limits_{i=1}^{n}\dfrac{r_{i}}{\lambda_{i}+\eta^*}v_i$.  
		
		\State Set $U_1=\max\{\lambda_{n-1},0\}$ and $U_2=\lambda_{n}$, $r=r_{n}$, $\psi(\eta)=\psi_2(\eta)$ and $\psi'(\eta)=\psi'_2(\eta)$.  Call Subalgorithm \ref{alg:s4-2} and return $\eta^{*}$. If $\eta^{*}={\rm NaN}$, put ${\mu}^*_2={\rm NaN}$ and ${y}^*_{2}={\rm NaN}$, go to Line 6. Otherwise, put ${\mu}^*_2=-1/\eta^*$ and ${y}^*_{2}=-\sum\limits_{i=1}^{n} \dfrac{r_{i}}{\lambda_{i}-\eta^*} v_i$, go to Line 6. 
		
		\State Set $U_1=\max\{\lambda_1,0\}$, $U_2=\lambda_{2}$, $r=r_{1}$, $\lambda=\lambda_1$, $\psi(\eta)=\psi_3(\eta)$ and $\psi'(\eta)=\psi'_3(\eta)$. Call Subalgorithm \ref{alg:s4-3} and return $\eta^{*}$. If $\eta^{*}={\rm NaN}$, put ${\mu}^*_1={\rm NaN}$ and ${y}^*_{1}={\rm NaN}$. Otherwise, put ${\mu}^*_1=1/\eta^*$, ${y}^*_{1}=-\sum\limits_{i=1}^{n} \dfrac{r_{i}}{\lambda_{i}-\eta^*}v_i$, ${\mu}^*_2={\rm NaN}$, ${y}^*_2={\rm NaN}$, and then go to Line 6.
		
		\State Set $U_1=\max\{-\lambda_n,0\}$, $U_2=-\lambda_{n-1}$, $r=r_{n}$, $\lambda=\lambda_n$, $\psi(\eta)=\psi_4(\eta)$ and $\psi'(\eta)=\psi'_4(\eta)$. Call Subalgorithm \ref{alg:s4-3} and return $\eta^{*}$. If $\eta^{*}={\rm NaN}$, put ${\mu}^*_2={\rm NaN}$ and ${y}^*_2={\rm NaN}$.  Otherwise, put ${\mu}^*_2=-1/\eta^*$, ${y}^*_2=-\sum\limits_{i=1}^{n} \dfrac{r_{i}}{\lambda_{i}+\eta^*}v_i$.   
		
		\State Output ${\mu}^*_1$ and ${y}^*_1$, ${\mu}^*_2$ and ${y}^*_2$, and then stop.
	\end{algorithmic}
\end{breakablealgorithm}

In Subalgorithm \ref{alg:s4-2}, bisection method is used on $( \max \{-\lambda_2,0\}, -\lambda_{1})$ and $\left(\max\{\lambda_{n-1},0\},\,\lambda_{n}\right)$. 

\begin{breakablealgorithm1}
	\caption{For the problem \eqref{eq:s3M2}}
	
	\label{alg:s4-2}
	
	\begin{algorithmic}[1]
		
		\Require $\psi(\eta)$, $\psi'(\eta)$, $U_1$, $U_2$, $r$ and a tolerance $\epsilon>0$.
		
		\Ensure  ${\eta}^*$.  
		
		\State Ask $U_1<U_2$ and $r\neq0$? No, put $\eta^*={\rm NaN}$, output $\eta^{*}$ and then stop. Yes, set $temp=0$.

		\State Set $\eta=\frac{U_1+U_2}{2}$, and compute $\psi(\eta)$. 
		
		\State If $\psi(\eta)>0$, update $temp=1$ and $U_1=\eta$, go to Line 6; else if $\psi(\eta)=0$, go to Line 4; else i.e. $\psi(\eta)<0$, go to Line 5.
		
		\State Compute $\psi'(\eta)$. If $\psi'(\eta)>0$, update $temp=1$ and $U_1=\eta$, go to Line 6; else if $\psi'(\eta)=0$, put $\eta^{*}={\rm NaN}$, output $\eta^{*}$ and then stop; else i.e. $\psi'(\eta)<0$, put $\eta^{*}=\eta$, output $\eta^{*}$ and then stop.
		
		\State Compute $\psi'(\eta)$. If $\psi'(\eta)>0$, update $U_1=\eta$, go to Line 6; else if $\psi'(\eta)=0$,  put $\eta^{*}={\rm NaN}$, output $\eta^{*}$ and then stop; else i.e. $\psi'(\eta)<0$, update $U_2=\eta$, go to Line 6.
		
		\State If $U_2-U_1> \epsilon$, then go to Line 2. Otherwise, if $temp=0$,  put $\eta^{*}={\rm NaN}$, output $\eta^{*}$ and then stop; else i.e. $temp=1$, put $\eta^{*}=U_2$, output $\eta^{*}$ and then stop.
	\end{algorithmic}
\end{breakablealgorithm1}

In Subalgorithm \ref{alg:s4-3}, bisection method is used on $\left( \max \{\lambda_1,0\}, \,\lambda_{2}\right)$ and $\left(\max\{-\lambda_{n},0\},\,-\lambda_{n-1}\right)$. 

\begin{breakablealgorithm1} 
	\caption{For the problem \eqref{eq:s3M3}} 
	
	\label{alg:s4-3}
	
	\begin{algorithmic}[1]
		\Require $\psi(\eta)$, $\psi'(\eta)$, $U_1$, $U_2$, $r$, $\lambda$ and a tolerance $\epsilon>0$.
		
		\Ensure $\eta^{*}$. 
		
		\State  Ask $U_1<U_2$ and $r\neq0$? No, put $\eta^*={\rm NaN}$, output $\eta^{*}$ and then stop. Yes, if $\lambda<0$ then ask $\psi(0)>0$? no,  put $\eta^*={\rm NaN}$, output $\eta^{*}$ and then stop; yes, set $temp=0$.
		
		\State Set $\eta=\frac{U_1+U_2}{2}$, and compute $\psi(\eta)$. 
		
		\State If $\psi(\eta)<0$, update $temp=1$ and $U_2=\eta$, go to Line 6; else if $\psi(\eta)=0$, then go to Line 4; else i.e. $\psi(\eta)>0$, go to Line 5.
		
		\State Compute $\psi'(\eta)$. If $\psi'(\eta)>0$, update $temp=1$ and $U_2=\eta$, go to Line 6; else if $\psi'(\eta)=0$, put $\eta^*={\rm NaN}$, output $\eta^{*}$ and then stop; else i.e. $\psi'(\eta)<0$, put $\eta^{*}=\eta$, output $\eta^{*}$ and then stop.
		
		\State Compute $\psi'(\eta)$. If $\psi'(\eta)>0$, update $U_2=\eta$, go to Line 6; else if $\psi'(\eta)=0$, put $\eta^*={\rm NaN}$, output $\eta^{*}$ and then stop; else i.e. $\psi'(\eta)<0$, update $U_1=\eta$, go to Line 6.
		
		\State If $U_2-U_1>\epsilon$, go to Line 2. Otherwise, if $temp=0$, put $\eta^*={\rm NaN}$, output $\eta^{*}$ and then stop; else i.e. $temp=1$, put $\eta^{*}=U_1$, output $\eta^{*}$ and then stop.
	\end{algorithmic}
\end{breakablealgorithm1}

Here we present preliminary numerical experiments to show the performance of our algorithms {\bf (due to limited space, we can only show some simple numerical results of the problem \eqref{eq:s3M2}, but the problem \eqref{eq:s3M3} has really similar numerical results)}. All the elements of the matrix $A$ and the vector $b$ are randomly generated on the interval $[-100,100]$. However, the constant $c$ is adaptively adjusted to ensure that $\psi_1(\eta)$ has zeros on the interval $(-\lambda_2, -\lambda_1)$.  The tolerance $\epsilon$ is always taken as $\epsilon=1e-5$.  

All the experiments were performed in Python 3.12 on a Laptop with an Intel Core i5-11320H (3.20GHz) processor and 16 GB of RAM. For each $n$,  we generate 1000 examples, and record the average values of some indices. Wherein ``Eig{\_}time(s)", ``Bis{\_}time(s)", ``Bis{\_}iter" and ``Num{\_}LNGM" denote the running time of the eigenvalue decomposition of $A$, the running time and the iteration number of  Subalgorithm \ref{alg:s4-2},  and the number of  local-nonglobal minimizers of the problem \eqref{eq:s3M2}.
Certainly, the average running time of Algorithm \ref{alg:main} is roughly equal to the sum of Eig{\_}time and Bis{\_}time. 

\begin{table}[htbp]
	\setlength{\tabcolsep}{15pt}
	\centering 
	\caption{Numerical results of the problem \eqref{eq:s3M2} with $n$ from $100$ to $10000$} \label{table:n=100:10000} 
	\begin{tabular}{ccccc}
		\toprule 		
		$n$ & Eig{\_}time(s) & Bis{\_}time(s) & Bis{\_}iter & Num{\_}LNGM  \\
		
		\midrule
		100 & 4.24e-3 & 4.66e-4 &45.03& 1.000\\
		
		200 & 2.05e-2 & 6.21e-4 & 44.91&1.000\\
		
		300 & 4.56e-2 & 8.81e-4 & 44.79 &1.000\\
		
		400 & 8.18e-2 & 7.45e-4 & 44.38 &1.000\\
		
		500 & 0.13 & 8.81e-4 & 44.48 & 1.000\\
		
		600 & 0.21 & 1.27e-3 & 44.15 & 1.000\\
		
		700 & 0.27 & 1.55e-3 & 44.31 & 1.000 \\
		
		800 & 0.42 & 1.42e-3 & 44.01 & 1.000\\
		
		900 & 0.51 & 1.47e-3 & 44.03 & 1.000\\
		
		1000 & 0.58 & 1.35e-3 & 44.16 & 1.000\\
		
		2000 & 4.19 & 3.57e-3 & 43.85 & 1.000\\
		
		3000 & 9.84 & 3.25e-3 & 43.34 & 1.000\\
		
		4000 & 20.76 & 4.06e-3 & 43.37 & 1.000\\
		
		5000 & 35.54 & 4.65e-3 & 43.23 & 1.000\\
		
		6000 & 65.29 & 5.51e-3 & 43.14 & 1.000\\
		
		7000 & 99.41 & 6.33e-3 & 43.30 & 1.000\\
		
		8000 & 143.12 & 7.18e-3 & 43.16 &1.000\\
		
		9000 & 345.09 & 1.04e-2 & 43.06& 1.000\\
		
		10000 & 422.81 & 1.07e-2 & 42.83 & 1.000\\ 
		 \bottomrule
	\end{tabular}
\end{table}

In Table \ref{table:n=100:10000}, one can easily observe that, the running time of Algorithm \ref{alg:main}  comes mainly from Eig{\_}time, and  Subalgorithm \ref{alg:s4-2} is insensitive to the dimension $n$. In fact, the computational cost of Subalgorithm \ref{alg:s4-2} comes mainly from evaluating $\psi(\eta)$ and $\psi'(\eta)$, which is equal to $O(n)$. 

\section*{Declarations}
{\bf Conflict of interest } The authors declare that they have no conflict of interest.

\end{document}